\documentclass[12pt]{amsart}
\usepackage{amscd}
\usepackage{amsfonts,amssymb,amsmath,amsthm}
\usepackage{url}
\usepackage{color}
\usepackage{enumerate}
\usepackage{bm}
\usepackage{geometry}
\usepackage[all]{xy}
\usepackage[new]{old-arrows}

\urlstyle{sf}
\newtheorem{thm}{Theorem}[section]
\newtheorem{cor}[thm]{Corollary}
\newtheorem{lem}[thm]{Lemma}
\newtheorem{prop}[thm]{Proposition}

\theoremstyle{definition}
\newtheorem{defin}[thm]{Definition}
\newtheorem{rem}[thm]{Remark}

\numberwithin{equation}{section}

\makeatletter
\@namedef{subjclassname@2020}{%
  \textup{2020} Mathematics Subject Classification}
\makeatother

\author[Y. Shiba]{Yuta Shiba}
\address{
Department of Mathematics, 
Graduate School of Science, Tokyo University of Science\\
1-3 Kagurazaka, Shinjuku-ku, Tokyo, 162-8601, Japan}
\email{1120507@ed.tus.ac.jp}
\author[K. Sanada]{Katsunori Sanada}
\address{
Department of Mathematics, 
Faculty of Science, Tokyo University of Science\\
1-3 Kagurazaka, Shinjuku-ku, Tokyo, 162-8601, Japan}
\email{sanada@rs.tus.ac.jp}
\author[A. Itaba]{Ayako Itaba}
\address{
Institute of Arts and Sciences, 
Tokyo University of Science\\
6-3-1 Niijuku, Katsushika-ku, Tokyo, 125-8585, Japan}
\email{itaba@rs.tus.ac.jp}

\keywords{Hopf algebras,
				symmetric cohomology,
				symmetric Hochschild cohomology
				}
\subjclass[2020]{16T05, 16E40.}

\newcommand{\map}[3]{{#1}\colon {#2} \longrightarrow {#3}}
\newcommand{\Hom}[3]{\mathrm{Hom}_{#1} ({#2},{#3})}

\newcommand{\coho}[2]{\mathrm{H}^{#1}({#2})}
\newcommand{\hoch}[3]{\mathrm{HH}^{#1}({#2},{#3})}
\newcommand{\shoch}[3]{\mathrm{HHS}^{#1}({#2},{#3})}
\newcommand{\hcoho}[3]{\mathrm{H}^{#1}({#2},{#3})}
\newcommand{\shcoho}[3]{\mathrm{HS}^{#1}({#2},{#3})}
\newcommand{\res}[4]{\mathrm{\mathrm{#1}}_{#2}^{\mathrm{#3}}({#4})}
\newcommand{\sym}[3]{\mathrm{\widetilde{S}}_{#1}^{\mathrm{#2}}({#3})}
\newcommand{\cpx}[5]{\mathrm{#1}_{\mathrm{#2}}^{#3}({#4},{#5})}
\newcommand{\diff}[2]{d_{#1}^{\mathrm{#2}}}
\newcommand{\dif}[2]{\delta_{\mathrm{#1}}^{#2}}

\newcommand{\ch}[1]{\mathrm{ch}({#1})}
\newcommand{\orb}[2]{\mathrm{Orb}_{#1}(#2)}
\newcommand{\ext}[4]{\mathrm{Ext}_{#1}^{#2}({#3},{#4})}

\newcommand{\ad}{\mathrm{ad}}
\newcommand{\tw}{\mathrm{tw}}
\newcommand{\id}[1]{\mathrm{id}_{#1}}
\newcommand{\op}{\mathrm{op}}
\newcommand{\e}{\mathrm{e}}
\newcommand{\Z}{\mathbb{Z}}

\setlength{\topmargin}{2mm}
\setlength{\textwidth}{164mm}
\setlength{\textheight}{220mm}
\setlength{\oddsidemargin}{-2mm}
\setlength{\evensidemargin}{-2mm}


\begin{document}

\title[Symmetric cohomology and symmetric Hochschild cohomology]
{Symmetric cohomology and symmetric Hochschild cohomology 
of cocommutative Hopf algebras}

\begin{abstract}
Staic defined symmetric cohomology of groups and studied 
that the secondary symmetric cohomology group is corresponding 
to group extensions and the injectivity of the canonical map 
from symmetric cohomology to classical cohomology. 
In this paper, we define symmetric cohomology 
and symmetric Hochschild cohomology for cocommutative Hopf algebras. 
The first one is a generalization of symmetric cohomology of groups. 
We give an isomorphism between symmetric cohomology 
and symmetric Hochschild cohomology, 
which is a symmetric version of the classical result 
about cohomology of groups by Eilenberg-MacLane 
and cohomology of Hopf algebras 
by Ginzburg-Kumar.  
Moreover, to consider the condition that symmetric cohomology coincides 
with classical cohomology, 
we investigate the projectivity of a resolution which gives symmetric cohomology.
\end{abstract}

\maketitle


\section{Introduction}
Let $G$ be a group and $X$ a $G$-module. 
Motivated by topological geometry, 
Staic \cite{staic20093} defined symmetric cohomology of groups 
by constructing an action of the symmetric group $S_{\bullet +1}$ 
on the standard resolution $\cpx{C}{}{\bullet}{G}{X}$ 
which gives the group cohomology $\hcoho{\bullet}{G}{X}$. 
That is, by taking cohomology of the subcomplex 
$\cpx{CS}{}{\bullet}{G}{X} = \cpx{C}{}{\bullet}{G}{X}^{S_{\bullet+1}}$ 
of $\cpx{C}{}{\bullet}{G}{X}$ fixed by $S_{\bullet +1}$, 
Staic defined the symmetric cohomology 
$\shcoho{\bullet}{G}{X} = \coho{\bullet}{\cpx{CS}{}{\ast}{G}{X}}$. 
In the same paper, for a topological space $U$ 
and the $i$-th homotopy group $\pi_i(U)$, 
Staic proved that, if $\pi_1(U)$ has no elements of order $2$ and $3$, 
then the image of $\alpha \in \shcoho{3}{\pi_1(U)}{\pi_2(U)}$ 
in $\hcoho{3}{\pi_1(U)}{\pi_2(U)}$ 
by the canonical map is Postnikov invariant. 
In \cite{staic2009symmetric}, it was proved 
that the secondary cohomology group $\shcoho{2}{G}{X}$ 
is corresponding to extensions of groups 
which satisfies some conditions. 
Moreover, Staic studied the injectivity of the canonical map 
$\shcoho{\bullet}{G}{X} \longrightarrow \hcoho{\bullet}{G}{X}$ 
induced by the inclusion 
$\cpx{CS}{}{\bullet}{G}{X} \longhookrightarrow \cpx{C}{}{\bullet}{G}{X}$. 
Singh \cite{singh2013symmetric} defined the symmetric continuous 
cohomology of topological groups and the symmetric smooth cohomology 
of Lie groups. 
Recently, Coconet-Todea \cite{coconet2021symmetric} defined 
the symmetric Hochschild cohomology of twisted group algebras 
which is a generalization of group algebras. 
Our aim of this paper is to study about the symmetric cohomology 
and the symmetric Hochschild cohomology 
for cocommutative Hopf algebras as another generalization of group algebras. 

This paper is organized as follows: 
In Section 2, we recall some properties of Hopf algebras 
and the definition of symmetric cohomology of groups. 
Let $k$ be a field and $A$ a cocommutative Hopf algebra over $k$. 
In Section 3, we define the symmetric cohomology 
$\shcoho{\bullet}{A}{M}$ of $A$ with coefficients 
in any left $A$-module $M$ by constructing an action 
of the symmetric group $S_{\bullet+1}$ on the standard non-homogeneous 
complex $\cpx{C}{}{\bullet}{A}{M}$ 
which gives the Hopf algebra cohomology $\hcoho{\bullet}{A}{M}$. 
That is, this action gives the fixed subcomplex 
$\cpx{CS}{}{\bullet}{A}{M} = \cpx{C}{}{\bullet}{A}{M}^{S_{\bullet +1}}$ 
of $\cpx{C}{}{\bullet}{A}{M}$, and this defines the symmetric cohomology 
$\shcoho{\bullet}{A}{M} = \coho{\bullet}{\cpx{CS}{}{\ast}{A}{M}}$. 
Similarly, we define the symmetric Hochschild cohomology 
$\shoch{\bullet}{A}{M}$ of $A$ with coefficients 
in any $A$-bimodule $M$ by constructing an action 
of the symmetric group $S_{\bullet+1}$ on the standard non-homogeneous 
complex $\cpx{C}{e}{\bullet}{A}{M}$ 
which gives the Hochschild cohomology $\hoch{\bullet}{A}{M}$. 
This action gives the fixed subcomplex 
$\cpx{CS}{e}{\bullet}{A}{M} = \cpx{C}{e}{\bullet}{A}{M}^{S_{\bullet +1}}$ 
of $\cpx{C}{e}{\bullet}{A}{M}$. 
From this, we define the symmetric cohomology 
$\shoch{\bullet}{A}{M} = \coho{\bullet}{\cpx{CS}{e}{\ast}{A}{M}}$. 
In Section 4, first, we consider the resolution of $k$ 
which gives the symmetric cohomology and the resolution of $A$ 
which gives the symmetric Hochschild cohomology. 
Eilenberg and MacLane proved an isomorphism 
between the group cohomology and the Hochschild cohomology 
of group algebras (\cite{eilenberg1947cohomology}, see Theorem \ref{thm.iso-grp-coho}). 
Moreover, its isomorphism was generalized to the case of Hopf algebras 
by Ginzburg and Kumar (\cite{ginzburg1993cohomology}, see Remark \ref{rem.iso-hopf-coho}). 
In Theorem \ref{thm.iso-symcoho-symHoch}, 
we obtain an isomorphism, which is a symmetric version 
of these isomorphisms, 
between the symmetric cohomology $\shcoho{n}{A}{{}^\ad M}$ 
and the symmetric Hochschild cohomology $\shoch{n}{A}{M}$ 
for any $A$-bimodule $M$ and $n \geq 0$, 
where ${}^\ad M$ is a left $A$-module via the left adjoint action. 
Also, similar to the case of symmetric cohomology of groups, 
there is an isomorphism of $k$-vector spaces 
$\shcoho{1}{A}{M} \cong \hcoho{1}{A}{M}$ 
and the canonical map $\shcoho{2}{A}{M} \longrightarrow \hcoho{2}{A}{M}$ 
induced by the inclusion 
$\cpx{CS}{}{\bullet}{A}{M} \longhookrightarrow \cpx{C}{}{\bullet}{A}{M}$ 
is injective. 
Moreover, we obtain a result for the projectivity of the resolution 
of $k$ above in Theorem \ref{thm.res-proj}. 
Finally, we calculate the resolution which gives symmetric cohomology 
of group algebras of cyclic groups of odd prime order.

Throughout the paper, let $k$ be a field, 
and we write $\otimes$ for $\otimes_k$. 
\section{Preliminaries}

In this section, we describe some properties of Hopf algebras 
and the definition of symmetric cohomology of groups.
\subsection{Properties of Hopf algebras}
Let $A$ be a Hopf algebra 
with a coproduct $\map{\Delta}{A}{A \otimes A}$, 
a counit $\map{\varepsilon}{A}{k}$, 
and an antipode $\map{S}{A}{A}$. 
We say that $A$ is \textit{cocommutative} 
if $ \tw \circ \Delta = \Delta$, 
where $\map{\tw}{A \otimes A}{A \otimes A}$ is the morphism given 
by $\tw(a \otimes b) = b \otimes a$ for any $a,b \in A$. 
We will use some standard notation for the coproduct, 
so called \textit{Sweedler notation}; we write
		$\Delta(a) = \sum a^{(1)} \otimes a^{(2)}$,
	where the notation $a^{(1)},a^{(2)}$ for tensor factors is symbolic. 
	Throughout the paper, we omit the summation symbol $\sum$ 
	of Sweedler notation when no confusion occurs. 
	Next, we recall some properties of Hopf algebras. 
\begin{prop}[{\cite[Proposition 4.0.1]{swedler1969hopf}}]
	Let $A$ be a Hopf algebra. Then the followings hold.
	\begin{enumerate}[{\rm (1)}]
		\item $S(ab) = S(b)S(a)$ for any $a,b \in A$,
		\item $S(1_A) = 1_A$,
		\item $\varepsilon \circ S = \varepsilon$,
		\item $(S \otimes S) \circ \tw \circ \Delta = \Delta \circ S$,
		\item If $A$ is commutative or cocommutative, 
		then $S^2 = \id{A}$ holds.
	\end{enumerate}	
\end{prop}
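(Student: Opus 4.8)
The plan is to route everything through the convolution algebra. Recall that for a coalgebra $C$ and an algebra $B$ the space $\Hom{k}{C}{B}$ is an associative unital $k$-algebra under the convolution product $(f*g)(c)=\sum f(c^{(1)})g(c^{(2)})$, with unit $u_B\circ\varepsilon_C$ where $\map{u_B}{k}{B}$ is the unit of $B$; and that in any monoid a left inverse and a right inverse of the same element must coincide. The defining property of the antipode is precisely that $S$ is the two-sided convolution inverse of $\id{A}$ in $\Hom{k}{A}{A}$, i.e. $\sum S(a^{(1)})a^{(2)}=\sum a^{(1)}S(a^{(2)})=\varepsilon(a)1_A$. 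Each of the five items will be obtained by recognizing two maps as one-sided inverses of a third.

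For (1), equip $A\otimes A$ with the tensor-product coalgebra structure, keep $A$ as an algebra, and let $\map{m}{A\otimes A}{A}$ be the multiplication. I would show that $\nu:=S\circ m$ is a left convolution inverse of $m$ in $\Hom{k}{A\otimes A}{A}$ and that $\rho:=m\circ(S\otimes S)\circ\tw$ is a right convolution inverse of $m$: the former holds because $\Delta$ is an algebra map, so $\sum S(a^{(1)}b^{(1)})a^{(2)}b^{(2)}=\sum S((ab)^{(1)})(ab)^{(2)}=\varepsilon(ab)1_A$; the latter by applying $\sum b^{(1)}S(b^{(2)})=\varepsilon(b)1_A$ to the inner tensor factor and then $\sum a^{(1)}S(a^{(2)})=\varepsilon(a)1_A$. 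Then $\nu=\nu*(m*\rho)=(\nu*m)*\rho=\rho$, which is exactly $S(ab)=S(b)S(a)$. Item (2) is immediate by evaluating $\sum S(a^{(1)})a^{(2)}=\varepsilon(a)1_A$ at $a=1_A$ and using $\Delta(1_A)=1_A\otimes 1_A$ together with $\varepsilon(1_A)=1_k$; item (3) follows by applying the algebra map $\varepsilon$ to that same identity and using the counit axiom together with the linearity of $\varepsilon\circ S$.

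For (4) I would work in $\Hom{k}{A}{A\otimes A}$, with $A$ the coalgebra and $A\otimes A$ the tensor-product algebra, whose convolution unit sends $a$ to $\varepsilon(a)(1_A\otimes 1_A)$. The goal is to show that $\Delta\circ S$ is a right convolution inverse of $\Delta$ and $(S\otimes S)\circ\tw\circ\Delta$ a left one. The right-hand identity $\Delta*(\Delta\circ S)=u_{A\otimes A}\circ\varepsilon_A$ is quick, again because $\Delta$ is an algebra map and $\sum a^{(1)}S(a^{(2)})=\varepsilon(a)1_A$. The left-hand identity is the step I expect to be the main obstacle: it requires expanding $\sum a^{(1)}\otimes a^{(2)}\otimes a^{(3)}\otimes a^{(4)}$, regrouping by coassociativity so that $\sum S(a^{(2)})a^{(3)}=\varepsilon(a^{(2)})1_A$ can be applied to the inner pair, and then collapsing the two remaining factors by the counit axiom — entirely routine, but the Sweedler-index bookkeeping is where an error is most likely. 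Uniqueness of convolution inverses then yields $(S\otimes S)\circ\tw\circ\Delta=\Delta\circ S$.

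Finally, for (5), observe that $\id{A}$ is the convolution inverse of $S$ in $\Hom{k}{A}{A}$, so it suffices to exhibit a one-sided convolution inverse of $S$ equal to $S^2$. By (1) one has $\sum S^2(a^{(1)})S(a^{(2)})=\sum S\bigl(a^{(2)}S(a^{(1)})\bigr)$, and under either hypothesis $\sum a^{(2)}S(a^{(1)})=\varepsilon(a)1_A$ (in the commutative case by swapping the two factors of the product, in the cocommutative case by swapping the two tensor factors of $\Delta$, each time reducing to one of the antipode axioms $\sum S(a^{(1)})a^{(2)}=\sum a^{(1)}S(a^{(2)})=\varepsilon(a)1_A$); combined with (2) this gives $S^2*S=u_A\circ\varepsilon_A=\id{A}*S$, and cancelling the convolution-invertible $S$ yields $S^2=\id{A}$. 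Apart from the indexing in (4), all the computations are straightforward manipulations of Sweedler notation.
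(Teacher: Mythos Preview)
Your proof is correct and follows the classical convolution-algebra argument. Note, however, that the paper does not supply its own proof of this proposition: it is stated with a citation to \cite{swedler1969hopf} and left unproved, as these are standard background facts about Hopf algebras rather than results of the paper. Your argument is essentially the one found in Sweedler's book, so there is nothing to compare against beyond confirming that your route matches the cited source.
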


For left $A$-modules $M$ and $N$, 
left $A$-module structures on $M \otimes N$ and $\Hom{k}{M}{N}$ 
are defined as follows referring 
to Witherspoon \cite{witherspoon2019hochschild}.

\begin{defin}[cf. {\cite[Section 9.2]{witherspoon2019hochschild}}]
	Let $A$ be a Hopf algebra, and $M$ and $N$ left $A$-modules.
	\begin{enumerate}[{\rm (1)}]
		\item The $k$-vector space $M \otimes N$ is a left $A$-module via 
					$a \cdot ( m \otimes n) = a^{(1)}m \otimes a^{(2)}n$
				for any $a \in A$, $m \in M$ and $n \in N$.
		\item The $k$-vector space $\Hom{k}{M}{N}$ is a left $A$-module via
					$(a \cdot f)(m) = a^{(1)} f(S(a^{(2)}) m)$
				for any $a \in A$, $m \in M$ and $f \in \Hom{k}{M}{N}$. 
				In particular, if $N = k$, then $\Hom{k}{M}{k}$ 
				has a left $A$-module structure given by
					$(a \cdot f)(m) = f(S(a) m)$
				for any $a \in A$ and $m \in M$, where $k$ is a trivial $A$-module via
					$a \cdot x = \varepsilon (a)x$
				for any $a \in A$ and $x \in k$.
		\item Let ${}^A M$ denote the $A$-submodule of $M$ given by
				$$
					{}^A M = \{ m \in M \mid a \cdot m = \varepsilon(a)m, 
					\text{for any } a\in A \},
				$$
				which is called \textit{the submodule of $A$-invariants of $M$}. 
				Similarly, we denote the submodule of $A$-invariants of $M$ 
				by $M^A$ when $M$ is a right $A$-module.
		\item Let $M$ be an $A$-bimodule. 
		The left action on $M$ is defined by
					$a \cdot m = a^{(1)} m S(a^{(2)})$
				for any $a \in A$ and $m \in M$, 
				which is called \textit{the left adjoint action}. 
				Also, let ${}^{\ad}M$ denote by a left $A$-module $M$ 
				with the left adjoint action.
	\end{enumerate}
\end{defin}
\begin{lem}[cf. {\cite[Lemma 9.2.2]{witherspoon2019hochschild}}]
\label{lem.hopf-hom-invariant}
	Let $A$ be a Hopf algebra, and $M$ and $N$ left $A$-modules. 
	Then there is an isomorphism 
	$\Hom{A}{M}{N} \cong {}^A(\Hom{k}{M}{N})$ as $k$-vector spaces.
\end{lem}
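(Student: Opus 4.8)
The plan is to exhibit an explicit inverse pair of $k$-linear maps between $\Hom{A}{M}{N}$ and ${}^A(\Hom{k}{M}{N})$ and check that both compositions are the identity. First I would observe that $\Hom{A}{M}{N}$ is by definition a $k$-subspace of $\Hom{k}{M}{N}$, namely the subspace of $A$-linear maps, so the natural candidate for the isomorphism is simply the inclusion, and the content of the lemma is that an element $f \in \Hom{k}{M}{N}$ is $A$-linear if and only if it lies in ${}^A(\Hom{k}{M}{N})$, i.e. if and only if $a \cdot f = \varepsilon(a) f$ for all $a \in A$, where $a \cdot f$ is the left $A$-action on $\Hom{k}{M}{N}$ given by $(a \cdot f)(m) = a^{(1)} f(S(a^{(2)}) m)$.

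The key computation is the equivalence of the two conditions. For the forward direction, suppose $f$ is $A$-linear. Then for any $a \in A$ and $m \in M$,
\[
(a \cdot f)(m) = a^{(1)} f(S(a^{(2)}) m) = a^{(1)} S(a^{(2)}) f(m) = \varepsilon(a) f(m),
\]
using $A$-linearity of $f$ in the middle step and the antipode axiom $a^{(1)} S(a^{(2)}) = \varepsilon(a) 1_A$ at the end; hence $a \cdot f = \varepsilon(a) f$, so $f \in {}^A(\Hom{k}{M}{N})$. For the converse, suppose $a \cdot f = \varepsilon(a) f$ for all $a$. Then for any $a \in A$ and $m \in M$, I would compute $a f(m)$ by inserting $a^{(1)} \varepsilon(a^{(2)})$ in place of $a$ (counit axiom) and then rewriting $\varepsilon(a^{(2)}) = $ something involving the antipode so that the hypothesis applies; concretely,
\[
a f(m) = a^{(1)} \varepsilon(a^{(2)}) f(m) = a^{(1)} (a^{(2)} \cdot f)(S^{-1}(a^{(3)})\,?\,) \cdots
\]
— the cleaner route is: start from $f(am)$ and use the hypothesis directly. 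Since $a^{(1)} \cdot f = \varepsilon(a^{(1)}) f$ holds, applying both sides to $S(a^{(2)})\cdots$ is awkward; instead, the standard manipulation is
\[
a f(m) = a^{(1)} \varepsilon(a^{(2)}) f(m) = a^{(1)} \big(S(a^{(2)}) a^{(3)}\big) \cdot\!\!\!?
\]
so let me record the argument I actually intend to write: using $(a^{(1)} \cdot f)(a^{(2)} m) = \varepsilon(a^{(1)}) f(a^{(2)} m) = f(am)$ on one hand, and on the other hand expanding $(a^{(1)} \cdot f)(a^{(2)} m) = a^{(1)} f(S(a^{(2)}) a^{(3)} m) = a^{(1)} f(\varepsilon(a^{(2)}) m) = a f(m)$ via coassociativity and the antipode axiom; equating the two yields $f(am) = a f(m)$, i.e. $f$ is $A$-linear.

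Finally I would note that both conditions describe the same subspace of $\Hom{k}{M}{N}$, so the identity map of $\Hom{k}{M}{N}$ restricts to a $k$-linear bijection $\Hom{A}{M}{N} \to {}^A(\Hom{k}{M}{N})$; there is nothing further to check about inverses. The main obstacle is purely bookkeeping: getting the Sweedler indices and the placement of $S$ consistent with the module-structure conventions fixed in the preceding definition, and invoking the correct form of the antipode axiom ($a^{(1)} S(a^{(2)}) = \varepsilon(a) 1_A = S(a^{(1)}) a^{(2)}$) at the right spot. Cocommutativity is not needed here, which matches the phrasing "Let $A$ be a Hopf algebra"; I would make sure the write-up does not silently use it.
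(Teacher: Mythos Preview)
Your argument is correct: the forward direction is immediate from the antipode axiom, and your final converse computation---evaluating $(a^{(1)}\cdot f)(a^{(2)}m)$ two ways using the invariance hypothesis and the definition of the action---is exactly the standard route (and is what appears in Witherspoon's Lemma~9.2.2, to which the paper defers without giving its own proof). Just strip out the false starts in the converse when you write it up; only the final paragraph beginning ``using $(a^{(1)}\cdot f)(a^{(2)}m) = \varepsilon(a^{(1)})f(a^{(2)}m)$'' is needed.
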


We recall the relationships among $A$-modules 
that are obtained by taking the tensor product 
and the set of homomorphisms.

\begin{lem}[cf. {\cite[Lemma 9.2.5]{witherspoon2019hochschild}}]
\label{lem.hopf-adjunction}
	Let $A$ be a Hopf algebra, 
	and $L,M$ and $N$ left $A$-modules. 
	Then there is a natural isomorphism 
	$\Hom{k}{L \otimes M}{N} \cong \Hom{k}{L}{\Hom{k}{M}{N}}$ 
	as left $A$-modules,
	and a natural isomorphism 
	$\Hom{A}{L \otimes M}{N} \cong \Hom{A}{L}{\Hom{k}{M}{N}}$ 
	as $k$-vector spaces.
\end{lem}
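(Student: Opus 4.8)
The plan is to construct the two adjunction isomorphisms explicitly at the level of $k$-vector spaces using the standard currying map, and then verify compatibility with the relevant $A$-actions in each case. First I would define the $k$-linear map
\[
\Phi \colon \Hom{k}{L \otimes M}{N} \longrightarrow \Hom{k}{L}{\Hom{k}{M}{N}}, \qquad
\Phi(f)(l)(m) = f(l \otimes m),
\]
together with its inverse $\Psi(g)(l \otimes m) = g(l)(m)$; that these are mutually inverse $k$-linear bijections is the ordinary tensor–hom adjunction over the field $k$ and requires no Hopf structure. The substance is to check that $\Phi$ is a morphism of left $A$-modules when the target carries the iterated $\Hom{k}{-}{-}$ action from the Definition above and the source carries the $\Hom{k}{L\otimes M}{N}$ action (with $L \otimes M$ given the diagonal module structure). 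This amounts to expanding both sides of $\Phi(a\cdot f) = a\cdot \Phi(f)$ using the formulas $(a\cdot f)(x) = a^{(1)} f(S(a^{(2)})x)$, the diagonal action $a\cdot(l\otimes m) = a^{(1)}l \otimes a^{(2)}m$, and coassociativity of $\Delta$, so that the nested Sweedler indices on the two sides match after reindexing. I expect this to be a direct but slightly delicate Sweedler-notation computation, and it is the main obstacle: one must track three or four tensor legs of $\Delta$ and invoke the antipode axiom $S(a^{(1)})a^{(2)} = \varepsilon(a)1_A$ (or its partner) at the right moment; cocommutativity of $A$ is not needed here, only the Hopf axioms, matching the ``cf.'' reference to Witherspoon.

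For the second isomorphism, I would \emph{not} redo the computation from scratch. Instead I would restrict the already-established $A$-module isomorphism $\Phi$ to $A$-invariants: by Lemma \ref{lem.hopf-hom-invariant}, $\Hom{A}{L \otimes M}{N} \cong {}^A(\Hom{k}{L\otimes M}{N})$ and $\Hom{A}{L}{\Hom{k}{M}{N}} \cong {}^A(\Hom{k}{L}{\Hom{k}{M}{N}})$ as $k$-vector spaces, and an isomorphism of $A$-modules carries invariants to invariants. Hence $\Phi$ induces a $k$-linear isomorphism on the invariant subspaces, which is exactly the desired $\Hom{A}{L \otimes M}{N} \cong \Hom{A}{L}{\Hom{k}{M}{N}}$. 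Naturality in all three variables follows because $\Phi$ is visibly natural (it is just ``uncurrying'', which commutes with precomposition and postcomposition by any $k$-linear maps, in particular by $A$-module maps), so naturality is inherited by the restriction to invariants.

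The only real work, then, is the action-compatibility check for $\Phi$; everything else is formal bookkeeping via the previous two lemmas. If I wanted to streamline, I could alternatively verify compatibility for the inverse $\Psi$ instead of $\Phi$ — whichever direction makes the antipode bookkeeping cleaner — since once either map is shown to be $A$-linear and it is already known to be bijective, its inverse is automatically $A$-linear as well.
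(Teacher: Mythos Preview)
Your proposal is correct. Note that the paper does not actually supply a proof of this lemma: it is stated with a ``cf.'' citation to Witherspoon and no \texttt{proof} environment follows, so there is nothing in the paper to compare against beyond the reference. Your approach---establishing the $k$-linear currying isomorphism $\Phi$, verifying $A$-equivariance by a Sweedler computation (using $\Delta\circ S = (S\otimes S)\circ\mathrm{tw}\circ\Delta$ and coassociativity, no cocommutativity), and then deducing the $\Hom_A$ statement by passing to $A$-invariants via Lemma~\ref{lem.hopf-hom-invariant}---is exactly the standard argument one finds in the cited source, and your plan to derive the second isomorphism from the first rather than repeating the computation is the clean way to do it.
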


\begin{lem}[cf. {\cite[Lemma 9.2.7]{witherspoon2019hochschild}}]
\label{lem.hopf-hom-dual}
	Let $A$ be a Hopf algebra, 
	and $L$ and $M$ left $A$-modules. 
	If $L$ is finite dimensional as a $k$-vector space, 
	then there is a natural isomorphism 
	$\Hom{k}{L}{M} \cong M \otimes \Hom{k}{L}{k}$ as left $A$-modules.
\end{lem}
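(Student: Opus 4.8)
The plan is to write down the canonical candidate isomorphism and then check that it is compatible with the $A$-module structures introduced in the Definition above. Define
$\map{\Phi}{M \otimes \Hom{k}{L}{k}}{\Hom{k}{L}{M}}$
on simple tensors by $\Phi(m \otimes \phi) = (l \mapsto \phi(l)m)$ and extend $k$-linearly. This map makes sense for any left $A$-module $L$ and is always injective; when $L$ is finite dimensional it is a $k$-linear isomorphism by elementary linear algebra. Concretely, fixing a $k$-basis $l_1, \dots, l_n$ of $L$ with dual basis $l_1^\ast, \dots, l_n^\ast$ of $\Hom{k}{L}{k}$, the assignment $f \mapsto \sum_{i=1}^n f(l_i) \otimes l_i^\ast$ is a two-sided inverse of $\Phi$. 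This is the only point at which finite dimensionality enters.

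Next I would verify that $\Phi$ is a homomorphism of left $A$-modules. Recall that the action on $M \otimes \Hom{k}{L}{k}$ is $a \cdot (m \otimes \phi) = a^{(1)} m \otimes a^{(2)} \phi$, where $(a^{(2)} \phi)(l) = \phi(S(a^{(2)})l)$, and that the action on $\Hom{k}{L}{M}$ is $(a \cdot f)(l) = a^{(1)} f(S(a^{(2)})l)$. Evaluating $\Phi(a \cdot (m \otimes \phi))$ at $l \in L$ gives $(a^{(2)}\phi)(l)\, a^{(1)} m = \phi(S(a^{(2)})l)\, a^{(1)}m$, while $(a \cdot \Phi(m \otimes \phi))(l) = a^{(1)}\, \Phi(m \otimes \phi)(S(a^{(2)})l) = a^{(1)} \phi(S(a^{(2)})l)\, m = \phi(S(a^{(2)})l)\, a^{(1)} m$, the last equality because $\phi(S(a^{(2)})l)$ is a scalar. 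The two expressions coincide, so $\Phi$ is $A$-linear, and being a $k$-linear isomorphism it is an isomorphism of left $A$-modules.

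Finally, for naturality, given a morphism $\map{g}{M}{M'}$ of left $A$-modules one checks directly that $\Phi_{M'} \circ (g \otimes \id{\Hom{k}{L}{k}})$ and $\Hom{k}{L}{g} \circ \Phi_M$ both send $m \otimes \phi$ to $l \mapsto \phi(l) g(m)$, so the relevant square commutes; naturality in $L$ follows in the same way from functoriality of $\Hom{k}{-}{k}$. I do not expect a genuine obstacle here: the substance of the argument is the bookkeeping of Sweedler notation in the $A$-linearity check of the middle paragraph. The one point worth flagging is that the statement does not assume cocommutativity, so one must keep the tensor factors in the order $M \otimes \Hom{k}{L}{k}$ throughout and resist the temptation to pass through the flip $M \otimes N \cong N \otimes M$, which is not available in this generality.
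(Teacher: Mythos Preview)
The paper does not supply its own proof of this lemma; it merely records the statement with a citation to \cite[Lemma 9.2.7]{witherspoon2019hochschild}. Your argument is correct and is precisely the standard one: the canonical evaluation map $m\otimes\phi\mapsto(l\mapsto\phi(l)m)$ is the usual $k$-linear isomorphism for finite-dimensional $L$, and your Sweedler computation showing $A$-linearity is exactly the verification one expects, as is the naturality check. Your closing remark about keeping the tensor factors in the order $M\otimes\Hom{k}{L}{k}$ is apt, since without cocommutativity the flip need not be $A$-linear.
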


The following fact tells us the projectivity of modules over Hopf algebras. 

\begin{lem}[cf. {\cite[Lemma 9.2.9]{witherspoon2019hochschild}}]
\label{lem.hopf-proj}
	Let $P$ be a projective $A$-module and $M$ a left $A$-module. 
	Then $P \otimes M$ is a projective $A$-module. 
	If the antipode $S$ is bijective, 
	then $M \otimes P$ is a projective $A$-module.
\end{lem}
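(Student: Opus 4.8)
The plan is to prove $P \otimes M$ is projective by reducing to the case of a free module and using the Hopf-algebra structure to untwist the diagonal action. First I would recall that it suffices to treat $P = A$ (the regular left $A$-module): an arbitrary projective $P$ is a direct summand of a free module $A^{(I)}$, and $\otimes M$ is additive and commutes with direct sums, so if $A \otimes M$ is projective then $A^{(I)} \otimes M \cong (A \otimes M)^{(I)}$ is projective and hence so is its summand $P \otimes M$. Thus the crux is to show that $A \otimes M$, with the diagonal action $a \cdot (b \otimes m) = a^{(1)}b \otimes a^{(2)}m$, is a projective — in fact free — $A$-module.

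The key step is to produce an $A$-module isomorphism between $A \otimes M$ (diagonal action) and $A \otimes M_0$ (action only on the first factor, $M_0$ denoting the underlying vector space of $M$ with trivial $A$-action), the latter being visibly free on a $k$-basis of $M$. Define $\map{\varphi}{A \otimes M}{A \otimes M}$ by $\varphi(a \otimes m) = a^{(1)} \otimes S(a^{(2)})m$ and $\map{\psi}{A \otimes M}{A \otimes M}$ by $\psi(a \otimes m) = a^{(1)} \otimes a^{(2)}m$. One checks using the antipode axiom $a^{(1)}S(a^{(2)}) = \varepsilon(a)1_A = a^{(1)}S(a^{(2)})$ (more precisely $\sum a^{(1)}S(a^{(2)}) = \varepsilon(a)1_A$), coassociativity, and the counit axiom that $\varphi$ and $\psi$ are mutually inverse $k$-linear maps. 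Then one verifies that $\varphi$ intertwines the diagonal action on the source with the first-factor-only action on the target: for $x \in A$, $\varphi(x \cdot (a \otimes m)) = \varphi(x^{(1)}a \otimes x^{(2)}m) = x^{(1)}a^{(1)} \otimes S(x^{(2)}a^{(2)})x^{(3)}m = x^{(1)}a^{(1)} \otimes S(a^{(2)})S(x^{(2)})x^{(3)}m = x^{(1)}a^{(1)} \otimes \varepsilon(x^{(2)})S(a^{(2)})m = x a^{(1)} \otimes S(a^{(2)})m = x \cdot \varphi(a \otimes m)$, using $S(ab) = S(b)S(a)$ from Proposition 2.1(1) and the antipode and counit axioms. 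Hence $A \otimes M$ (diagonal) $\cong$ $A \otimes M_0$ (first factor) is free, proving the first assertion.

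For the second assertion, that $M \otimes P$ is projective when $S$ is bijective, I would argue symmetrically. Again reduce to $P = A$ and show $M \otimes A$ with action $a \cdot (m \otimes b) = a^{(1)}m \otimes a^{(2)}b$ is free. The analogous untwisting map here is $\map{\varphi'}{M \otimes A}{M \otimes A}$, $\varphi'(m \otimes a) = S(a^{(1)})m \otimes a^{(2)}$ (or a variant), whose inverse requires solving for the first-factor action in terms of the diagonal one; this is where bijectivity of $S$ enters, since one needs $S^{-1}$ to express the inverse map and to check the intertwining property. Concretely, the candidate inverse is $m \otimes a \mapsto S^{-1}(a^{(2)})m \otimes a^{(1)}$, and verifying $\varphi' \circ (\text{inverse}) = \id{}$ uses the antipode axioms together with $S \circ S^{-1} = \id{A} = S^{-1} \circ S$; cocommutativity is not needed, only bijectivity of $S$.

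The main obstacle — really the only delicate point — is bookkeeping the Sweedler indices correctly when composing the twisting maps and checking the module-homomorphism property, in particular making sure coassociativity is applied consistently so that the triple-coproduct terms $a^{(1)} \otimes a^{(2)} \otimes a^{(3)}$ are unambiguous, and tracking where $S(ab) = S(b)S(a)$ reverses the order. There is no conceptual difficulty: once the correct untwisting isomorphism is written down, everything follows from the Hopf axioms in Proposition 2.1 and the definitions of the tensor-product action in the Definition above. I would present the $P = A$ case in full and then note the general case follows by the additivity/direct-summand argument.
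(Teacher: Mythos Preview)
Your argument is correct and is the standard proof of this fact: reduce to $P=A$ by additivity, then untwist the diagonal action on $A\otimes M$ via $a\otimes m\mapsto a^{(1)}\otimes S(a^{(2)})m$ to exhibit it as the free module $A\otimes M_0$; the $M\otimes A$ case is handled symmetrically with $S^{-1}$. (One small comment: for the second assertion the cleanest untwisting map is $m\otimes a\mapsto S^{-1}(a^{(1)})m\otimes a^{(2)}$, whose intertwining check uses the identity $S^{-1}(x^{(2)})x^{(1)}=\varepsilon(x)$; your written $\varphi'$ with $S$ instead of $S^{-1}$ does not quite work as stated, though you flagged ``or a variant'' and correctly identified where bijectivity of $S$ enters.)

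Note, however, that the paper does not supply its own proof of this lemma: it is stated in the preliminaries with a citation to \cite[Lemma~9.2.9]{witherspoon2019hochschild} and no argument is given. Your proof is essentially the one found in that reference, so there is nothing to contrast at the level of strategy.
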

\subsection{Symmetric cohomology of groups}
In this subsection, we recall the definition 
of the symmetric cohomology of groups 
which is introduced by Staic \cite[Section 5]{staic20093}. 
Let $n$ be a non-negative integer, 
$G^n$ direct products of $n$ times of a group $G$, 
when $n=0$, $G^0$ is considered as the trivial group. 
For a $G$-module $X$, we put 
$\cpx{C}{}{n}{G}{X} = \{ \map{f}{G^n}{X} \}$ 
and define $\map{\dif{C}{n}}{\cpx{C}{}{n}{G}{X}}{\cpx{C}{}{n+1}{G}{X}}$ by 
\begin{align*}
	\dif{C}{n}(f)(g_1, \dots ,g_{n+1}) &= g_1 f(g_2,\dots ,g_{n+1}) \\
											 &\quad\,+ \sum_{i=1}^n (-1)^i f(g_1 , \dots , g_i g_{i+1} , \dots ,g_{n+1}) + (-1)^{n+1} f(g_1 , \dots ,g_n).
\end{align*}
Then $\cpx{C}{}{\bullet}{G}{X}$ is a complex of abelian groups. 
Its cohomology is called the \textit{group cohomology} 
and is denoted by $\hcoho{\bullet}{G}{X}$. 
It is constructed an action of the symmetric group $S_{n+1}$ 
on $\cpx{C}{}{n}{G}{X}$ for each $n\geq 0$. 
Namely, the element $\sigma_i = (i,i+1) \in S_{n+1}$ 
acts on $\cpx{C}{}{n}{G}{X}$ by 
\begin{align*}
	(\sigma_1 \cdot f)(g_1 , \dots , g_n) 
	&= -g_1 f(g_1^{-1},g_1g_2,g_3 , \dots ,g_n), \\
	(\sigma_i \cdot f)(g_1 , \dots , g_n) 
	&= -f(g_1,\dots ,g_{i-1}g_i,g_i^{-1},g_ig_{i+1},\dots ,g_n) , 
	\text{ for } 2 \leq i \leq n-1 ,\\
	(\sigma_n \cdot f)(g_1 , \dots , g_n) 
	&= -f(g_1 , \dots ,g_{n-1}g_n , g_n^{-1})
\end{align*}
for $f \in \cpx{C}{}{n}{G}{X}$ and $g_1 , \dots , g_n \in G$. 
Note that this action is compatible with the differential $\dif{C}{}$, 
and hence $\cpx{CS}{}{\bullet}{G}{X} = \cpx{C}{}{\bullet}{G}{X}^{S_{\bullet +1}}$ 
is a subcomplex of $\cpx{C}{}{\bullet}{G}{X}$. 
Its cohomology is called the \textit{symmetric cohomology} 
and is denoted by $\shcoho{\bullet}{G}{X}$. 

On the other hand, $\hcoho{\bullet}{G}{X}$ can be defined alternatively 
using a homogeneous complex. 
For a non-negative integer $n$, and let $\Z [G^{n+1}]$ 
be a $G$-module via
$$
	g \cdot (g_0 , \dots , g_n) = (gg_0, \dots ,gg_n)
$$
for any $g,g_0, \dots , g_n \in G$. 
There is a projective resolution of $\Z$ as $\Z G$-modules:
$$
	\cdots \longrightarrow \Z [G^{n+1}] 
	\overset{\diff{n}{}}{\longrightarrow} \Z [G^{n}] 
	\longrightarrow 
	\cdots \longrightarrow \Z [G^2] 
	\overset{\diff{1}{}}{\longrightarrow} \Z [G^1] 
	\overset{\diff{0}{}}{\longrightarrow} \Z \longrightarrow 0 , 
$$
where we set
$\diff{n}{}(g_0 , \dots , g_{n}) 
= \sum_{i=0}^{n} (-1)^i (g_0 , \dots , g_{i-1} , g_{i+1} , \dots , g_{n})$.
Then $\cpx{K}{}{\bullet}{G}{X}$ and $\cpx{C}{}{\bullet}{G}{X}$ 
are isomorphic as complexes 
where we put $\cpx{K}{}{n}{G}{X} = \Hom{G}{\Z [G^{n+1}]}{X}$ 
and $\dif{K}{n} = \Hom{G}{\diff{n+1}{}}{X}$. Hence $\hcoho{\bullet}{G}{X}$ 
is also defined using $\cpx{K}{}{\bullet}{G}{X}$. 
We define an action of $S_{n+1}$ on $\cpx{K}{}{n}{G}{X}$ 
for each $n \geq 0$. 
Namely, the element $\sigma_i$ acts on $\cpx{K}{}{n}{G}{X}$ by 
$$
	(\sigma_i \cdot f)(g_0 , \dots , g_n) 
	= -f(g_0 , \dots , g_{i} , g_{i-1} , \dots , g_n), \text{ for }1 \leq i \leq n
$$
for $f \in \cpx{K}{}{n}{G}{X}$ and $g_0, \dots , g_n \in G$. 
Note that this action is compatible with the differential $\dif{K}{}$, 
and hence 
$\cpx{KS}{}{\bullet}{G}{X} = \cpx{K}{}{\bullet}{G}{X}^{S_{\bullet +1}}$ 
is a subcomplex of $\cpx{K}{}{\bullet}{G}{X}$. 
Moreover, there is an isomorphism 
$\cpx{CS}{}{\bullet}{G}{X} \cong \cpx{KS}{}{\bullet}{G}{X}$ 
as complexes 
(see Bardakov-Neshchadim-Singh \cite{bardakov2018exterior}, 
Pirashvili \cite{pirashvili2018symmetric}). 
Hence, we see that $\shcoho{\bullet}{G}{X}$ is also defined 
using $\cpx{KS}{}{\bullet}{G}{X}$.
\section{Symmetric cohomology and symmetric Hochschild cohomology}
In this section, we recall the definition 
of the Hopf algebra cohomology 
which is a generalization of the group cohomology 
and define the symmetric cohomology 
for cocommutative Hopf algebras.
\subsection{Definition of symmetric cohomology}
Let us start with the definition of the Hopf algebra cohomology.
\begin{defin}[cf. {\cite[Definition 9.3.5]{witherspoon2019hochschild}}]
	Let $A$ be a Hopf algebra and $M$ a left $A$-module. 
	The \textit{Hopf algebra cohomology} $\hcoho{\bullet}{A}{M}$ 
	of $A$ with coefficients in $M$ is defined by 
    $$
		\hcoho{n}{A}{M} = \ext{A}{n}{k}{M}.
    $$
\end{defin}

We construct a standard non-homogeneous complex 
which gives the Hopf algebra cohomology. 
Let $n$ be a non-negative integer. 
Suppose that, for any $b, a_0, \dots , a_n \in A$, 
$\res{T}{n}{}{A} = A^{\otimes n+1}$ is a left $A$-module via
$
	b \cdot (a_0 \otimes a_1 \otimes \cdots \otimes a_n) 
	= ba_0 \otimes a_1 \otimes \cdots \otimes a_n.
$
Then there is a projective resolution of $k$ as left $A$-modules:
$$
	\cdots \longrightarrow \res{T}{n}{}{A} 
	\overset{\diff{n}{T}}{\longrightarrow} \res{T}{n-1}{}{A} 
	\longrightarrow \cdots \longrightarrow 
	\res{T}{1}{}{A} \overset{\diff{1}{T}}{\longrightarrow} 
	\res{T}{0}{}{A} \overset{\diff{0}{T}}{\longrightarrow} k \longrightarrow 0,
$$
where we set 
\vspace{-0.5em}
\begin{align*}
	&\diff{0}{T} = \varepsilon ,\\
	&\diff{n}{T}(a_0 \otimes \cdots \otimes a_n) 
	= \sum_{i=0}^{n-1} (-1)^i a_0 \otimes \cdots 
	\otimes a_i a_{i+1} \otimes \cdots \otimes a_n \\
	&\hspace{35mm} + (-1)^{n} a_0 \otimes \cdots 
	\otimes a_{n-1} \varepsilon(a_n), \text{ for } n \geq 1.
\end{align*}
Moreover, we denote 
$\cpx{C}{}{n}{A}{M} = \Hom{A}{\res{T}{n}{}{A}}{M}$ 
and $\dif{C}{n} = \Hom{A}{\diff{n+1}{T}}{M}$. 
Next, we define an action of the symmetric group $S_{n+1}$ 
on $\cpx{C}{}{n}{A}{M}$ for each $n \geq 0$. 
Let $A$ be a cocommutative Hopf algebra. 
Then we define the action of $\sigma_i = (i,i+1) \in S_{n+1}$ 
on $\cpx{C}{}{n}{A}{M}$ by 
\begin{align}
\label{eq:sa1}
	\begin{cases}
		(\sigma_i f)(a_0 \otimes \cdots \otimes a_n) 
		= -f(a_0 \otimes \cdots \otimes a_{i-1} a_i^{(1)} 
		\otimes S(a_i^{(2)}) \otimes a_i^{(3)} a_{i+1} \otimes 
		\cdots  \otimes a_n), \\
		\hspace{105.5mm}\text{ for } 1 \leq i \leq n-1, \\
		(\sigma_n f)(a_0 \otimes \cdots \otimes a_n) 
		= -f(a_0 \otimes \cdots \otimes a_{n-1}a_n^{(1)} \otimes S(a_n^{(2)}))
	\end{cases}
\end{align}
for $f \in \cpx{C}{}{n}{A}{M}$ and $a_0 , \dots , a_n \in A$. 
We show that these formulas \eqref{eq:sa1} are well-defined 
as an action, and the action is compatible with the differential.
\begin{prop}
\label{prop.coho-nonhomo-welldef}
	The above formulas \eqref{eq:sa1} define an action 
	of the symmetric group $S_{n+1}$ on $\cpx{C}{}{n}{A}{M}$ 
	which is compatible with the differential $\dif{C}{n}$ 
	for each $n \geq 0$.
\end{prop}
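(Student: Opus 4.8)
The plan is to establish the statement in two parts: first, that the formulas \eqref{eq:sa1} extend to a genuine left action of $S_{n+1}$ on $\cpx{C}{}{n}{A}{M}$, and second, that this action makes $\cpx{CS}{}{\bullet}{A}{M}$ a subcomplex, namely that $\dif{C}{n}$ carries the $S_{n+1}$-fixed part of $\cpx{C}{}{n}{A}{M}$ into the $S_{n+2}$-fixed part of $\cpx{C}{}{n+1}{A}{M}$. For the first part I would invoke the Coxeter presentation of $S_{n+1}$ on the adjacent transpositions $\sigma_1,\dots,\sigma_n$, with relations $\sigma_i^2=1$, $\sigma_i\sigma_j=\sigma_j\sigma_i$ for $|i-j|\ge 2$, and $\sigma_i\sigma_{i+1}\sigma_i=\sigma_{i+1}\sigma_i\sigma_{i+1}$; it then suffices to verify (a) that each $\sigma_i f$ is again $A$-linear, so that $\sigma_i$ really acts on $\cpx{C}{}{n}{A}{M}=\Hom{A}{\res{T}{n}{}{A}}{M}$; (b) $\sigma_i^2 f=f$; (c) $\sigma_i\sigma_j f=\sigma_j\sigma_i f$ for $|i-j|\ge 2$; and (d) the braid relation (for $n\le 2$ only some of these are nonvacuous). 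I would note at the outset that the right-hand sides of \eqref{eq:sa1} are honest $k$-linear maps --- composites of iterated coproducts, the antipode applied to the middle tensor factor, and multiplications --- so there is no ambiguity concealed in the Sweedler notation.

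Step (a) is immediate: the left $A$-module structure on $\res{T}{n}{}{A}=A^{\otimes n+1}$ acts only on the factor $a_0$, and in \eqref{eq:sa1} that factor is either untouched (for $2\le i\le n$) or only right-multiplied by a coproduct part $a_1^{(1)}$ (for $i=1$), so left multiplication by $b\in A$ commutes past $\sigma_i$. Steps (b) and (c) are bookkeeping with the Hopf axioms. For (b) one expands $\sigma_i(\sigma_i f)$, uses coassociativity to lay out the iterated coproduct of $a_i$, rewrites $\Delta\bigl(S(a_i^{(2)})\bigr)$ by the cocommutative identity $\Delta\circ S=(S\otimes S)\circ\Delta$ (which follows from $(S\otimes S)\circ\tw\circ\Delta=\Delta\circ S$ together with $\tw\circ\Delta=\Delta$), cancels the resulting $S\circ S$ by $S^2=\id{A}$, and collapses $a^{(1)}S(a^{(2)})=\varepsilon(a)1_A=S(a^{(1)})a^{(2)}$ via the antipode and counit axioms; the three shapes $i=1$, $2\le i\le n-1$, $i=n$ go through in the same way. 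For (c): when $|i-j|\ge 3$ the two operators modify disjoint sets of tensor factors and plainly commute; when $|i-j|=2$ they share a single factor, on which one of them performs a right multiplication and the other a left multiplication involving the coproducts of two distinct of the $a$'s, so associativity of $A$ yields commutativity (the subcases with $j=n$ are written out separately but behave identically).

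The main obstacle is the braid relation (d): one must expand $\sigma_i\sigma_{i+1}\sigma_i f$ and $\sigma_{i+1}\sigma_i\sigma_{i+1}f$ each as $-f$ of an expression fully split by the iterated coproducts of $a_{i-1},a_i,a_{i+1},a_{i+2}$, and then match the two term by term, using coassociativity, cocommutativity, $S(ab)=S(b)S(a)$, $S^2=\id{A}$, and the antipode/counit axioms simultaneously --- the delicate point being to keep track of which Sweedler component of which $a$ lands in which factor. A conceptually cleaner route, which also trivializes (d), is to realize the same $S_{n+1}$-action on the isomorphic \emph{homogeneous} resolution of $k$: equip $A^{\otimes\bullet+1}$ with the diagonal action $b\cdot(a_0\otimes\cdots\otimes a_n)=b^{(1)}a_0\otimes\cdots\otimes b^{(n+1)}a_n$ and with the face maps obtained by applying $\varepsilon$ to a single factor. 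On that model $\sigma_i$ is simply $(-1)$ times the transposition of the $(i-1)$-st and $i$-th factors, so it is manifestly an $S_{n+1}$-action (the sign-twisted permutation action on tensor factors), it is $A$-linear precisely because $A$ is cocommutative, and it is compatible with the faces because faces and permutations of $A^{\otimes\bullet+1}$ satisfy the identities of a crossed-simplicial ($\Delta S$) structure. The formulas \eqref{eq:sa1} are then the transport of this action along the classical antipode-built chain isomorphism between the homogeneous resolution and $\res{T}{\bullet}{}{A}$, whose degree-$n$ component may be taken to be $a_0\otimes\cdots\otimes a_n\mapsto a_0^{(1)}\otimes S(a_0^{(2)})a_1^{(1)}\otimes S(a_1^{(2)})a_2^{(1)}\otimes\cdots\otimes S(a_{n-1}^{(2)})a_n$, and the proposition follows; the cost of this route is exhibiting that isomorphism and checking that it intertwines the two actions, which is again a routine computation.

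Finally, for compatibility with $\dif{C}{n}$ in the direct approach, I would write $\diff{n+1}{T}$ as an alternating sum of face maps $\partial_\ell\colon A^{\otimes n+2}\to A^{\otimes n+1}$ (for $0\le\ell\le n$ the map $\partial_\ell$ multiplies the $\ell$-th and $(\ell+1)$-st factors, and $\partial_{n+1}$ applies $\varepsilon$ to the last) and record the crossed-simplicial identities relating each transposition operator to these $\partial_\ell$; granting them, if $f\in\cpx{C}{}{n}{A}{M}$ is $S_{n+1}$-fixed then for every generator $\sigma_j$ of $S_{n+2}$ the sum defining the action of $\sigma_j$ on $\dif{C}{n}f$ reindexes back to $\dif{C}{n}f$, so $\dif{C}{n}f$ is $S_{n+2}$-fixed. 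On the homogeneous model these identities are part of the $\Delta S$-structure and hold automatically, so there compatibility with the differential requires no separate argument. Assembling (a)--(d) together with this last step proves the proposition.
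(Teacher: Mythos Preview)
Your direct verification via the Coxeter presentation --- checking $A$-linearity, the involution relation, the commuting relations, and the braid relation by explicit Sweedler-notation computation, and then checking that $\dif{C}{n}$ preserves invariants --- is exactly the paper's approach; the paper carries out the braid relation and one generic case of the differential compatibility in full and leaves the remaining cases to analogous calculations.

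Your alternative route through the homogeneous resolution is a genuine reorganization of the paper's logic. The paper proves the present proposition directly, then in Proposition~\ref{prop.coho-homo-welldef} establishes the (obvious) sign-permutation action on the homogeneous complex, and only afterwards, in Proposition~\ref{prop.nonhomo-homo-iso}, constructs the chain isomorphism $\varphi^\bullet,\psi^\bullet$ and checks that it intertwines the two actions. You propose instead to take the homogeneous action as primitive (where the Coxeter relations and the face-compatibility are transparent, coming from the symmetric crossed-simplicial structure), transport it along $\psi^\bullet$, and then identify the result with the formulas \eqref{eq:sa1}. What this buys is that the braid relation and the differential compatibility become tautological on the homogeneous side; the price is that one must still compute that the transported action agrees with \eqref{eq:sa1}, which is essentially the same Sweedler manipulation the paper performs inside the proof of Proposition~\ref{prop.nonhomo-homo-iso}. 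Either ordering is valid; yours front-loads the conceptual picture, the paper's keeps the non-homogeneous story self-contained before introducing the homogeneous model.
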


\begin{proof}
	First, we check that, for $1 \leq i \leq n$, 
	the action of $\sigma_i$ satisfies relations of the Coxeter presentation 
	of $S_{n+1}$ which is 
	\begin{align*}
		S_{n+1} = \langle \sigma_i , i\in \{1,2,\cdots ,n\} \mid \,&\sigma_i^2 
		= e \ \text{for} \ i\in \{1,\cdots , n\} , \\
			&\sigma_i \sigma_{i+1} \sigma_i = \sigma_{i+1} \sigma_i \sigma_{i+1}
			 \ \text{for} \ i \in \{1,\cdots ,n-1\} , \\
			&\sigma_i \sigma_j = \sigma_j \sigma_i\ 
			\text{for}\  |i-j|\geq 2 \rangle. 
	\end{align*}
	Let $f \in \cpx{C}{}{n}{A}{M}$, $a_0,\dots ,a_n \in A$ 
	and $i$ an integer such that $1 \leq i \leq n-1$. 
	For the left hand side of the second relation 
	$\sigma_i \sigma_{i+1} \sigma_i = \sigma_{i+1} \sigma_i \sigma_{i+1}$, 
	we have
	\begin{align*}
		&(\sigma_i \sigma_{i+1} \sigma_i f)(a_0 \otimes \cdots \otimes a_{n}) \\
		&\quad=-(\sigma_{i+1} \sigma_i f)(a_0 \otimes 
		\cdots \otimes a_{i-1} a_i^{(1)} \otimes S(a_i^{(2)}) 
		\otimes a_i^{(3)} a_{i+1} \otimes \cdots \otimes a_{n} ) \\
		&\quad=(\sigma_i f)(a_0 \otimes \cdots 
		\otimes a_{i-1} a_i^{(1)} \otimes S(a_i^{(2)})a_i^{(3)} a_{i+1}^{(1)} 
		\otimes S(a_i^{(4)} a_{i+1}^{(2)}) \otimes a_i^{(5)} a_{i+1}^{(3)} a_{i+2} 
		\otimes \cdots \otimes a_{n}) \\
		&\quad=(\sigma_i f)(a_0 \otimes \cdots 
		\otimes a_{i-1} a_i^{(1)} \otimes \varepsilon(a_i^{(2)}) a_{i+1}^{(1)} 
		\otimes S(a_{i+1}^{(2)})S(a_i^{(3)}) \otimes a_i^{(4)} a_{i+1}^{(3)} a_{i+2} 
		\otimes \cdots \otimes a_{n}) \\
		&\quad=(\sigma_i f)(a_0 \otimes \cdots \otimes a_{i-1} a_i^{(1)} 
		\otimes a_{i+1}^{(1)} \otimes S(a_{i+1}^{(2)})S(a_i^{(2)}) 
		\otimes a_i^{(3)} a_{i+1}^{(3)} a_{i+2} \otimes \cdots \otimes a_{n}) \\
		&\quad=-f(a_0 \otimes \cdots \otimes a_{i-1} a_i^{(1)} a_{i+1}^{(1)} 
		\otimes S(a_{i+1}^{(2)}) \otimes a_{i+1}^{(3)} S(a_{i+1}^{(4)})S(a_i^{(2)}) 
		\otimes a_i^{(3)} a_{i+1}^{(5)} a_{i+2} \otimes \cdots \otimes a_{n}) \\
		&\quad=-f(a_0 \otimes \cdots \otimes a_{i-1} a_i^{(1)} a_{i+1}^{(1)} 
		\otimes S(a_{i+1}^{(2)}) \otimes \varepsilon(a_{i+1}^{(3)})S(a_i^{(2)}) 
		\otimes a_i^{(3)} a_{i+1}^{(4)} a_{i+2} \otimes \cdots \otimes a_{n}) \\
		&\quad=-f(a_0 \otimes \cdots \otimes a_{i-1} a_i^{(1)} a_{i+1}^{(1)} 
		\otimes S(a_{i+1}^{(2)}) \otimes S(a_i^{(2)}) \otimes a_i^{(3)}  
		\varepsilon(a_{i+1}^{(3)}) a_{i+1}^{(4)} a_{i+2} 
		\otimes \cdots \otimes a_{n}) \\
		&\quad=-f(a_0 \otimes \cdots \otimes a_{i-1} a_i^{(1)} a_{i+1}^{(1)} 
		\otimes S(a_{i+1}^{(2)}) \otimes S(a_i^{(2)}) 
		\otimes a_i^{(3)} a_{i+1}^{(3)} a_{i+2} \otimes \cdots \otimes a_{n}).
	\end{align*}
On the other hand, for the right hand side, we have
\vspace{-0.5em}
	\begin{align*}
		&(\sigma_{i+1} \sigma_i \sigma_{i+1} f)(a_0 \otimes \cdots \otimes a_{n}) \\
		&\quad=-(\sigma_i \sigma_{i+1} f)(a_0 \otimes \cdots  
		\otimes a_i a_{i+1}^{(1)} \otimes S(a_{i+1}^{(2)}) 
		\otimes a_{i+1}^{(3)} a_{i+2} \otimes \cdots \otimes a_{n}) \\
		&\quad=(\sigma_{i+1} f)(a_0 \otimes \cdots 
		\otimes a_{i-1} a_i^{(1)} a_{i+1}^{(1)} \otimes S(a_i^{(2)} a_{i+1}^{(2)}) 
		\otimes a_i^{(3)} a_{i+1}^{(3)} S(a_{i+1}^{(4)}) 
		\otimes a_{i+1}^{(5)} a_{i+2} \otimes \cdots \otimes a_{n}) \\
		&\quad=(\sigma_{i+1} f)(a_0 \otimes \cdots 
		\otimes a_{i-1} a_i^{(1)} a_{i+1}^{(1)} \otimes  S(a_{i+1}^{(2)})S(a_i^{(2)}) 
		\otimes a_i^{(3)} \varepsilon(a_{i+1}^{(3)}) \otimes a_{i+1}^{(4)} a_{i+2} 
		\otimes \cdots \otimes a_{n}) \\
		&\quad=(\sigma_{i+1} f)(a_0 \otimes \cdots 
		\otimes a_{i-1} a_i^{(1)} a_{i+1}^{(1)} \otimes 
		S(a_{i+1}^{(2)})S(a_i^{(2)}) \otimes a_i^{(3)} \otimes a_{i+1}^{(3)} a_{i+2} 
		\otimes \cdots \otimes a_{n}) \\ 
		&\quad= -f(a_0 \otimes \cdots 
		\otimes a_{i-1} a_i^{(1)} a_{i+1}^{(1)} 
		\otimes S(a_{i+1}^{(2)})S(a_i^{(2)}) a_i^{(3)} 
		\otimes S(a_i^{(4)}) \otimes a_i^{(5)} a_{i+1}^{(3)} a_{i+2} 
		\otimes \cdots \otimes a_{n}) \\
		&\quad= -f(a_0 \otimes \cdots \otimes a_{i-1} a_i^{(1)} a_{i+1}^{(1)} 
		\otimes S(a_{i+1}^{(2)})\varepsilon(a_i^{(2)}) 
		\otimes S(a_i^{(3)}) \otimes a_i^{(4)} a_{i+1}^{(3)} a_{i+2} 
		\otimes \cdots \otimes a_{n}) \\
		&\quad= -f(a_0 \otimes \cdots \otimes a_{i-1} a_i^{(1)} a_{i+1}^{(1)} 
		\otimes S(a_{i+1}^{(2)}) \otimes S(a_i^{(2)}) 
		\otimes a_i^{(3)} a_{i+1}^{(3)} a_{i+2} \otimes \cdots \otimes a_{n}). 
	\end{align*}
Hence, $\sigma_i \sigma_{i+1} \sigma_i f = \sigma_{i+1} \sigma_i \sigma_{i+1} f$ holds. 
Similarly, the other relations can be checked by calculations.
	
Next, we show that this action is compatible 
with the differential $\dif{C}{n}$. 
Let $f\in \cpx{C}{}{n}{A}{M}$ be invariant under the action of $S_{n+1}$ and $i$ an integer such that 
$3 \leq i \leq n-2$. 
We have
	\vspace{-0.5em}	
	\begin{align*}
		&(\sigma_i \dif{C}{n}(f))(a_0 \otimes \cdots \otimes a_{n+1}) \\
		&\quad= -\dif{C}{n}(f)(a_0 \otimes \cdots \otimes a_{i-1} a_i^{(1)} 
		\otimes S(a_i^{(2)}) \otimes a_i^{(3)} a_{i+1} \otimes \cdots 
		\otimes a_{n+1}) \\
		&\quad= -\Bigg\{ \sum_{j=0}^{i-3} (-1)^j 
		f(a_0 \otimes \cdots \otimes a_j a_{j+1} \otimes \cdots 
		\otimes a_{i-1} a_i^{(1)} \otimes S(a_i^{(2)}) \otimes a_i^{(3)} a_{i+1} 
		\otimes \cdots \otimes a_{n+1}) \\
		&\quad\quad\quad\quad\,+(-1)^{i-2} f(a_0 \otimes \cdots 
		\otimes a_{i-2} a_{i-1} a_i^{(1)} \otimes S(a_i^{(2)}) 
		\otimes a_i^{(3)} a_{i+1} \otimes \cdots \otimes a_{n+1}) \\
		&\quad\quad\quad\quad\,+(-1)^{i-1} 
		f(a_0 \otimes \cdots \otimes a_{i-1} a_i^{(1)}S(a_i^{(2)}) 
		\otimes a_i^{(3)} a_{i+1} \otimes \cdots \otimes a_{n+1}) \\
		&\quad\quad\quad\quad\,+(-1)^i f(a_0 \otimes \cdots 
		\otimes a_{i-1}a_i^{(1)} \otimes S(a_i^{(2)}) a_i^{(3)} a_{i+1} 
		\otimes \cdots \otimes a_{n+1}) \\
		&\quad\quad\quad\quad\,+(-1)^{i+1} f(a_0 \otimes \cdots 
		\otimes a_{i-1} a_i^{(1)} \otimes S(a_i^{(2)}) 
		\otimes a_i^{(3)} a_{i+1} a_{i+2} \otimes \cdots \otimes a_{n+1}) \\
		&\quad\quad\quad\quad\, +\sum_{j=i+2}^{n}(-1)^j 
		f(a_0 \otimes \cdots \otimes a_{i-1} a_i^{(1)} \otimes S(a_i^{(2)}) 
		\otimes a_i^{(3)} a_{i+1} \otimes \cdots \otimes a_j a_{j+1} 
		\otimes \cdots \otimes a_{n+1}) \\
		&\quad\quad\quad\quad\,+(-1)^{n+1} f(a_0 \otimes \cdots 
		\otimes a_{i-1} a_i^{(1)} \otimes S(a_i^{(2)}) \otimes a_i^{(3)} a_{i+1} 
		\otimes \cdots \otimes a_n \varepsilon(a_{n+1})) \Bigg\}.
	\end{align*}
	By deforming the third and fourth term of the above formula, we have
	\begin{align*}
		&(-1)^{i-1} f(a_0 \otimes \cdots \otimes a_{i-1} a_i^{(1)}S(a_i^{(2)}) 
		\otimes a_i^{(3)} a_{i+1} \otimes \cdots \otimes a_{n+1}) \\
		&\quad= (-1)^{i-1} f(a_0 \otimes \cdots \otimes a_{i-1} 
		\varepsilon(a_i^{(1)}) \otimes a_i^{(2)} a_{i+1} \otimes \cdots 
		\otimes a_{n+1}) \\
		&\quad= (-1)^{i-1} f(a_0 \otimes \cdots \otimes a_{i-1} 
		\otimes a_i a_{i+1} \otimes \cdots \otimes a_{n+1}) ,\\
		&(-1)^i f(a_0 \otimes \cdots \otimes a_{i-1}a_i^{(1)} 
		\otimes S(a_i^{(2)}) a_i^{(3)} a_{i+1} \otimes \cdots \otimes a_{n+1}) \\
		&\quad=(-1)^i f(a_0 \otimes \cdots \otimes a_{i-1}a_i^{(1)} 
		\otimes \varepsilon(a_i^{(2)})  a_{i+1} \otimes 
		\cdots \otimes a_{n+1}) \\
		&\quad=(-1)^i f(a_0 \otimes \cdots \otimes a_{i-1}a_i 
		\otimes a_{i+1} \otimes \cdots \otimes a_{n+1}). 
	\end{align*}
	Moreover, using that $f$ is invariant under the action of $S_{n+1}$, 
	the first term is deformed by
	\begin{align*}
		&f(a_0 \otimes \cdots \otimes a_j a_{j+1} \otimes \cdots 
		\otimes a_{i-1} a_i^{(1)} \otimes S(a_i^{(2)}) \otimes a_i^{(3)} a_{i+1} 
		\otimes \cdots \otimes a_{n+1}) \\
		&\quad= -(\sigma_i f)(a_0 \otimes \cdots \otimes a_j a_{j+1} 
		\otimes \cdots \otimes a_{i-1} \otimes a_i \otimes a_{i+1} 
		\otimes \cdots \otimes a_{n+1}) \\
		&\quad= -f(a_0 \otimes \cdots \otimes a_j a_{j+1} \otimes 
		\cdots \otimes a_{i-1} \otimes a_i \otimes a_{i+1} \otimes \cdots \otimes a_{n+1}).
	\end{align*}
	Similarly, other terms can be deformed. 
	Hence, for $3 \leq i \leq n-2$, 
	$\sigma_i \dif{C}{n}(f) = \dif{C}{n}(f)$ holds. 
	By same as the above calculations, 
	the same equation holds for other cases.
\end{proof}

According to Proposition \ref{prop.coho-nonhomo-welldef}, 
the sequence of invariant subspaces 
$\cpx{C}{}{\bullet}{A}{M}^{S_{\bullet +1}}$ 
by the action of symmetric groups becomes a subcomplex 
of $\cpx{C}{}{\bullet}{A}{M}$ and is denoted by $\cpx{CS}{}{\bullet}{A}{M}$. 

\begin{defin}
\label{def.sym-coho}
	Let $A$ be a cocommutative Hopf algebra 
	and $M$ a left $A$-module. 
	The \textit{symmetric cohomology} $\shcoho{\bullet}{A}{M}$ of $A$ 
	with coefficients in $M$ is defined by 
    $$
		\shcoho{n}{A}{M} = \coho{n}{\cpx{CS}{}{\bullet}{A}{M}}.
	$$
\end{defin}

\begin{rem}
	By the structure of group algebras as Hopf algebras, 
	we note that the symmetric cohomology of cocommutive Hopf algebras 
	in Definition \ref{def.sym-coho} 
	is a generalization of the symmetric cohomology of groups.
\end{rem}
\subsection{Symmetric cohomology constructed by the homogeneous complex}
Similar to group cohomology, 
we can construct a homogeneous complex 
which gives the Hopf algebra cohomology. 
We describe its construction and an action of symmetric groups 
on the homogeneous complex. 
Let $A$ be a Hopf algebra and $n$ a non-negative integer. 
Then $\res{\widetilde{T}}{n}{}{A} = A^{\otimes n+1}$ 
is a left $A$-module via
$$
	 b \cdot (a_0 \otimes a_1 \otimes \cdots \otimes a_n) 
	 = b^{(1)} a_0 \otimes b^{(2)} a_1 \otimes \cdots \otimes b^{(n+1)} a_n
$$
for any $b, a_0, \dots , a_n \in A$, 
and there is a projective resolution of $k$ as left $A$-modules:
\begin{align*}
	\cdots \longrightarrow \res{\widetilde{T}}{n}{}{A} 
	\overset{\diff{n}{\widetilde{T}}}{\longrightarrow} 
	\res{\widetilde{T}}{n-1}{}{A} \longrightarrow 
	\cdots \longrightarrow \res{\widetilde{T}}{1}{}{A} 
	\overset{\diff{1}{\widetilde{T}}}{\longrightarrow} 
	\res{\widetilde{T}}{0}{}{A} 
	\overset{\diff{0}{\widetilde{T}}}{\longrightarrow} k \longrightarrow 0,
\end{align*}
where we set
\begin{align*}
	&\diff{0}{\widetilde{T}} = \varepsilon , \\
	&\diff{n}{\widetilde{T}}(a_0 \otimes \cdots \otimes a_n) 
	= \sum_{i=0}^{n}(-1)^i \varepsilon(a_i) a_0 \otimes 
	\cdots \otimes a_{i-1} \otimes a_{i+1} \otimes \cdots 
	\otimes a_n , \text{ for } n \geq 1.
\end{align*}
Moreover, we denote 
$\cpx{K}{}{n}{A}{M} = \Hom{A}{\res{\widetilde{T}}{n}{}{A}}{M}$ 
and $\dif{K}{n} = \Hom{A}{\diff{n+1}{\widetilde{T}}}{M}$. 
Next, we define an action of $S_{n+1}$ on $\cpx{K}{}{n}{A}{M}$ 
for each $n \geq 0$. 
Let $A$ be a cocommutative Hopf algebra. 
Then, for $f \in \cpx{K}{}{n}{A}{M}$, $a_0, \dots , a_n \in A$ 
and $1 \leq i \leq n$, 
we define the action of $\sigma_i$ on $\cpx{K}{}{n}{A}{M}$ by 
\begin{align}
\label{eq:sa2}
	(\sigma_i \cdot f)(a_0 \otimes \cdots \otimes a_n) 
	= - f(a_0 \otimes \cdots \otimes a_i \otimes a_{i-1} 
	\otimes \cdots \otimes a_n).
\end{align}
We show that these formulas \eqref{eq:sa2} 
are well-defined as an action, and the action is compatible with the differential.
\begin{prop}
\label{prop.coho-homo-welldef}
	The above formulas \eqref{eq:sa2} define an action 
	of the symmetric group $S_{n+1}$ on $\cpx{K}{}{n}{A}{M}$ 
	which is compatible with the differential $\dif{K}{n}$.
\end{prop}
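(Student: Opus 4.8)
The plan is to mirror, in the homogeneous setting, the two verifications already carried out in the proof of Proposition~\ref{prop.coho-nonhomo-welldef}: first that the assignment \eqref{eq:sa2} respects the Coxeter relations of $S_{n+1}$, and second that it commutes with $\dif{K}{n}$. The decisive simplification here is that the homogeneous action in \eqref{eq:sa2} is nothing but the signed permutation action on the tensor factors $a_0 \otimes \cdots \otimes a_n$, i.e.\ $(\sigma_i \cdot f)(a_0 \otimes \cdots \otimes a_n) = \sgn(\sigma_i)\, f(\sigma_i^{-1}\cdot(a_0,\dots,a_n))$ where $S_{n+1}$ permutes the $n+1$ slots labelled $0,\dots,n$. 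So there is no Sweedler bookkeeping at all, unlike the non-homogeneous case.

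First I would check well-definedness as an $S_{n+1}$-action. Since $\sigma_i = (i-1,i)$ acts by swapping two adjacent slots and multiplying by $-1$, one verifies directly: $\sigma_i^2 \cdot f = (-1)^2 f = f$; for $|i-j| \ge 2$ the two transpositions act on disjoint pairs of slots and the signs multiply, giving $\sigma_i\sigma_j \cdot f = \sigma_j\sigma_i \cdot f$; and the braid relation $\sigma_i\sigma_{i+1}\sigma_i = \sigma_{i+1}\sigma_i\sigma_{i+1}$ follows because $(i-1\ i)(i\ i+1)(i-1\ i) = (i+1\ i)(i-1\ i)(i\ i+1)$ as permutations of the slot set and both sides carry sign $(-1)^3 = -1$. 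In other words, the sign is the character $\sgn$ and slot-permutation is an honest left action, so \eqref{eq:sa2} is the tensor of these two, hence an action; one should however note this requires no cocommutativity — cocommutativity is only needed to make the map $\cpx{K}{}{\bullet}{A}{M} \to \cpx{C}{}{\bullet}{A}{M}$ (and its inverse) $S_{\bullet+1}$-equivariant, a point the text makes elsewhere.

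Next I would check compatibility with the differential, i.e.\ $\sigma_i \cdot \dif{K}{n}(f) = \dif{K}{n}(f)$ for $f$ fixed by $S_{n+1}$; equivalently that $\dif{K}{n}$ carries the subcomplex of invariants into the invariants, so that $\cpx{KS}{}{\bullet}{A}{M}$ is well-defined. Since $\dif{K}{n} = \Hom{A}{\diff{n+1}{\widetilde{T}}}{M}$, it suffices to show that $\diff{n+1}{\widetilde{T}}\colon \res{\widetilde{T}}{n+1}{}{A} \to \res{\widetilde{T}}{n}{}{A}$ is, after transpose, compatible with the $S$-actions; concretely, for $1\le i\le n+1$ one expands $\dif{K}{n}(f)$ as the alternating sum $\sum_{j=0}^{n+1}(-1)^j f(a_0\otimes\cdots\widehat{a_j}\cdots\otimes a_{n+1})$ (absorbing the $\varepsilon(a_j)$ into the trivial action, since $f\in\Hom{A}{\res{\widetilde{T}}{n+1}{}{A}}{M}$ and $k$ is the trivial module), applies $\sigma_i$, and reindexes. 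The three typical summands — $j<i-1$, $j\in\{i-1,i\}$, and $j>i$ — are handled separately: in the first and third ranges swapping slots $i-1,i$ is the same as applying $\sigma_i$ (resp.\ a shifted transposition) to the face of $f$, which by invariance of $f$ restores the term with the compensating sign; the middle pair $j=i-1$ and $j=i$ get interchanged by the swap, again up to the sign, so their contributions match after relabelling. This is exactly the standard ``simplicial identities vs.\ signed permutation'' computation (it is the reason $\cpx{KS}{}{\bullet}{G}{X}$ is a complex in Staic's group case), and carries over verbatim.

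The main obstacle, such as it is, is purely organizational: keeping the index ranges and the face-vs-transposition matching straight for the boundary cases $i=1$ and $i=n+1$, where one of the ``neighbouring'' faces is the outer face $\widehat{a_0}$ or $\widehat{a_{n+1}}$. There is no conceptual difficulty and no Hopf-algebraic subtlety — the hard work was already done in Proposition~\ref{prop.coho-nonhomo-welldef}; here the calculations are strictly shorter because the action involves no comultiplication. I would therefore present the braid relation and one representative differential case in full and remark that the remaining relations and boundary cases follow by the same (simpler) bookkeeping, exactly as in the proof of Proposition~\ref{prop.coho-nonhomo-welldef}.
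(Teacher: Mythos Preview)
Your approach is essentially the paper's: verify the Coxeter relations directly (the paper writes out the braid relation, you observe more abstractly that \eqref{eq:sa2} is the sign character tensored with slot permutation), and then show $\sigma_i\,\dif{K}{n}(f)=\dif{K}{n}(f)$ for invariant $f$ by splitting the alternating sum according to whether the deleted index $j$ lies below, at, or above $\{i-1,i\}$.

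Two corrections, however. First, your aside that ``this requires no cocommutativity'' is wrong. For $\sigma_i\cdot f$ to land in $\Hom{A}{\res{\widetilde{T}}{n}{}{A}}{M}$ rather than merely in $\Hom{k}{\res{\widetilde{T}}{n}{}{A}}{M}$, the slot swap must commute with the diagonal left $A$-action $b\cdot(a_0\otimes\cdots\otimes a_n)=b^{(1)}a_0\otimes\cdots\otimes b^{(n+1)}a_n$; this forces $b^{(i)}\otimes b^{(i+1)}=b^{(i+1)}\otimes b^{(i)}$, i.e.\ cocommutativity. Second, the scalar $\varepsilon(a_j)$ cannot be ``absorbed into the trivial action'': it is a genuine coefficient in $k$ and must be carried through (as the paper does). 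This does not damage your argument, since swapping $a_{i-1}\leftrightarrow a_i$ also swaps the two middle terms together with their coefficients $\varepsilon(a_{i-1})$ and $\varepsilon(a_i)$, and leaves the coefficient $\varepsilon(a_j)$ untouched in the outer ranges; but the justification you gave for dropping it is incorrect, and $f$ lies in $\cpx{K}{}{n}{A}{M}$, not $\Hom{A}{\res{\widetilde{T}}{n+1}{}{A}}{M}$.
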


\begin{proof}
	First, we check that, for $1 \leq i \leq n$, 
	the action of $\sigma_i$ satisfies relations of the Coxeter presentation 
	of $S_{n+1}$. 
	Let $f \in \cpx{K}{}{n}{A}{M}$, $a_0,\dots ,a_n \in A$ 
	and $i$ an integer such that $1 \leq i \leq n-1$. 
	For the second relation 
	$\sigma_i \sigma_{i+1} \sigma_i = \sigma_{i+1} \sigma_i \sigma_{i+1}$, 
	we have
	\begin{align*}
		(\sigma_i \sigma_{i+1} \sigma_i f)(a_0 \otimes \cdots \otimes a_{n}) 
		&=-(\sigma_{i+1} \sigma_i f)(a_0 \otimes \cdots \otimes a_i 
		\otimes a_{i-1} \otimes \cdots \otimes a_{n}) \\
		&=(\sigma_i f)(a_0 \otimes \cdots \otimes a_i \otimes a_{i+1} 
		\otimes a_{i-1} \otimes \cdots \otimes a_{n}) \\
		&=-f(a_0 \otimes \cdots \otimes a_{i+1} \otimes a_i 
		\otimes a_{i-1} \otimes \cdots \otimes a_{n}) \\
		&=(\sigma_{i+1} f)(a_0 \otimes \cdots \otimes a_{i+1} 
		\otimes a_{i-1} \otimes a_i \otimes \cdots \otimes a_{n}) \\
		&=-(\sigma_i \sigma_{i+1} f)(a_0 \otimes \cdots \otimes a_{i-1} 
		\otimes a_{i+1} \otimes a_i \otimes \cdots \otimes a_{n}) \\
		&=(\sigma_{i+1} \sigma_i \sigma_{i+1} f)(a_0 \otimes \cdots 
		\otimes a_{i-1} \otimes a_i \otimes a_{i+1} \otimes \cdots \otimes a_{n})
	\end{align*}
	Hence, $\sigma_i \sigma_{i+1} \sigma_i 
	= \sigma_{i+1} \sigma_i \sigma_{i+1}$ holds. 
	Similarly, the other relations can be checked by calculations. 
	
	Next, we show that this action is compatible 
	with the differential $\dif{K}{n}$. 
	Let $f\in \cpx{K}{}{n}{A}{M}$ be invariant under the action of $S_{n+1}$ and $i$ an integer 
	such that $3 \leq i \leq n$. 
	Then we have
	\begin{align*}
		&(\sigma_i \dif{K}{n}(f))(a_0 \otimes \cdots \otimes a_{n+1}) \\
		&\quad\,= -\dif{K}{n}(f)(a_0 \otimes \cdots 
		\otimes a_i \otimes a_{i-1} \otimes \cdots \otimes a_{n+1}) \\
		&\quad\,= -\Bigg\{\sum_{j = 0}^{i-3} (-1)^j 
		f(a_0 \otimes \cdots \otimes \varepsilon(a_j)a_{j+1} 
		\otimes \cdots \otimes a_i \otimes a_{i-1} \otimes \cdots 
		\otimes a_{n+1}) \\
		&\quad\quad\quad\quad\,+(-1)^{i-2} 
		f(a_0 \otimes \cdots \otimes \varepsilon(a_{i-2})a_i 
		\otimes a_{i-1} \otimes \cdots \otimes a_{n+1}) \\ 
		&\quad\quad\quad\quad\,+ (-1)^{i-1} 
		f(a_0 \otimes \cdots \otimes \varepsilon(a_i)a_{i-1} 
		\otimes \cdots \otimes a_{n+1}) \\
		&\quad\quad\quad\quad\,+(-1)^i 
		f(a_0 \otimes \cdots  \otimes a_i \otimes 
		\varepsilon(a_{i-1})a_{i+1} \otimes \cdots \otimes a_{n+1}) \\
		&\quad\quad\quad\quad\,+\sum_{j = i+1}^{n+1} (-1)^j 
		f(a_0 \otimes \cdots \otimes a_i \otimes a_{i-1} \otimes \cdots 
		\otimes \varepsilon(a_j)a_{j+1} \otimes \cdots 
		\otimes a_{n+1}) \Bigg\} \\
		&\quad\,= -\sum_{j = 0}^{i-3} (-1)^j 
		f(a_0 \otimes \cdots \otimes \varepsilon(a_j)a_{j+1} \otimes 
		\cdots \otimes a_i \otimes a_{i-1} \otimes \cdots \otimes a_{n+1}) \\
		&\quad\quad\,-(-1)^{i-2} f(a_0 \otimes \cdots \otimes a_i 
		\otimes \varepsilon(a_{i-2})a_{i-1} \otimes \cdots \otimes a_{n+1}) \\ 
		&\quad\quad\,-(-1)^i f(a_0 \otimes \cdots  \otimes 
		\varepsilon(a_{i-1})a_i \otimes a_{i+1} \otimes \cdots \otimes a_{n+1}) \\
		&\quad\quad\,- (-1)^{i-1} 
		f(a_0 \otimes \cdots \otimes a_{i-1} \otimes 
		\varepsilon(a_i)a_{i+1} \otimes \cdots \otimes a_{n+1}) \\
		&\quad\quad\,-\sum_{j = i+1}^{n+1} (-1)^j 
		f(a_0 \otimes \cdots \otimes a_i \otimes a_{i-1} \otimes 
		\cdots \otimes \varepsilon(a_j)a_{j+1} \otimes \cdots \otimes a_{n+1}).
	\end{align*}
	Here, using that $f$ is invariant under the action of $S_{n+1}$, 
	each term can be deformed. 
	Hence, for $3 \leq i \leq n$, $\sigma_i \dif{K}{n}(f) = \dif{K}{n}(f)$ holds. 
	By same as the above calculations, 
	the same equation holds for other cases.
\end{proof}

According to Proposition \ref{prop.coho-homo-welldef}, 
the sequence of invariant subspaces 
$\cpx{K}{}{\bullet}{A}{M}^{S_{\bullet +1}}$ 
by the action of symmetric groups becomes a subcomplex 
of $\cpx{K}{}{\bullet}{A}{M}$ and is denoted by $\cpx{KS}{}{\bullet}{A}{M}$. 
Moreover, we show that there is an isomorphism 
between the non-homogeneous complex and the homogeneous complex.

\begin{prop}
\label{prop.nonhomo-homo-iso}
	Let $A$ be a Hopf algebra and $M$ a left $A$-module. 
	Then there is an isomorphism 
	$\cpx{C}{}{\bullet}{A}{M} \cong \cpx{K}{}{\bullet}{A}{M}$ 
	as complexes. 
	Moreover, if $A$ is cocommutative, 
	then this isomorphism induces an isomorphism 
	$\cpx{CS}{}{\bullet}{A}{M} \cong \cpx{KS}{}{\bullet}{A}{M}$ 
	as complexes. 
	Therefore, $\shcoho{n}{A}{M} = \coho{n}{\cpx{KS}{}{\bullet}{A}{M}}$ holds.
\end{prop}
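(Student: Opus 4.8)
The plan is to realise the isomorphism $\cpx{C}{}{\bullet}{A}{M} \cong \cpx{K}{}{\bullet}{A}{M}$ as the one obtained by dualising an explicit isomorphism of the underlying projective resolutions $\res{T}{\bullet}{}{A} \cong \res{\widetilde{T}}{\bullet}{}{A}$, and then, when $A$ is cocommutative, to check that this dual isomorphism carries the permutation action \eqref{eq:sa2} to the action \eqref{eq:sa1}. Since $\cpx{CS}{}{\bullet}{A}{M}$ and $\cpx{KS}{}{\bullet}{A}{M}$ are, by Propositions \ref{prop.coho-nonhomo-welldef} and \ref{prop.coho-homo-welldef}, the subcomplexes of $S_{\bullet+1}$-invariants, the claimed restriction and the identity $\shcoho{n}{A}{M} = \coho{n}{\cpx{KS}{}{\bullet}{A}{M}}$ will then follow immediately by passing to invariants and taking cohomology.

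First I would write down the Hopf-algebra analogue of the classical change of variables between the homogeneous and non-homogeneous bar resolutions. Define $\map{\Psi_n}{\res{\widetilde{T}}{n}{}{A}}{\res{T}{n}{}{A}}$ by
$$
\Psi_n(a_0 \otimes a_1 \otimes \cdots \otimes a_n) = a_0^{(1)} \otimes S(a_0^{(2)})a_1^{(1)} \otimes S(a_1^{(2)})a_2^{(1)} \otimes \cdots \otimes S(a_{n-1}^{(2)})a_n ,
$$
and $\map{\Phi_n}{\res{T}{n}{}{A}}{\res{\widetilde{T}}{n}{}{A}}$ so that its $j$-th tensor factor is the product, read left to right, of one coproduct copy of each of $a_0, a_1, \dots, a_j$; for a group algebra these specialise to $(g_0,\dots,g_n)\mapsto(g_0, g_0^{-1}g_1, \dots, g_{n-1}^{-1}g_n)$ and $(g_0,\dots,g_n)\mapsto(g_0, g_0 g_1, \dots, g_0 g_1 \cdots g_n)$. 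Using coassociativity and the antipode identities $S(a^{(1)})a^{(2)} = \varepsilon(a)1_A = a^{(1)}S(a^{(2)})$, one checks that $\Psi_n$ and $\Phi_n$ are mutually inverse homomorphisms of left $A$-modules — on the $\res{T}{n}{}{A}$ side the entire module action is carried by the first tensor factor, exactly as needed — and that $\diff{n}{T} \circ \Psi_n = \Psi_{n-1} \circ \diff{n}{\widetilde{T}}$ for all $n$; these are routine Sweedler computations, formally identical to the group case treated in \cite{bardakov2018exterior, pirashvili2018symmetric}. Applying $\Hom{A}{-}{M}$ then yields the complex isomorphism $\cpx{C}{}{\bullet}{A}{M} \cong \cpx{K}{}{\bullet}{A}{M}$, with $\cpx{C}{}{n}{A}{M} \to \cpx{K}{}{n}{A}{M}$ given by $f \mapsto f \circ \Psi_n$.

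Next I would treat the cocommutative case. Write $\tau_i$ for the operator $a_0 \otimes \cdots \otimes a_n \mapsto -(a_0 \otimes \cdots \otimes a_i \otimes a_{i-1} \otimes \cdots \otimes a_n)$ on $\res{\widetilde{T}}{n}{}{A}$, which is a map of left $A$-modules precisely because the diagonal action on $A^{\otimes n+1}$ is symmetric in adjacent factors once $A$ is cocommutative; then \eqref{eq:sa2} is the dual of $\tau_i$. Hence it suffices to prove the identity of $A$-module endomorphisms
$$
\Psi_n \circ \tau_i \circ \Phi_n = \rho_i
$$
of $\res{T}{n}{}{A}$ for each generator $\sigma_i$, where $\rho_i$ is the operator occurring in \eqref{eq:sa1}: for $1 \leq i \leq n-1$, $\rho_i(a_0 \otimes \cdots \otimes a_n) = -(a_0 \otimes \cdots \otimes a_{i-1}a_i^{(1)} \otimes S(a_i^{(2)}) \otimes a_i^{(3)}a_{i+1} \otimes \cdots \otimes a_n)$, and the analogous one-factor-shorter formula for $i=n$. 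Unwinding $\Phi_n$, transposing the factors in positions $i-1$ and $i$, and then applying $\Psi_n$, the "long products" in positions $0,\dots,i-2$ and $i+2,\dots,n$ telescope back to $a_0,\dots,a_{i-2}$ and $a_{i+2},\dots,a_n$, while positions $i-1, i, i+1$ collapse to $a_{i-1}a_i^{(1)}$, $S(a_i^{(2)})$, $a_i^{(3)}a_{i+1}$, the sign being supplied by $\tau_i$. Granting this for every $\sigma_i$, the map $f \mapsto f\circ\Psi_n$ intertwines the two $S_{n+1}$-actions, so it restricts to an isomorphism $\cpx{CS}{}{n}{A}{M} \cong \cpx{KS}{}{n}{A}{M}$ compatible with the differentials, and taking cohomology gives $\shcoho{n}{A}{M} = \coho{n}{\cpx{KS}{}{\bullet}{A}{M}}$.

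The main obstacle is exactly the Sweedler bookkeeping in the identity $\Psi_n \circ \tau_i \circ \Phi_n = \rho_i$: one must track the iterated coproducts introduced by $\Phi_n$ as their pieces are shuffled by $\tau_i$, and cocommutativity enters in an essential way twice — first to make $\tau_i$ an $A$-module map on the diagonal module $\res{\widetilde{T}}{n}{}{A}$, and then to bring the coproduct copies of a given $a_j$ into adjacent positions so that $S(a^{(1)})a^{(2)} = \varepsilon(a)1_A$ can be used — before the telescoping leaves precisely the three prescribed factors in positions $i-1, i, i+1$. The case $i=n$ (and the low-degree cases) should be recorded separately but is easier.
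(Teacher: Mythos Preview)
Your proposal is correct and is essentially the paper's own proof, viewed from the dual side: your maps $\Psi_n,\Phi_n$ on the resolutions are precisely the maps whose $\Hom_A(-,M)$ gives the paper's $\psi^n,\varphi^n$, and your equivariance identity $\Psi_n\circ\tau_i\circ\Phi_n=\rho_i$ is exactly the Sweedler computation the paper carries out (phrased there as showing that $\varphi^n$ and $\psi^n$ each send invariants to invariants). The only cosmetic difference is that you prove $S_{n+1}$-equivariance of the chain isomorphism and then pass to invariants, whereas the paper checks invariance-preservation in both directions separately; the underlying calculation is identical.
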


\begin{proof}
First, we will construct an isomorphism 
$\cpx{C}{}{\bullet}{A}{M} \cong \cpx{K}{}{\bullet}{A}{M}$ 
as complexes. 
For any $f \in \cpx{K}{}{n}{A}{M}$ and $a_0, \dots ,a_n \in A$, define the morphism 
$\map{\varphi^\bullet}{\cpx{K}{}{\bullet}{A}{M}}{\cpx{C}{}{\bullet}{A}{M}}$ 
by 
	$$
		\varphi^n(f)(a_0 \otimes \cdots \otimes a_{n}) 
		= f(a_0^{(1)} \otimes a_0^{(2)}a_1^{(1)} \otimes \cdots 
		\otimes a_0^{(n+1)}a_1^{(n)} \cdots a_{n-1}^{(2)} a_{n}).
    $$
Then we show that $\varphi^\bullet$ is compatible with the differential. 
Namely, we show 
$\varphi^{n+1} \circ \dif{k}{n} = \dif{C}{n} \circ \varphi^n$. 
For the left hand side, we have
	\begin{align*}
		(&\varphi^{n+1} \circ \dif{K}{n})(f)(a_0 \otimes \cdots \otimes a_{n+1}) \\
		&= \dif{K}{n}(f)(a_0^{(1)} \otimes a_0^{(2)}a_1^{(1)} \otimes 
		\cdots \otimes a_0^{(n+2)}a_1^{(n+1)} \cdots a_{n}^{(2)} a_{n+1}) \\
		&= \sum_{i = 0}^{n+1} (-1)^i f(a_0^{(1)} \otimes \cdots 
		\otimes \varepsilon(a_0^{(i+1)} \cdots a_i^{(1)})a_0^{(i+2)} 
		\cdots a_{i+1}^{(1)} \otimes \cdots \otimes a_0^{(n+2)}a_1^{(n+1)} 
		\cdots a_{n}^{(2)} a_{n+1}) \\
		&= \sum_{i = 0}^{n} (-1)^i f(a_0^{(1)} \otimes \cdots 
		\otimes \varepsilon(a_0^{(i+1)})a_0^{(i+2)}
		\varepsilon(a_1^{(i)})a_1^{(i+1)} \cdots 
		\varepsilon(a_i^{(1)})a_i^{(2)}a_{i+1}^{(1)} \otimes \cdots  \\
		&\quad\, \otimes a_0^{(n+2)}a_1^{(n+1)} \cdots 
		a_{n}^{(2)} a_{n+1}) +(-1)^{n+1} f(a_0^{(1)} \otimes \cdots  
		\otimes a_0^{(n+1)}a_1^{(n)} \cdots a_{n-1}^{(2)} a_{n} 
		\varepsilon(a_{n+1}))\\
		&= \sum_{i = 0}^{n} (-1)^i f(a_0^{(1)} \otimes \cdots 
		\otimes a_0^{(i+1)}a_1^{(i)} \cdots a_{i-1}^{(2)} a_i^{(1)} a_{i+1}^{(1)} 
		\otimes \cdots \otimes a_0^{(n+1)}a_1^{(n)} \cdots  \\
		&\quad\, \times a_i^{(n-i+1)}a_{i+1}^{(n-i+1)} \cdots 
		a_{n}^{(2)} a_{n+1})+(-1)^{n+1} f(a_0^{(1)} \otimes \cdots 
		\otimes a_0^{(n+1)}a_1^{(n)} \cdots a_{n-1}^{(2)} a_{n} \varepsilon(a_{n+1})).
	\end{align*}
	On the other hand, for the right hand side, we have
	\begin{align*}
		(&\dif{C}{n} \circ \varphi^n)(f)(a_0 \otimes \cdots \otimes a_{n+1}) \\
		&= \sum_{i = 0}^{n} (-1)^i \varphi^n(f)(a_0 \otimes \cdots 
		\otimes a_ia_{i+1} \otimes \cdots \otimes a_{n+1}) + (-1)^{n+1}
		\varphi^n(f)(a_0 \otimes \cdots \otimes a_n \varepsilon(a_{n+1})) \\
		&= \sum_{i = 0}^{n} (-1)^i f(a_0^{(1)} \otimes \cdots 
		\otimes a_0^{(i+1)} a_1^{(i)}\cdots a_{i-1}^{(2)} a_i^{(1)}a_{i+1}^{(1)} 
		\otimes \cdots \otimes a_0^{(n+1)}a_1^{(n)} \cdots  \\
		&\quad\, \times a_i^{(n-i+1)}a_{i+1}^{(n-i+1)} \cdots 
		a_{n}^{(2)}a_{n+1})+(-1)^{n+1} f(a_0^{(1)} \otimes \cdots 
		\otimes a_0^{(n+1)}a_1^{(n)} \cdots a_{n-1}^{(2)} a_{n} \varepsilon(a_{n+1})).
	\end{align*}
	Hence, $\varphi^{n+1} \circ \dif{K}{n} = \dif{C}{n} \circ \varphi^n$ holds. 
	While, for any $g \in \cpx{C}{}{n}{A}{M}$ and $a_0 , \dots , a_n \in A$, 
	we define the morphism 
	$\map{\psi^\bullet}{\cpx{C}{}{\bullet}{A}{M}}{\cpx{K}{}{\bullet}{A}{M}}$ by
    $$
		\psi^n (g)(a_0 \otimes \cdots \otimes a_{n}) 
		= g(a_0^{(1)} \otimes S(a_0^{(2)}) a_1^{(1)} 
		\otimes S(a_1^{(2)}) a_2^{(1)} \otimes \cdots 
		\otimes S(a_{n-1}^{(2)}) a_{n}).
    $$
	Then we show that $\psi^\bullet$ is the inverse of $\varphi^\bullet$. 
	For all $g \in \cpx{C}{}{n}{A}{M}$, we have
	\vspace{-0.5em}
	\begin{align*}
		(&\varphi^n \circ \psi^n)(g)(a_0 \otimes \cdots \otimes a_{n}) \\
		 &\quad= \psi^n(g)(a_0^{(1)} \otimes a_0^{(2)} a_1^{(1)} 
		 \otimes \cdots \otimes a_0^{(n+1)} \cdots a_{n-1}^{(2)} a_{n}) \\
		 &\quad= g(a_0^{(1)} \otimes S(a_0^{(2)})a_0^{(3)} a_1^{(1)} 
		 \otimes S(a_0^{(4)}a_1^{(2)})a_0^{(5)}a_1^{(3)}a_2^{(1)} \otimes \cdots \\
		 &\quad\quad\otimes S(a_0^{(2n)}a_1^{(2n-2)} \cdots 
		 a_{n-1}^{(2)})a_0^{(2n+1)}a_1^{(2n-1)}\cdots a_{n-1}^{(3)} a_{n}) \\
		 &\quad= g(a_0^{(1)} \otimes \varepsilon(a_0^{(2)})a_1^{(1)} \otimes 
		 \varepsilon(a_0^{(3)})\varepsilon(a_1^{(2)})a_2^{(1)} \otimes \cdots  
		 \otimes \varepsilon(a_0^{(n+1)})\varepsilon(a_1^{(n)}) \cdots 
		 \varepsilon(a_{n-1}^{(2)}) a_{n}) \\
		 &\quad= g(a_0^{(1)}\varepsilon(a_0^{(2)}) \cdots 
		 \varepsilon(a_0^{(n+1)}) \otimes a_1^{(1)}\varepsilon(a_1^{(2)}) 
		 \cdots \varepsilon(a_1^{(n)}) \otimes \cdots \otimes a_{n-1}^{(1)} 
		 \varepsilon(a_{n-1}^{(2)}) \otimes a_{n}) \\
		 &\quad= g(a_0 \otimes \cdots \otimes a_{n+1}).
	\end{align*}
	Hence, $\varphi^n \circ \psi^n = \id{}$ holds. 
	On the other hand, for all $f \in \cpx{K}{}{n}{A}{M}$, we have
	\begin{align*}
		& (\psi^n \circ \varphi^n)(f)(a_0 \otimes \cdots \otimes a_{n}) \\
		&\quad= \varphi^n(f)(a_0^{(1)} \otimes S(a_0^{(2)}) a_1^{(1)} 
		\otimes \cdots \otimes S(a_{n-1}^{(2)}) a_{n}) \\
		&\quad= f(a_0^{(1)} \otimes a_0^{(2)} S(a_0^{(3)}) a_1^{(1)} 
		\otimes a_0^{(4)} S(a_0^{(5)}) a_1^{(2)} S(a_1^{(3)}) a_2^{(1)} 
		\otimes \cdots \\
		&\quad\quad \otimes 
		a_0^{(2n)}S(a_0^{(2n+1)}) a_1^{(2n-2)}S(a_1^{(2n-1)}) 
		\cdots a_{n-1}^{(2)}S(a_{n-1}^{(3)}) a_{n}) \\
		&\quad= f(a_0^{(1)} \otimes \varepsilon(a_0^{(2)})a_1^{(1)} 
		\otimes \varepsilon(a_0^{(3)}) \varepsilon(a_1^{(2)}) a_2^{(1)} 
		\otimes \cdots \otimes \varepsilon(a_0^{(n+1)}) \cdots 
		\varepsilon(a_{n-1}^{(2)}) a_{n}) \\
		&\quad= f(a_0^{(1)} \varepsilon(a_0^{(2)}) \cdots 
		\varepsilon(a_0^{(n+1)}) \otimes \cdots \otimes a_{n-1}^{(1)} 
		\varepsilon(a_{n-1}^{(2)}) \otimes a_{n}) \\
		&\quad= f(a_0 \otimes \cdots \otimes a_{n}).
	\end{align*}
	Therefore, we have $\psi^n \circ \varphi^n = \id{}$, 
	and hence $\cpx{C}{}{\bullet}{A}{M}$ and $\cpx{K}{}{\bullet}{A}{M}$ 
	are isomorphic.
	
	Next, if $A$ is cocommutative, 
	we will show that $\cpx{CS}{}{\bullet}{A}{M}$ 
	and $\cpx{KS}{}{\bullet}{A}{M}$ are isomorphic 
	by $\varphi^\bullet$ and $\psi^\bullet$. 
	For all $f \in \cpx{KS}{}{n}{A}{M}$ and $1 \leq i \leq n$, we have
	\begin{align*}
		(&\sigma_i \varphi^n (f))(a_0 \otimes \cdots \otimes a_{n}) \\ 
		&\quad= - \varphi^n (f)(a_0 \otimes \cdots \otimes a_{i-1} a_i^{(1)} 
		\otimes S(a_i^{(2)}) \otimes a_i^{(3)} a_{i+1} \otimes \cdots \otimes a_{n}) \\
		&\quad= -f(a_0^{(1)} \otimes a_0^{(2)}a_1^{(1)} \otimes \cdots 
		\otimes a_0^{(i)} \cdots a_{i-2}^{(2)} a_{i-1}^{(1)} a_i^{(1)} 
		\otimes a_0^{(i+1)} \cdots a_{i-1}^{(2)} a_i^{(2)} S(a_i^{(3)}) \\
		&\quad\quad  \otimes a_0^{(i+2)} 
		\cdots a_{i-1}^{(3)} a_i^{(4)} a_{i+1}^{(1)}\otimes \cdots 
		\otimes a_0^{(n+1)}\cdots a_{n-1}^{(2)}a_{n}) \\
		&\quad= -f(a_0^{(1)} \otimes a_0^{(2)}a_1^{(1)} \otimes \cdots 
		\otimes a_0^{(i)} \cdots a_{i-2}^{(2)} a_{i-1}^{(1)} a_i^{(1)} 
		\otimes a_0^{(i+1)} \cdots a_{i-1}^{(2)} \varepsilon(a_i^{(2)}) \\
		&\quad\quad  \otimes a_0^{(i+2)} 
		\cdots a_{i-1}^{(3)} a_i^{(3)}a_{i+1}^{(1)}\otimes \cdots 
		\otimes a_0^{(n+1)}\cdots a_{n-1}^{(2)}a_{n}) \\
		&\quad= -f(a_0^{(1)} \otimes a_0^{(2)}a_1^{(1)} \otimes 
		\cdots \otimes a_0^{(i)} \cdots a_{i-2}^{(2)} a_{i-1}^{(1)} a_i^{(1)} 
		\otimes a_0^{(i+1)} \cdots a_{i-1}^{(2)}\otimes a_0^{(i+2)} 
		\cdots a_{i-1}^{(3)} a_i^{(2)}a_{i+1}^{(1)} \\
		&\quad\quad  \otimes \cdots \otimes a_0^{(n+1)}
		\cdots a_{n-1}^{(2)}a_{n}) \\
		&\quad= -(\sigma_i f)(a_0^{(1)} \otimes a_0^{(2)}a_1^{(1)} 
		\otimes \cdots \otimes a_0^{(i)} \cdots 
		a_{i-2}^{(2)} a_{i-1}^{(1)} a_i^{(1)} \otimes a_0^{(i+1)} 
		\cdots a_{i-1}^{(2)} \\
		&\quad\quad \otimes a_0^{(i+2)} \cdots 
		a_{i-1}^{(3)} a_i^{(2)} a_{i+1}^{(1)}\otimes \cdots \otimes a_0^{(n+1)}
		\cdots a_{n-1}^{(2)}a_{n}) \\
		&\quad= f (a_0^{(1)} \otimes a_0^{(2)}a_1^{(1)} \otimes 
		\cdots \otimes a_0^{(i)} \cdots a_{i-1}^{(1)} \otimes 
		a_0^{(i+1)} \cdots a_{i-2}^{(3)} a_{i-1}^{(2)} a_i^{(1)}
		\otimes a_0^{(i+2)} \cdots a_{i-1}^{(3)} a_i^{(2)} a_{i+1}^{(1)} \\
		&\quad\quad \otimes \cdots \otimes a_0^{(n+1)}
		\cdots a_{n-1}^{(2)}a_{n}) \\
		&\quad= \varphi^n (f)(a_0 \otimes \cdots \otimes a_{n}).
	\end{align*}
	Hence, $\varphi^n(f) \in \cpx{CS}{}{n}{A}{M}$ holds. 
	While, for all $g \in \cpx{CS}{}{n}{A}{M}$ and $1 \leq i \leq n$, 
	we have 
	\begin{align*}
		&(\sigma_i \psi^n (g))(a_0 \otimes \cdots \otimes a_{n}) \\
		&\quad= - \psi^n (g)(a_0 \otimes \cdots \otimes a_i 
		\otimes a_{i-1} \otimes \cdots \otimes a_{n}) \\
		&\quad= -g(a_0^{(1)} \otimes S(a_0^{(2)})a_1^{(1)} 
		\otimes \cdots \otimes S(a_{i-2}^{(2)}) a_i^{(1)} 
		\otimes S(a_i^{(2)}) a_{i-1}^{(1)} \otimes S(a_{i-1}^{(2)})a_{i+1}^{(1)}\\ 
		&\quad\quad \otimes \cdots \otimes S(a_{n-1}^{(2)})a_{n}) \\
		&\quad= -(\sigma_i g)(a_0^{(1)} \otimes S(a_0^{(2)})a_1^{(1)} 
		\otimes \cdots \otimes S(a_{i-2}^{(2)}) a_i^{(1)} \otimes 
		S(a_i^{(2)}) a_{i-1}^{(1)} \otimes S(a_{i-1}^{(2)})a_{i+1}^{(1)}\\
		&\quad\quad \otimes \cdots \otimes S(a_{n-1}^{(2)})a_{n}) \\
		&\quad= g(a_0^{(1)} \otimes S(a_0^{(2)})a_1^{(1)} \otimes 
		\cdots \otimes S(a_{i-2}^{(2)}) a_i^{(1)} S(a_i^{(2)})a_{i-1}^{(1)} 
		\otimes S(S(a_i^{(3)}) a_{i-1}^{(2)}) \\
		&\quad\quad  \otimes S(a_i^{(4)}) a_{i-1}^{(3)} S(a_{i-1}^{(4)}) 
		a_{i+1}^{(1)}\otimes \cdots \otimes S(a_{n-1}^{(2)})a_{n}) \\
		&\quad= g(a_0^{(1)} \otimes S(a_0^{(2)})a_1^{(1)} 
		\otimes \cdots \otimes S(a_{i-2}^{(2)}) 
		\varepsilon(a_i^{(1)}) a_{i-1}^{(1)} \otimes S( a_{i-1}^{(2)})a_i^{(2)}
		\otimes S(a_i^{(3)}) \varepsilon(a_{i-1}^{(3)}) a_{i+1}^{(1)} \\
		&\quad\quad  \otimes \cdots \otimes S(a_{n-1}^{(2)})a_{n}) \\
		&\quad= g(a_0^{(1)} \otimes S(a_0^{(2)})a_1^{(1)} \otimes 
		\cdots \otimes S(a_{i-2}^{(2)}) a_{i-1}^{(1)} 
		\otimes S( a_{i-1}^{(2)})a_i^{(1)} \otimes S(a_i^{(2)}) a_{i+1}^{(1)} 
		\otimes \cdots \otimes S(a_{n-1}^{(2)})a_{n}) \\
		&\quad= \psi^n (g)(a_0 \otimes \cdots \otimes a_{n}).
	\end{align*} 
	So, we have $\psi^n(g) \in \cpx{KS}{}{n}{A}{M}$. 
	Therefore, $\cpx{CS}{}{\bullet}{A}{M}$ and $\cpx{KS}{}{\bullet}{A}{M}$ 
	are isomorphic as complexes. 
	Moreover, $\shcoho{n}{A}{M} = \coho{n}{\cpx{KS}{}{\bullet}{A}{M}}$ holds.
\end{proof}


\subsection{Definition of symmetric Hochschild cohomology}
In this subsection, we recall the definition of Hochschild cohomology 
and define symmetric Hochschild cohomology 
for cocommutative Hopf algebras. 
\begin{defin}[cf. {\cite[Section 1.1]{witherspoon2019hochschild}}]
	Let $A$ be a Hopf algebra and $M$ an $A$-bimodule. 
	The \textit{Hochschild cohomology} $\hoch{\bullet}{A}{M}$ of $A$ 
	with coefficients in $M$ is defined by 
    $$
		\hoch{n}{A}{M} = \ext{A^e}{n}{A}{M}.
    $$
\end{defin}

We construct a standard non-homogeneous complex 
which gives the Hochschild cohomology. 
Let $n$ be an integer such that $n \geq 0$, 
and let $\res{T}{n}{e}{A} = A^{\otimes n+2}$ be an $A$-bimodule via
$$
	(b \otimes c^\op) \cdot (a_0 \otimes a_1 \otimes \cdots \otimes a_{n+1}) 
	= ba_0 \otimes a_1 \otimes \cdots \otimes a_{n+1}c
$$
for any $b,c, a_0, \dots , a_n \in A$. 
Then there is a projective resolution of $A$ as $A^\e$-modules:
$$
	\cdots \longrightarrow \res{T}{n}{e}{A} 
	\xrightarrow{\diff{n}{T^\e}} \res{T}{n-1}{e}{A} 
	\longrightarrow 
	\cdots \longrightarrow 
	\res{T}{1}{e}{A} \xrightarrow{\diff{1}{T^\e}} 
	\res{T}{0}{e}{A} \xrightarrow{\diff{0}{T^\e}} A \longrightarrow 0,
$$
where we set
\begin{align*}
	&\diff{0}{T^\e}(a_0 \otimes a_1) = a_0a_1 , \\
	&\diff{n}{T^\e}(a_0 \otimes a_1 \otimes \cdots \otimes a_{n+1}) 
	= \sum_{i=0}^{n} (-1)^i a_0 \otimes a_1 \otimes \cdots 
	\otimes a_i a_{i+1} \otimes \cdots \otimes a_{n+1}, \text{ for } n \geq 1.
\end{align*}
Moreover, we denote $\cpx{C}{e}{n}{A}{M} = \Hom{A}{\res{T}{n}{e}{A}}{M}$ 
and $\dif{C_e}{n} = \Hom{A}{\diff{n+1}{T^e}}{M}$. 
Next, we define an action of the symmetric group $S_{n+1}$ 
on $\cpx{C}{e}{n}{A}{M}$ for each $n \geq 0$. 
Let $A$ be a cocommutative Hopf algebra. 
Then we define the action of $\sigma_i = (i,i+1) \in S_{n+1}$ 
on $\cpx{C}{e}{n}{A}{M}$ by 
\begin{align}\label{eq:sa3}
	(\sigma_i f)(a_0 \otimes \cdots \otimes a_{n+1}) 
	= - f(a_0 \otimes \cdots \otimes a_{i-1}a_i^{(1)} \otimes S(a_i^{(2)}) 
	\otimes a_i^{(3)} a_{i+1} \otimes \cdots \otimes a_{n+1})
\end{align}
for $f \in \cpx{C}{e}{n}{A}{M}$, $a_0 , \dots , a_n \in A$ 
and $1 \leq i \leq n$. 
Similar to the case of symmetric cohomology, 
the above formula \eqref{eq:sa3} is well-defined as an action, 
and the action is compatible with the differential. 
Hence the sequence of invariant subspaces 
$\cpx{C}{e}{\bullet}{A}{M}^{S_{\bullet +1}}$ 
by the action of symmetric groups 
becomes a subcomplex of $\cpx{C}{e}{\bullet}{A}{M}$ 
and is denoted by $\cpx{CS}{e}{\bullet}{A}{M}$.


\begin{defin}
	Let $A$ be a cocommutative Hopf algebra and $M$ an $A$-bimodule. 
	The \textit{symmetric Hochschild cohomology} $\shoch{\bullet}{A}{M}$ of $A$ 
	with coefficients in $M$ is defined by 
	\begin{align*}
		\shoch{n}{A}{M} = \coho{n}{\cpx{CS}{e}{\bullet}{A}{M}}.
	\end{align*}
\end{defin}
Similar to Hopf algebra cohomology, 
we can construct a homogeneous complex 
which gives the Hochschild cohomology. 
We describe its construction and an action of symmetric groups 
on the homogeneous complex. 
Let $A$ be a Hopf algebra and $n$ a non-negative integer. 
Suppose that $\res{\widetilde{T}}{n}{e}{A} = A^{\otimes n+2}$ 
is an $A$-bimodule via
\begin{align*}
	 (b \otimes c^\op) \cdot (a_0 \otimes a_1 \otimes \cdots 
	 \otimes a_{n+1}) = b^{(1)} a_0 \otimes b^{(2)} a_1 \otimes \cdots 
	 \otimes b^{(n+2)} a_{n+1} c
\end{align*}
for any $b,c,a_1, \dots ,a_{n+1} \in A$. 
Then there is a projective resolution of $A$ as $A^\e$-modules:
\begin{align*}
	\cdots \longrightarrow \res{\widetilde{T}}{n}{e}{A} 
	\xrightarrow{\diff{n}{\widetilde{T}^\e}} \res{\widetilde{T}}{n-1}{e}{A} 
	\longrightarrow 
	\cdots \longrightarrow 
	\res{\widetilde{T}}{1}{e}{A} \xrightarrow{\diff{1}{\widetilde{T}^\e}} 
	\res{\widetilde{T}}{0}{e}{A} \xrightarrow{\diff{0}{\widetilde{T}^\e}} 
	A \longrightarrow 0,
\end{align*}
where we set 
$\diff{n}{\widetilde{T}^\e}(a_0 \otimes a_1 \otimes \cdots \otimes a_{n+1})
= \sum_{i=0}^{n} (-1)^i a_0 \otimes a_1 \otimes \cdots \otimes 
\varepsilon(a_i) a_{i+1} \otimes \cdots \otimes a_{n+1}$. 
Moreover, we denote 
$\cpx{K}{e}{n}{A}{M} = \Hom{A}{\res{\widetilde{T}}{n}{e}{A}}{M}$ 
and $\dif{K_e}{n} = \Hom{A}{\diff{n+1}{\widetilde{T^e}}}{M}$. 
Next, we define an action of $S_{n+1}$ on $\cpx{K}{e}{n}{A}{M}$ 
for each $n \geq 0$. 
Let $A$ be a cocommutative Hopf algebra. 
Then we define the action of $\sigma_i$ on $\cpx{K}{e}{n}{A}{M}$ by 
\begin{align}\label{eq:sa4}
	(\sigma_i f) (a_0 \otimes \dots \otimes a_{n+1}) 
	= -f(a_0 \otimes \dots \otimes a_i \otimes a_{i-1} \otimes \dots 
	\otimes a_{n+1}), \text{ for } 1\leq i \leq n
\end{align}
for $f \in \cpx{K}{e}{n}{A}{M}$ and $a_0, \dots , a_{n+1} \in A$. 
Similar to the case of symmetric cohomology, 
the above formula \eqref{eq:sa4} is well-defined as an action, 
and the action is compatible with the differential. 
Hence the sequence of invariant subspaces 
$\cpx{K}{e}{\bullet}{A}{M}^{S_{\bullet +1}}$ 
by the action of symmetric groups 
becomes a subcomplex of $\cpx{K}{e}{\bullet}{A}{M}$ 
and is denoted by $\cpx{KS}{e}{\bullet}{A}{M}$. 
Moreover, we have the following assertion which is similar to 
Proposition \ref{prop.nonhomo-homo-iso}. 

\begin{prop}
\label{prop.nonhomo-homo-iso-hoch}
Let $A$ be a Hopf algebra and $M$ an $A$-bimodule. 
Then there is an isomorphism $\cpx{C}{e}{\bullet}{A}{M} \cong \cpx{K}{e}{\bullet}{A}{M}$ 
as complexes. Moreover, if $A$ is cocommutative, then this isomorphism induces an isomorphism 
$\cpx{CS}{e}{\bullet}{A}{M} \cong \cpx{KS}{e}{\bullet}{A}{M}$ as complexes. 
Therefore, $\shoch{n}{A}{M} = \coho{n}{\cpx{KS}{e}{\bullet}{A}{M}}$ holds.
\end{prop}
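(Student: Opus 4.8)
The argument is entirely parallel to the proof of Proposition~\ref{prop.nonhomo-homo-iso}; the only new feature is an extra tensor slot carrying the right $A$-action, which stays inert throughout. First I would write down the candidate mutually inverse chain maps. Define $\map{\varphi^\bullet}{\cpx{K}{e}{\bullet}{A}{M}}{\cpx{C}{e}{\bullet}{A}{M}}$ by
\[
 \varphi^n(f)(a_0 \otimes \cdots \otimes a_{n+1}) = f\bigl(a_0^{(1)} \otimes a_0^{(2)}a_1^{(1)} \otimes \cdots \otimes a_0^{(n+1)}a_1^{(n)}\cdots a_{n-1}^{(2)}a_n^{(1)} \otimes a_0^{(n+2)}a_1^{(n+1)}\cdots a_n^{(2)}a_{n+1}\bigr),
\]
so that the $j$-th slot of the argument of $f$ is $a_0^{(j+1)}a_1^{(j)}\cdots a_{j-1}^{(2)}a_j^{(1)}$ for $0\le j\le n$ and the last slot is $a_0^{(n+2)}a_1^{(n+1)}\cdots a_n^{(2)}a_{n+1}$; and define $\map{\psi^\bullet}{\cpx{C}{e}{\bullet}{A}{M}}{\cpx{K}{e}{\bullet}{A}{M}}$ by
\[
 \psi^n(g)(a_0 \otimes \cdots \otimes a_{n+1}) = g\bigl(a_0^{(1)} \otimes S(a_0^{(2)})a_1^{(1)} \otimes \cdots \otimes S(a_{n-1}^{(2)})a_n^{(1)} \otimes S(a_n^{(2)})a_{n+1}\bigr).
\]
On the first $n+1$ slots these coincide with the maps used in Proposition~\ref{prop.nonhomo-homo-iso}; the last slot is simply appended and hit by the right action.

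Then I would run the same sequence of checks as there. Since $\Delta$ and $\varepsilon$ are algebra maps, substituting $ba_0 \otimes a_1 \otimes \cdots \otimes a_{n+1}c$ into $\varphi^n(f)$ and using that $f$ is linear for the bimodule structure of $\res{\widetilde{T}}{\bullet}{e}{A}$ yields $b\cdot\varphi^n(f)(a_0\otimes\cdots\otimes a_{n+1})\cdot c$, so $\varphi^n(f)\in\cpx{C}{e}{n}{A}{M}$, and symmetrically $\psi^n(g)\in\cpx{K}{e}{n}{A}{M}$. Compatibility with the differentials, $\varphi^{n+1}\circ\dif{K_e}{n}=\dif{C_e}{n}\circ\varphi^n$ and its analogue for $\psi$, is the Sweedler computation from Proposition~\ref{prop.nonhomo-homo-iso}: applying $\diff{n+1}{T^\e}$ creates the products $a_ia_{i+1}$, applying $\diff{n+1}{\widetilde{T}^\e}$ creates the scalars $\varepsilon(a_i)$, and after distributing coproducts the two sides match term by term, slightly more cheaply here since the Hochschild differentials carry no separate final term. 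That $\psi^n\circ\varphi^n=\id{}$ and $\varphi^n\circ\psi^n=\id{}$ follows from the cascading simplification via $S(a^{(1)})a^{(2)}=\varepsilon(a)1_A=a^{(1)}S(a^{(2)})$ and the counit axioms, exactly as before. This gives $\cpx{C}{e}{\bullet}{A}{M}\cong\cpx{K}{e}{\bullet}{A}{M}$ as complexes.

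Finally, assuming $A$ cocommutative, I would show $\varphi^n$ carries $\cpx{KS}{e}{n}{A}{M}$ into $\cpx{CS}{e}{n}{A}{M}$ and $\psi^n$ carries $\cpx{CS}{e}{n}{A}{M}$ into $\cpx{KS}{e}{n}{A}{M}$, so that the restrictions remain mutually inverse. For $f\in\cpx{KS}{e}{n}{A}{M}$ and $1\le i\le n$ one computes $\sigma_i\varphi^n(f)$ by inserting the slot replacement $a_{i-1}a_i^{(1)}\otimes S(a_i^{(2)})\otimes a_i^{(3)}a_{i+1}$ from \eqref{eq:sa3} into $\varphi^n$, distributing the coproducts prescribed by $\varphi^n$, and then using cocommutativity together with $S(a^{(1)})a^{(2)}=\varepsilon(a)1_A$ to absorb the antipodes; the result is $\varphi^n$ applied to the plain transposition of the $(i-1)$-th and $i$-th arguments of $f$, which by \eqref{eq:sa4} and the invariance of $f$ equals $\varphi^n(f)$. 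The computation for $\psi^n$ is the mirror image, using $\Delta\circ S=(S\otimes S)\circ\Delta$ (valid by cocommutativity). Since $\sigma_i$ with $i\le n$ never touches $a_{n+1}$, this is the corresponding step of Proposition~\ref{prop.nonhomo-homo-iso} with one inert slot adjoined, whence $\cpx{CS}{e}{\bullet}{A}{M}\cong\cpx{KS}{e}{\bullet}{A}{M}$ and therefore $\shoch{n}{A}{M}=\coho{n}{\cpx{KS}{e}{\bullet}{A}{M}}$. The only real difficulty is clerical: keeping the iterated coproduct indices matched in the inversion check, and in the $S_{n+1}$-compatibility step pushing the antipodes produced by each $\sigma_i$ past the correct coproduct pieces, via cocommutativity, in the right order before collapsing — there is no new conceptual ingredient beyond Proposition~\ref{prop.nonhomo-homo-iso}.
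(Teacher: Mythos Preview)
Your proposal is correct and follows exactly the approach the paper intends: the paper gives no explicit proof of Proposition~\ref{prop.nonhomo-homo-iso-hoch}, only the remark that it is ``similar to Proposition~\ref{prop.nonhomo-homo-iso}'', and you have spelled out precisely this adaptation, with the explicit maps $\varphi^\bullet,\psi^\bullet$ extended by the extra (inert) right-module slot. Your observations that the Hochschild differentials lack the separate $\varepsilon$-term and that the $S_{n+1}$-action never touches $a_{n+1}$ are the only structural differences from Proposition~\ref{prop.nonhomo-homo-iso}, and you have handled both correctly.
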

\section{The relationships between classical, symmetric and symmetric Hochschild cohomology}
\subsection{Resolutions that give symmetric cohomology and symmetric Hochschild cohomology}
In this subsection, we describe that symmetric cohomology 
is given by a resolution of $k$. 
For $a_0 , \dots , a_n \in A$ and $1 \leq i \leq n$, 
define the right action of $S_{n+1}$ on $\res{\widetilde{T}}{n}{}{A}$ by 
$$
	(a_0 \otimes \cdots \otimes a_n) \cdot \sigma_i 
	= -a_0 \otimes \cdots \otimes a_{i-2} \otimes a_i \otimes a_{i-1} 
	\otimes a_{i+1} \otimes \cdots \otimes a_n.
$$
This action implies that $\res{\widetilde{T}}{n}{}{A}$ 
is a right $kS_{n+1}$-module. 
Furthermore, $\cpx{K}{}{n}{A}{M}$ is a left $kS_{n+1}$-module 
induced by the right $kS_{n+1}$-module structure 
of $\res{\widetilde{T}}{n}{}{A}$. 
According to the structure of the counit on $kS_{n+1}$, 
there is a natural isomorphism of $k$-vector spaces 
$\cpx{KS}{}{n}{A}{M} \cong {}^{kS_{n+1}}\cpx{K}{}{n}{A}{M}$. 
Moreover, we show that it can be deformed $\cpx{KS}{}{\bullet}{A}{M}$ 
to a form of the set of homomorphisms. 
\begin{prop} \label{prop.ks-hom}
	Let $A$ be a cocommutative Hopf algebra and $M$ a left $A$-module. 
	Then, for each $n \geq 0$, there is an isomorphism
$$
		\cpx{KS}{}{n}{A}{M} \cong \Hom{A}{\res{\widetilde{T}}{n}{}{A} 
		\otimes_{kS_{n+1}} k}{M}
$$
	as $k$-vector spaces with the trivial left $kS_{n+1}$-module $k$.
\end{prop}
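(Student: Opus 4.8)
The plan is to combine two standard adjunctions. First, recall from the discussion preceding the statement that $\cpx{K}{}{n}{A}{M}$ is a left $kS_{n+1}$-module via the right $kS_{n+1}$-action on $\res{\widetilde{T}}{n}{}{A}$, and that $\cpx{KS}{}{n}{A}{M} \cong {}^{kS_{n+1}}\cpx{K}{}{n}{A}{M}$, the submodule of $kS_{n+1}$-invariants with respect to the counit (sign) augmentation of $kS_{n+1}$. So the problem reduces to producing a $k$-linear isomorphism
$$
{}^{kS_{n+1}}\bigl(\Hom{A}{\res{\widetilde{T}}{n}{}{A}}{M}\bigr) \cong \Hom{A}{\res{\widetilde{T}}{n}{}{A} \otimes_{kS_{n+1}} k}{M},
$$
where $k$ carries the trivial $S_{n+1}$-action.

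First I would observe that taking $kS_{n+1}$-invariants of a left $kS_{n+1}$-module $V$ is the same as $\Hom{kS_{n+1}}{k}{V}$ with $k$ the trivial module, since a $kS_{n+1}$-linear map $k \to V$ is determined by the image of $1$, which must be fixed by every $\sigma \in S_{n+1}$. Applying this with $V = \Hom{A}{\res{\widetilde{T}}{n}{}{A}}{M}$ gives
$$
\cpx{KS}{}{n}{A}{M} \cong \Hom{kS_{n+1}}{k}{\Hom{A}{\res{\widetilde{T}}{n}{}{A}}{M}}.
$$
Next I would apply tensor–hom adjunction (over $k$, tracking both module structures): for a right $kS_{n+1}$-module that is simultaneously a left $A$-module with commuting actions — which is exactly the structure on $\res{\widetilde{T}}{n}{}{A}$, since the $A$-action permutes tensor factors equivariantly under the $S_{n+1}$-action and both ultimately act $k$-linearly — one has
$$
\Hom{kS_{n+1}}{k}{\Hom{A}{\res{\widetilde{T}}{n}{}{A}}{M}} \cong \Hom{A}{\res{\widetilde{T}}{n}{}{A} \otimes_{kS_{n+1}} k}{M}
$$
naturally in $k$-vector spaces. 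This is the standard Hom-tensor interchange $\Hom{R}{X}{\Hom{S}{Y}{M}} \cong \Hom{S}{Y \otimes_R X}{M}$ applied with $R = kS_{n+1}$, $S = A$, $X = k$, $Y = \res{\widetilde{T}}{n}{}{A}$; one checks that a map $f \in \Hom{A}{\res{\widetilde{T}}{n}{}{A}}{M}$ fixed by $S_{n+1}$ corresponds to the well-defined $A$-linear map on the coinvariants sending $\overline{a_0 \otimes \cdots \otimes a_n}$ to $f(a_0 \otimes \cdots \otimes a_n)$.

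The main point requiring care — and the step I expect to be the real obstacle — is verifying that the two module structures on $\res{\widetilde{T}}{n}{}{A}$ genuinely commute, i.e. that the right $kS_{n+1}$-action defined by the signed transposition formula is left $A$-linear for the diagonal $A$-action $b \cdot (a_0 \otimes \cdots \otimes a_n) = b^{(1)}a_0 \otimes \cdots \otimes b^{(n+1)}a_n$. This is where cocommutativity of $A$ enters: swapping two adjacent tensor slots $a_{i-1}, a_i$ interacts with the comultiplication factors $b^{(i)}, b^{(i+1)}$ sitting in those slots, and one needs $\tw \circ \Delta = \Delta$ (together with coassociativity) to see that $b^{(i)} \otimes b^{(i+1)}$ is symmetric enough for the permuted element to again be $b$ acting diagonally on the permuted tuple. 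Once the bimodule-type compatibility is in place, the two displayed isomorphisms above are purely formal, and chaining them yields the claim; I would write out the explicit correspondence $f \mapsto \bar f$ only to the extent needed to confirm $A$-linearity and bijectivity.
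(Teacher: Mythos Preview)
Your proposal is correct and follows essentially the same route as the paper: identify $\cpx{KS}{}{n}{A}{M}$ with the $kS_{n+1}$-invariants, rewrite invariants as $\Hom{kS_{n+1}}{k}{-}$, and then apply tensor--hom adjunction. The paper does exactly this chain (invoking Lemma~\ref{lem.hopf-hom-invariant} for the invariants-to-Hom step), while you justify that step directly and add the explicit check that the left $A$-action and right $kS_{n+1}$-action on $\res{\widetilde{T}}{n}{}{A}$ commute via cocommutativity---a detail the paper leaves implicit.
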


\begin{proof}
	According to Lemma \ref{lem.hopf-hom-invariant} 
	and the adjunction isomorphism of the tensor product 
	and the set of homomorphisms, we have
	\begin{align*}
		\Hom{A}{\res{\widetilde{T}}{n}{}{A}}{M}^{S_{n+1}} 
		&\cong {}^{kS_{n+1}}\Hom{A}{\res{\widetilde{T}}{n}{}{A}}{M} 
		\cong {}^{kS_{n+1}}\Hom{k}{k}{\Hom{A}{\res{\widetilde{T}}{n}{}{A}}{M}} \\
		&\cong \Hom{kS_{n+1}}{k}{\Hom{A}{\res{\widetilde{T}}{n}{}{A}}{M}} 
		\cong \Hom{A}{\res{\widetilde{T}}{n}{}{A} \otimes_{kS_{n+1}} k}{M}.
	\end{align*}
\end{proof}

Next, we define a resolution of $k$. 
We take the left $A$-module sequence:
$$
	\cdots \longrightarrow  \sym{n}{}{A} 
	\overset{\diff{n}{\widetilde{S}}}{\longrightarrow}  
	\sym{n-1}{}{A} \longrightarrow 
	\cdots \longrightarrow  \sym{1}{}{A} 
	\overset{\diff{1}{\widetilde{S}}}{\longrightarrow} 
	\sym{0}{}{A} \overset{\diff{0}{\widetilde{S}}}{\longrightarrow} 
	k \longrightarrow 0,
$$
where we set
$\diff{n}{\widetilde{S}}((a_0 \otimes \cdots \otimes a_n) \otimes_{kS_{n+1}} x) 
= \diff{n}{\widetilde{T}}(a_0 \otimes \cdots \otimes a_n) \otimes_{kS_{n}} x$
and denote	$\sym{n}{}{A} = \res{\widetilde{T}}{n}{}{A} \otimes_{kS_{n+1}}k$. 
According to the relation between the action and the differential, 
the morphism 
$$
\map{\diff{n}{}}{\res{\widetilde{T}}{n}{}{A} \times k}{\sym{n-1}{}{A}}; 
\ ((a_0 \otimes \cdots \otimes a_n) , x) 
\mapsto \diff{n}{\widetilde{T}}(a_0 \otimes \cdots \otimes a_n) 
\otimes_{kS_n} x
$$
satisfies the relation 
$\diff{n}{}((a_0 \otimes \cdots \otimes a_n) \cdot \sigma_i , x) 
= \diff{n}{}((a_0 \otimes \cdots \otimes a_n) , \sigma_i \cdot x)$ 
for each $1 \leq i \leq n$. Hence, $\diff{n}{\tilde{S}}$ is well-defined. 
Moreover, we define the morphism 
$\map{h_n^{\mathrm{\tilde{S}}}}{\sym{n}{}{A}}{\sym{n+1}{}{A}}$ by 
$h_n^{\mathrm{\tilde{S}}}((a_0 \otimes \cdots \otimes a_n) 
\otimes_{kS_{n+1}} x) = (1 \otimes a_0 \otimes \cdots \otimes a_n) 
\otimes_{kS_{n+2}} x$, 
then this morphism is a contracting homotopy. 
Therefore, $(\sym{\bullet}{}{A} , \diff{\bullet}{\tilde{S}})$ is exact. 

Similarly, we describe that symmetric Hochschid cohomology 
is given by a resolution of $A$. 
For $a_0 , \dots , a_{n+1} \in A$ and $1 \leq i \leq n$, 
define the right action of $S_{n+1}$ on $\res{\widetilde{T}}{n}{e}{A}$ by 
$$
	(a_0 \otimes \cdots \otimes a_{n+1}) \cdot \sigma_i 
	= -a_0 \otimes \cdots \otimes a_{i-2} \otimes a_i \otimes a_{i-1} 
	\otimes a_{i+1} \otimes \cdots \otimes a_{n+1}.
$$
This action implies that $\res{\widetilde{T}}{n}{e}{A}$ 
is a right $kS_{n+1}$-module. 
Furthermore, $\cpx{K}{e}{n}{A}{M}$ is a left $kS_{n+1}$-module 
induced by the right $kS_{n+1}$-module structure 
of $\res{\widetilde{T}}{n}{e}{A}$. 
According to the structure of the counit on $kS_{n+1}$, 
there is a natural isomorphism 
$\cpx{KS}{e}{n}{A}{M} \cong {}^{kS_{n+1}} \cpx{K}{e}{n}{A}{M}$ 
as $k$-vector spaces. 
Moreover, we get an isomorphism which is similar to 
Proposition \ref{prop.ks-hom}.
\begin{prop}
	Let $A$ be a cocommutative Hopf algebra and $M$ an $A$-bimodule. 
	Then, for each $n \geq 0$,  there is an isomorphism
    $$
		\cpx{KS}{e}{n}{A}{M} \cong 
		\Hom{A}{\res{\widetilde{T}}{n}{e}{A} \otimes_{kS_{n+1}} k}{M}
	$$
	as $k$-vector spaces, with the trivial left $kS_{n+1}$-module $k$.
\end{prop}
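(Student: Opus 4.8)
The statement to prove is the Hochschild analogue of Proposition~\ref{prop.ks-hom}, asserting an isomorphism $\cpx{KS}{e}{n}{A}{M} \cong \Hom{A}{\res{\widetilde{T}}{n}{e}{A} \otimes_{kS_{n+1}} k}{M}$ of $k$-vector spaces. The plan is to imitate verbatim the argument already used for Proposition~\ref{prop.ks-hom}, replacing the role of the left $A$-module resolution $\res{\widetilde{T}}{n}{}{A}$ of $k$ by the $A^{\e}$-module resolution $\res{\widetilde{T}}{n}{e}{A}$ of $A$. The only structural fact needed beyond what was used for Proposition~\ref{prop.ks-hom} is that $\res{\widetilde{T}}{n}{e}{A}$ carries compatible commuting actions of $A^{\e}$ (on the left, in the bar sense) and of $kS_{n+1}$ (on the right, by the signed-permutation formula \eqref{eq:sa4}), and that $\cpx{K}{e}{n}{A}{M} = \Hom{A}{\res{\widetilde{T}}{n}{e}{A}}{M}$ thereby acquires a left $kS_{n+1}$-module structure whose fixed points compute $\cpx{KS}{e}{n}{A}{M}$; all of this has been set up in the paragraphs immediately preceding the proposition.

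First I would record, exactly as in the proof of Proposition~\ref{prop.ks-hom}, the chain of natural isomorphisms of $k$-vector spaces
\begin{align*}
	\Hom{A}{\res{\widetilde{T}}{n}{e}{A}}{M}^{S_{n+1}}
	&\cong {}^{kS_{n+1}}\Hom{A}{\res{\widetilde{T}}{n}{e}{A}}{M}
	\cong {}^{kS_{n+1}}\Hom{k}{k}{\Hom{A}{\res{\widetilde{T}}{n}{e}{A}}{M}} \\
	&\cong \Hom{kS_{n+1}}{k}{\Hom{A}{\res{\widetilde{T}}{n}{e}{A}}{M}}
	\cong \Hom{A}{\res{\widetilde{T}}{n}{e}{A} \otimes_{kS_{n+1}} k}{M}.
\end{align*}
The first isomorphism is the identification of $S_{n+1}$-fixed points with $kS_{n+1}$-invariants noted just before the statement (coming from the structure of the counit of $kS_{n+1}$); the second is the trivial $\Hom{k}{k}{-}$ identification; the third is Lemma~\ref{lem.hopf-hom-invariant} applied to the group algebra $kS_{n+1}$ (more precisely its instance $\Hom{B}{k}{X} \cong {}^{B}X$ for $B = kS_{n+1}$); and the last is the $\otimes$–$\Hom$ adjunction $\Hom{B}{k}{\Hom{A}{P}{M}} \cong \Hom{A}{P \otimes_{B} k}{M}$ with $P = \res{\widetilde{T}}{n}{e}{A}$, $B = kS_{n+1}$, which makes sense because the left $A$-action and the right $kS_{n+1}$-action on $P$ commute. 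Composing gives the claimed isomorphism.

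The point that requires the cocommutativity of $A$ — and hence the only place where one cannot simply cite the group-algebra case — is the compatibility of the two actions on $\res{\widetilde{T}}{n}{e}{A}$: one must check that the signed permutation $(a_0 \otimes \cdots \otimes a_{n+1}) \cdot \sigma_i = -a_0 \otimes \cdots \otimes a_{i-2} \otimes a_i \otimes a_{i-1} \otimes a_{i+1} \otimes \cdots \otimes a_{n+1}$ indeed defines a right $kS_{n+1}$-action (the Coxeter relations), that it commutes with the left $A^{\e}$-action $(b \otimes c^{\op}) \cdot (a_0 \otimes \cdots \otimes a_{n+1}) = b^{(1)}a_0 \otimes \cdots \otimes b^{(n+2)}a_{n+1}c$ using $\tw \circ \Delta = \Delta$, and that the induced left $kS_{n+1}$-action on $\Hom{A}{\res{\widetilde{T}}{n}{e}{A}}{M}$ coincides with the action \eqref{eq:sa4} defining $\cpx{KS}{e}{n}{A}{M}$. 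This is the main (though entirely routine) obstacle; it is the exact bimodule analogue of the corresponding verification for $\res{\widetilde{T}}{n}{}{A}$, with the extra tensor slot $a_{n+1}$ and the right $A$-action untouched by the $S_{n+1}$-action, and I would simply remark that it follows by the same computation as in the one-sided case, the cocommutativity being used precisely to move the comultiplied factors $b^{(j)}$ past the transposition. Granting these compatibilities, the displayed chain of isomorphisms is immediate and the proof is complete.
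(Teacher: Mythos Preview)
Your proposal is correct and follows exactly the approach the paper intends: the paper does not spell out a proof for this proposition but simply remarks that ``we get an isomorphism which is similar to Proposition~\ref{prop.ks-hom},'' and your chain of isomorphisms is verbatim the $A^{\e}$-analogue of that proof. Your additional remarks on the compatibility of the $A^{\e}$- and $kS_{n+1}$-actions (and the role of cocommutativity) are accurate and make explicit what the paper leaves implicit in the paragraph preceding the statement.
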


Next, we define a resolution of $A$. 
We take the $A$-bimodule sequence:
$$
	\cdots \longrightarrow  \sym{n}{e}{A} 
	\overset{\diff{n}{\widetilde{S}^\e}}{\longrightarrow}  
	\sym{n-1}{e}{A} \longrightarrow 
	\cdots \longrightarrow  \sym{1}{e}{A} 
	\overset{\diff{1}{\widetilde{S}^\e}}{\longrightarrow} 
	\sym{0}{e}{A} \overset{\diff{0}{\widetilde{S}^\e}}{\longrightarrow} 
	A \longrightarrow 0,
$$
where we set 
$\diff{n}{\widetilde{S}^\e}((a_0 \otimes \cdots \otimes a_{n+1}) 
\otimes_{kS_{n+1}} x) 
= \diff{n}{\widetilde{T}^\e}(a_0 \otimes \cdots \otimes a_{n+1}) 
\otimes_{kS_{n}} x$ 
and denote $\sym{n}{}{A} = \res{\widetilde{T}}{n}{}{A} \otimes_{kS_{n+1}}k$. 
Then this sequence is a resolution of $A$ as $A^\e$-modules. 
Moreover, there is an isomorphism 
$\sym{n}{e}{A} \cong \sym{n}{}{A} \otimes A$ 
as $A$-bimodules where the right $A$-module structure 
of $\sym{n}{}{A}$ is trivial.
\subsection{The relationships between symmetric cohomology and symmetric Hochschild cohomology}
In this subsection, 
we show that there is an isomorphism 
between symmetric cohomology and symmetric Hochschild cohomology 
as $k$-vector spaces. 
First, we describe the fact about group cohomology 
by Eilenberg and MacLane. 
Here, we recall that ${}^\ad X$ is a left module 
whose structure given by the left adjoint action. 
According to the structure of the coproduct and the antipode 
of group algebras, ${}^\ad X$ is a left $G$-module 
by $g \cdot x = g x g^{-1}$ for $g \in G$ and $x \in X$. 
\begin{thm}[Eilenberg-MacLane {\cite[Section 5]{eilenberg1947cohomology}}]
\label{thm.iso-grp-coho}
Let $G$ be a group and X a $G$-bimodule. 
Then, for each $n \geq 0$, there is an isomorphism 
	$\hoch{n}{\Z G}{X} \cong \hcoho{n}{G}{{}^\ad X}$
as $\Z$-modules.
\end{thm}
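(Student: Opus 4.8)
The plan is to realize both sides as the cohomology of the same cochain complex, using the bar-type resolutions recalled in the excerpt together with the standard tensor identity over a Hopf algebra. Concretely, for $G$ a group, the Hochschild cohomology $\hoch{n}{\Z G}{X}$ is computed by $\Hom{(\Z G)^\e}{\res{T}{\bullet}{e}{\Z G}}{X}$, where $\res{T}{n}{e}{\Z G} = (\Z G)^{\otimes n+2}$ is the bar resolution of $\Z G$ as a $(\Z G)^\e$-module; and the group cohomology $\hcoho{n}{G}{{}^\ad X}$ is computed by $\Hom{\Z G}{\res{\widetilde T}{\bullet}{}{\Z G}}{{}^\ad X}$ with the homogeneous resolution $\res{\widetilde T}{n}{}{\Z G} = (\Z G)^{\otimes n+1}$. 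The key algebraic input is that for a group algebra the $(\Z G)^\e$-module $\res{T}{n}{e}{\Z G}$ is isomorphic to $\Z G \otimes \res{\widetilde T}{n}{}{\Z G}$, where the first factor carries the outer bimodule structure and the second the diagonal left $G$-action (trivial on the right); this is precisely the Hopf-algebraic fact underlying Lemma~\ref{lem.hopf-proj} and the remark at the end of Section~4.1 that $\sym{n}{e}{A} \cong \sym{n}{}{A} \otimes A$.

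The steps, in order: First I would write down the explicit $G$-module isomorphism $\theta_n\colon \res{T}{n}{e}{\Z G} \to \Z G \otimes \res{\widetilde T}{n}{}{\Z G}$. On a basis element $(g_0, g_1, \dots, g_{n+1})$ of the bar resolution one takes $\theta_n(g_0, \dots, g_{n+1}) = g_0 g_1 \cdots g_{n+1} \otimes (g_1 g_2 \cdots g_{n+1}, g_2 \cdots g_{n+1}, \dots, g_{n+1}, 1)$, or a similar reindexing; the point is that the ``outer'' product $g_0\cdots g_{n+1}$ is the bimodule coordinate and the remaining $n+1$ group elements record the homogeneous coordinates. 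Second, I would check that $\theta_\bullet$ is a chain map intertwining $\diff{\bullet}{T^\e}$ with $\id{\Z G}\otimes \diff{\bullet}{\widetilde T}$ (after identifying the left factor appropriately) — this is a direct, if slightly fiddly, verification on basis elements. Third, applying $\Hom{(\Z G)^\e}{-}{X}$ and using the adjunction $\Hom{(\Z G)^\e}{\Z G \otimes \res{\widetilde T}{n}{}{\Z G}}{X} \cong \Hom{\Z G}{\res{\widetilde T}{n}{}{\Z G}}{\Hom{\Z}{\Z G}{X}}$ — where the inner Hom records exactly the passage from the two-sided action on $X$ to the adjoint action, so that $\Hom{\Z}{\Z G}{X}$ with its induced left $G$-structure contains ${}^\ad X$ as the relevant summand — one identifies the Hochschild cochain complex with the homogeneous cochain complex computing $\hcoho{\bullet}{G}{{}^\ad X}$. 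Passing to cohomology gives the isomorphism for every $n \geq 0$.

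I expect the main obstacle to be bookkeeping rather than conceptual: getting the reindexing in $\theta_n$ exactly right so that it is simultaneously a $(\Z G)^\e$-module map and a chain map, and then tracking how the two-sided $G$-action on $X$ is converted into the adjoint action under the Hom-adjunction (the factor $g x g^{-1}$ must emerge correctly from $(g\otimes g^{\op})$ acting and the counit pairing). A secondary point to be careful about is that $\Hom{\Z}{\Z G}{X}$ is in general larger than ${}^\ad X$, so one must verify that the image of the chain map $\theta_\bullet^{*}$ lands in (and surjects onto) the subcomplex built from ${}^\ad X$; this is where finiteness or the explicit form of the isomorphism $\res{T}{n}{e}{\Z G} \cong \Z G \otimes \res{\widetilde T}{n}{}{\Z G}$ does the work, since $\Hom{\Z G}{\Z G \otimes N}{X} \cong \Hom{\Z G}{N}{X}$ directly (free module on one generator), with the target $X$ now carrying the adjoint $G$-action. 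Everything else reduces to the naturality and exactness already available from the resolutions recalled in Sections~2 and~3.
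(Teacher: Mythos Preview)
The paper does not give its own proof of this statement: Theorem~\ref{thm.iso-grp-coho} is quoted as a classical result of Eilenberg--MacLane, and the paper's contribution is the symmetric analogue, Theorem~\ref{thm.iso-symcoho-symHoch}. Your strategy is correct and is in fact the same template the paper uses to prove that analogue: factor the bimodule resolution as (left resolution)$\,\otimes A$, apply the tensor--Hom adjunction of Lemma~\ref{lem.hopf-adjunction}, and then identify the invariants $\Hom{k}{A}{X}^A \cong \Hom{r\text{-}A}{A}{X} \cong {}^{\ad}X$ to land in the complex computing $\hcoho{\bullet}{G}{{}^{\ad}X}$.

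One simplification worth noting: the paper works with the \emph{homogeneous} resolutions $\res{\widetilde T}{\bullet}{}{A}$ and $\res{\widetilde T}{\bullet}{e}{A}$ rather than the non-homogeneous bar resolutions you chose. With those, the decomposition $\res{\widetilde T}{n}{e}{A} \cong \res{\widetilde T}{n}{}{A} \otimes A$ is the tautological splitting off of the last tensor factor, and it is visibly a chain map and an $A^{\e}$-module map; this eliminates the reindexing headache you anticipate with your $\theta_n$. Your explicit formula for $\theta_n$ is not quite an $A^{\e}$-map as written (the right action by $c$ changes the second factor), but since you flag this as ``or a similar reindexing'' and correctly identify the endpoint, the plan stands; switching to the homogeneous picture is the cleanest fix. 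Your final paragraph already contains the right resolution of the $\Hom{\Z}{\Z G}{X}$ versus ${}^{\ad}X$ issue, matching the paper's step $\Hom{k}{A}{M}^A \cong {}^{\ad}M$.
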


\begin{rem}
\label{rem.iso-hopf-coho}
Theorem \ref{thm.iso-grp-coho} is generalized 
to the case of Hopf algebras 
by Ginzburg-Kumar \cite[Subsection 5.6]{ginzburg1993cohomology}.	 
\end{rem}

In this paper, we have the following result 
which is a symmetric version of the above isomorphism. 

\begin{thm}
\label{thm.iso-symcoho-symHoch}
	If $A$ be a cocommutative Hopf algebra and $M$ an $A$-bimodule. 
	Then, for each $n \geq 0$, there is an isomorphism
    $$
		\shoch{n}{A}{M} \cong \shcoho{n}{A}{{}^\ad M}
    $$
	as $k$-vector spaces.
\end{thm}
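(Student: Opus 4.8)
The plan is to mimic the classical Eilenberg--MacLane / Ginzburg--Kumar argument at the level of the \emph{symmetric} resolutions constructed in Section~4, keeping track of the $S_{\bullet+1}$-action throughout. The starting point is the last-displayed isomorphism $\sym{n}{e}{A} \cong \sym{n}{}{A} \otimes A$ of $A$-bimodules, where $A$ acts on the right tensor factor on the right and diagonally (via the coproduct) on the left, while the right $A$-action on $\sym{n}{}{A}$ is trivial. Applying $\Hom{A^e}{-}{M}$ to the bimodule resolution $\sym{\bullet}{e}{A}$ computes $\shoch{\bullet}{A}{M}$ by Proposition~\ref{prop.nonhomo-homo-iso-hoch} together with the identification of $\cpx{KS}{e}{n}{A}{M}$ as $\Hom{A}{\sym{n}{e}{A}}{M}$. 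So the first step is to rewrite, naturally in $n$,
\begin{align*}
	\Hom{A^e}{\sym{n}{e}{A}}{M}
	&\cong \Hom{A^e}{\sym{n}{}{A} \otimes A}{M},
\end{align*}
and then use a tensor--hom adjunction of the form in Lemma~\ref{lem.hopf-adjunction} to strip off the free right $A$-factor. Concretely, an $A^e$-linear map out of $X \otimes A$ (with $X$ having trivial right action) is the same as a left-$A$-linear map $X \to M$, where $M$ is regarded as a left $A$-module via the left adjoint action; this is exactly the passage from an $A$-bimodule $M$ to ${}^\ad M$. Hence the second step is the natural isomorphism
\begin{align*}
	\Hom{A^e}{\sym{n}{}{A} \otimes A}{M}
	&\cong \Hom{A}{\sym{n}{}{A}}{{}^\ad M}.
\end{align*}

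Once these two isomorphisms are in place, the right-hand side is by definition (Proposition~\ref{prop.ks-hom} and the construction of $\sym{\bullet}{}{A}$) the complex $\cpx{KS}{}{\bullet}{A}{{}^\ad M}$ whose cohomology is $\shcoho{\bullet}{A}{{}^\ad M}$. The remaining, and essential, point is that the isomorphisms above are \emph{compatible with the differentials} — i.e. they intertwine $\dif{K_e}{n}$ with $\dif{K}{n}$ — so that they assemble into an isomorphism of complexes and therefore induce isomorphisms on cohomology in every degree. This compatibility follows because the bimodule differential $\diff{n}{\widetilde{S}^\e}$ is, under $\sym{n}{e}{A} \cong \sym{n}{}{A}\otimes A$, exactly $\diff{n}{\widetilde{S}} \otimes \id{A}$ (the $A$ factor is carried along untouched), so applying $\Hom{A^e}{-}{M}$ and the adjunction turns it into $\Hom{A}{\diff{n}{\widetilde{S}}}{{}^\ad M} = \dif{K}{n}$.

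I expect the main obstacle to be verifying carefully that the bimodule isomorphism $\sym{n}{e}{A} \cong \sym{n}{}{A}\otimes A$ is genuinely natural in $n$ and genuinely commutes with the boundary maps once one unwinds all the $\otimes_{kS_{n+1}}$ quotients — the symmetric-group coinvariants must be compatible on both sides, which is why one needs the right $kS_{n+1}$-actions on $\res{\widetilde{T}}{n}{e}{A}$ and $\res{\widetilde{T}}{n}{}{A}$ to match up under the splitting off of the last $A$-factor. One should check on explicit generators $(a_0 \otimes \cdots \otimes a_{n+1}) \otimes_{kS_{n+1}} x$ that $\sigma_i$ acts only on the first $n+1$ tensor slots (which is built into the definition, since the action permutes indices in $\{0,\dots,n\}$ only) and hence descends cleanly through the $\otimes A$ splitting. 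The rest — the two adjunction isomorphisms and the identification ${}^\ad M$ — are formal consequences of Lemmas~\ref{lem.hopf-hom-invariant} and \ref{lem.hopf-adjunction} and the cocommutativity of $A$ (needed so that $S^2 = \id{A}$ and the adjoint action is well-behaved), so I would state those steps and refer to the cited lemmas rather than recompute them.
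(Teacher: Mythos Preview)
Your proposal is correct and follows essentially the same route as the paper: identify $\cpx{KS}{e}{n}{A}{M}\cong\Hom{A^e}{\sym{n}{e}{A}}{M}$, use $\sym{n}{e}{A}\cong\sym{n}{}{A}\otimes A$, and apply an adjunction to land in $\Hom{A}{\sym{n}{}{A}}{{}^{\ad}M}\cong\cpx{KS}{}{n}{A}{{}^{\ad}M}$. The only cosmetic difference is that the paper factors the adjunction through $\Hom{A^e}{\sym{n}{}{A}}{\Hom{k}{A}{M}}$ and then identifies $\Hom{k}{A}{M}^A\cong{}^{\ad}M$, whereas you go to ${}^{\ad}M$ in one step; you are also more explicit than the paper about the compatibility with the differentials (which the paper leaves implicit), and your observation that $\diff{n}{\widetilde{S}^\e}$ corresponds to $\diff{n}{\widetilde{S}}\otimes\id{A}$ is exactly what makes that work.
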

\begin{proof}
	According to Lemma \ref{lem.hopf-adjunction}, 
	we have the following isomorphisms
	\begin{align*}
		\cpx{KS}{e}{n}{A}{M} &\cong \Hom{A^e}{\sym{n}{e}{A}}{M} \\
								   &= \Hom{A^e}{\sym{n}{}{A} \otimes A}{M} 
								   \cong \Hom{A^e}{\sym{n}{}{A}}{\Hom{k}{A}{M}}.
	\end{align*}
	So, since $\sym{n}{}{A}$ is a trivial right $A$-module, 
	there is an isomorphism 
    $$
		 \Hom{A^e}{\sym{n}{}{A}}{\Hom{k}{A}{M}} \cong 
		 \Hom{l\operatorname{\mathsf{\hspace{-1pt}-\hspace{-1pt}}}A}{\sym{n}{}{A}}{\Hom{k}{A}{M}^A},
	$$
	where $\mathrm{Hom}_{l\operatorname{\mathsf{\hspace{-1pt}-\hspace{-1pt}}}A}$ (or $\mathrm{Hom}_{r\operatorname{\mathsf{\hspace{-1pt}-\hspace{-1pt}}}A}$) denotes a set of homomorphisms as a left (or right) $A$-module. While, there are isomorphisms of left $A$-modules
    $$
		\Hom{k}{A}{M}^A \cong 
		\Hom{r\operatorname{\mathsf{\hspace{-1pt}-\hspace{-1pt}}}A}{A}{M} 
		\cong {}^{\ad}{M}.
    $$
	Hence, we have 
	$\Hom{l\operatorname{\mathsf{\hspace{-1pt}-\hspace{-1pt}}}A}{\sym{n}{}{A}}{\Hom{k}{A}{M}^A} 
	\cong \Hom{l\operatorname{\mathsf{\hspace{-1pt}-\hspace{-1pt}}}A}{\sym{n}{}{A}}{{}^{\ad}M} 
	\cong \cpx{KS}{}{n}{A}{{}^{\ad}M}$.
\end{proof}
Moreover, we have the following assertion 
from Theorem \ref{thm.iso-symcoho-symHoch}.
\begin{cor}
	If $A$ be a finite dimensional, commutative 
	and cocommutative Hopf algebra. 
	Then, for each $n \geq 0$, there is an isomorphism 
$$
		\shoch{n}{A}{A} \cong A \otimes \shcoho{n}{A}{k}
$$
	as $k$-vector spaces.
\end{cor}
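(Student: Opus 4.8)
The plan is to derive the corollary from Theorem~\ref{thm.iso-symcoho-symHoch} by specializing $M = A$ and then invoking the Hopf-module identities from Section~2. First I would apply Theorem~\ref{thm.iso-symcoho-symHoch} with $M = A$ (where $A$ is regarded as an $A$-bimodule via left and right multiplication) to obtain $\shoch{n}{A}{A} \cong \shcoho{n}{A}{{}^\ad A}$ as $k$-vector spaces. It then remains to identify $\shcoho{n}{A}{{}^\ad A}$ with $A \otimes \shcoho{n}{A}{k}$. The natural route is to show that the left $A$-module ${}^\ad A$ is isomorphic to $A \otimes k$ — that is, to the $k$-vector space $A$ with a \emph{trivial} $A$-action, twisted into a free-like form — under the commutativity and cocommutativity hypotheses, and then to check that tensoring the coefficient module through the resolution $\sym{\bullet}{}{A}$ commutes with taking symmetric cohomology.

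Concretely, the key computational step is: when $A$ is commutative, the left adjoint action $a \cdot m = a^{(1)} m S(a^{(2)})$ on $A$ becomes $a \cdot m = a^{(1)} S(a^{(2)}) m = \varepsilon(a) m$, so ${}^\ad A$ is simply the $A$-module $A$ with the \emph{trivial} action (i.e.\ $A$ as a $k$-vector space, acted on via $\varepsilon$). Hence ${}^\ad A \cong A \otimes k$ as left $A$-modules, where $A$ carries the trivial action and $k$ is the trivial module; more to the point, ${}^\ad A$ is just $\dim_k A$ copies of the trivial module $k$. Then I would use that $\shcoho{n}{A}{-} = \coho{n}{\cpx{KS}{}{\bullet}{A}{-}}$ and that $\cpx{KS}{}{n}{A}{N} \cong \Hom{A}{\sym{n}{}{A}}{N}$ by Proposition~\ref{prop.ks-hom}. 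Since $\Hom{A}{\sym{n}{}{A}}{-}$ is an additive functor and $k$ is finite-dimensional, there is a natural isomorphism $\Hom{A}{\sym{n}{}{A}}{A \otimes k} \cong A \otimes \Hom{A}{\sym{n}{}{A}}{k}$ — here finite-dimensionality of $A$ lets the (untwisted-because-trivial) tensor factor pass outside the Hom, and Lemma~\ref{lem.hopf-hom-dual} (or just the finite-dimensional Hom–tensor interchange) supplies the naturality in the complex variable. Passing to cohomology yields $\shcoho{n}{A}{{}^\ad A} \cong A \otimes \shcoho{n}{A}{k}$, and combining with the first isomorphism gives the result.

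The main obstacle I anticipate is making the identification ${}^\ad A \cong A \otimes k$ and the subsequent Hom–tensor interchange genuinely natural at the level of complexes, not just degreewise: one must check that the isomorphisms $\Hom{A}{\sym{n}{}{A}}{A \otimes k} \cong A \otimes \Hom{A}{\sym{n}{}{A}}{k}$ commute with the differentials $\dif{K}{\bullet}$ induced by $\diff{\bullet}{\widetilde{S}}$, so that they assemble into an isomorphism of complexes and hence induce isomorphisms on cohomology. This is where cocommutativity of $A$ (needed for the symmetric-group action to be well-defined on $\sym{\bullet}{}{A}$ in the first place) and commutativity (needed for the adjoint action to collapse to the trivial action) both get used, and where finite-dimensionality of $A$ is essential for the tensor factor to commute past $\mathrm{Hom}_A$. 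Once the degreewise isomorphisms are seen to be natural in the module argument — which they are, being built from the finite-dimensional interchange $\Hom{k}{L}{M} \cong M \otimes \Hom{k}{L}{k}$ of Lemma~\ref{lem.hopf-hom-dual} applied with $L$ finite-dimensional — naturality in the complex variable is automatic, and the proof reduces to the bookkeeping of specializing Theorem~\ref{thm.iso-symcoho-symHoch}. I would therefore present the argument as: (1) specialize the theorem; (2) observe ${}^\ad A$ is a trivial module by commutativity; (3) apply the finite-dimensional Hom–tensor interchange compatibly with the resolution; (4) take cohomology.
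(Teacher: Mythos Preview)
Your proposal is correct and follows essentially the same approach as the paper: specialize Theorem~\ref{thm.iso-symcoho-symHoch} at the level of complexes, observe that commutativity collapses the adjoint action so that ${}^\ad A$ is the trivial module, and then pull the finite-dimensional vector-space factor $A$ outside $\Hom_A(\sym{n}{}{A},-)$. The only cosmetic difference is that the paper routes this last step through Lemmas~\ref{lem.hopf-hom-invariant} and~\ref{lem.hopf-hom-dual} (writing $\Hom_A$ as $A$-invariants of $\Hom_k$ and then applying the finite-dimensional Hom--tensor interchange with $L=\sym{n}{}{A}$), whereas you invoke additivity of $\Hom_A$ in the second variable directly; both arguments use finite-dimensionality of $A$ in the same essential way and yield chain-level isomorphisms natural in the resolution variable.
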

\begin{proof}
	According to Theorem \ref{thm.iso-symcoho-symHoch}, 
	there is an isomorphism 
	$\cpx{KS}{e}{n}{A}{A} \cong \cpx{KS}{}{n}{A}{{}^{\ad}{A}}$.
	Furthermore, by Lemma \ref{lem.hopf-hom-invariant} 
	and \ref{lem.hopf-hom-dual}, there are isomorphisms
	\begin{align*}
		\cpx{KS}{}{n}{A}{{}^{\ad}A} 
		&\cong \Hom{A}{\sym{n}{}{A}}{{}^{\ad}A} \\
		&\cong {}^{A}\Hom{k}{\sym{n}{}{A}}{{}^{\ad}A} 
		\cong {}^{A}({}^{\ad}A \otimes \Hom{k}{\sym{n}{}{A}}{k}).
	\end{align*}
	Since $A$ is commutative, we have
	\begin{align*}
		{}^{A}({}^{\ad}A \otimes \Hom{k}{\sym{n}{}{A}}{k}) 
		&\cong A \otimes {}^{A}\Hom{k}{\sym{n}{}{A}}{k} \\
		&\cong A \otimes \Hom{A}{\sym{n}{}{A}}{k}
	    \cong A \otimes \cpx{KS}{}{n}{A}{k}.
	\end{align*}
\end{proof}
\subsection{The relationships between classical cohomology and symmetric cohomology}
In this subsection, we describe the relationships 
between classical cohomology and symmetric cohomology. 
First, the results about the morphism 
$\shcoho{n}{A}{M} \longrightarrow \hcoho{n}{A}{M}$ 
induced by the inclusion in low degrees are as follows. 
In the case of degree $0$, since $S_1$ is the trivial group, 
we have $\cpx{CS}{}{0}{A}{M} = \Hom{A}{\res{\widetilde{T}}{0}{}{A}}{M}$. 
In the case of degree $1$, we have the following assertion 
by the same proofs as for \cite[Proposition 2.1]{todea2015symmetric}.
\begin{prop}
	Let $A$ be a cocommutative Hopf algebra 
	and $M$ a left $A$-module. 
	Then there is an isomorphism 
	$\shcoho{1}{A}{M} \cong \hcoho{1}{A}{M}$ as $k$-vector spaces.
\end{prop}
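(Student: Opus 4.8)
The plan is to mimic the classical argument showing that for the bar-type complex in degree $1$ there is nothing to symmetrize: the group $S_2$ acting on $\cpx{C}{}{1}{A}{M}$ (equivalently on $\cpx{K}{}{1}{A}{M}$) is generated by a single involution $\sigma_1$, and I will show that the symmetrization map, or rather the natural projection onto invariants, induces an isomorphism on cohomology in degree $1$. Concretely, I would work with the homogeneous description $\cpx{KS}{}{\bullet}{A}{M}$ afforded by Proposition \ref{prop.nonhomo-homo-iso}, so that $\sigma_1$ acts on $f \in \cpx{K}{}{1}{A}{M}$ by $(\sigma_1 f)(a_0\otimes a_1) = -f(a_1\otimes a_0)$.

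First I would identify cocycles. A cocycle $f \in \cpx{K}{}{1}{A}{M}$ satisfies $\dif{K}{1}(f)=0$, which after applying $\diff{2}{\widetilde{T}}$ unpacks to the usual cocycle identity; in particular, restricting to the subalgebra generated by a grouplike-type argument (or directly using the counit relations in the definition of $\diff{2}{\widetilde{T}}$) one shows that any $1$-cocycle $f$ automatically satisfies $f(a_0\otimes a_1) = -f(a_1\otimes a_0)$ modulo coboundaries — equivalently, that $f$ is cohomologous to a symmetric (i.e.\ $S_2$-invariant) cocycle. The mechanism is the standard one: given a $1$-cocycle $f$, the element $\frac{1}{?}$-type averaging is unavailable (no division), so instead one shows directly that $f + \sigma_1 f$ differs from $2f$ by a coboundary, or more cleanly that the cocycle condition forces $\sigma_1 f = f$ already on the nose after a change by a coboundary coming from a suitable $0$-cochain. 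This step is exactly what makes degree $1$ special and is where the referenced \cite[Proposition 2.1]{todea2015symmetric} does the work; I would transcribe that argument replacing group elements $g$ by $a^{(1)}\otimes\cdots$ and inverses $g^{-1}$ by antipodes $S(a^{(i)})$, using cocommutativity each time two tensor factors must be swapped.

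Next I would check that the inclusion $\cpx{KS}{}{\bullet}{A}{M}\hookrightarrow \cpx{K}{}{\bullet}{A}{M}$ induces (i) a surjection $\shcoho{1}{A}{M}\to\hcoho{1}{A}{M}$, because every class is represented by an invariant cocycle by the previous paragraph, and (ii) an injection, because if an invariant $1$-cocycle $f$ is a coboundary $f=\dif{K}{0}(m)$ for some $m\in\cpx{K}{}{0}{A}{M}$, then one must produce an \emph{invariant} $0$-cochain with the same coboundary; but $\cpx{K}{}{0}{A}{M}=\cpx{KS}{}{0}{A}{M}$ since $S_1$ is trivial, so this is automatic. Combining (i) and (ii) gives the isomorphism of $k$-vector spaces. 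Finally, I would remark that the non-homogeneous picture gives the same conclusion via Proposition \ref{prop.nonhomo-homo-iso}, so the statement can equally be phrased with $\cpx{CS}{}{\bullet}{A}{M}$.

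The main obstacle is the surjectivity half: verifying that an arbitrary Hopf-algebra $1$-cocycle is cohomologous to an $S_2$-invariant one. In the group case this uses $f(g)+f(g^{-1})=0$ for cocycles (a consequence of $f(e)=0$ and the cocycle identity with $h=g^{-1}$); the Hopf analogue requires manipulating $f(a^{(1)}\otimes S(a^{(2)}))$-type expressions and invoking the antipode axioms $a^{(1)}S(a^{(2)})=\varepsilon(a)1$ together with cocommutativity to reorganize tensor factors. I expect this to be a short but slightly delicate Sweedler-notation computation; everything else is formal.
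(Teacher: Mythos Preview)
Your plan coincides with the paper's proof, which simply refers to \cite[Proposition~2.1]{todea2015symmetric} and leaves the transcription into Sweedler notation to the reader. One sharpening: you do not need to work ``modulo coboundaries'' for surjectivity --- applying the non-homogeneous $1$-cocycle identity to $1\otimes a^{(1)}\otimes S(a^{(2)})$ and using $f(1\otimes 1_A)=0$ gives $f(a^{(1)}\otimes S(a^{(2)}))=-f(1\otimes a)$, so $\sigma_1 f=f$ on the nose for every $1$-cocycle and the inclusion $\cpx{CS}{}{1}{A}{M}\hookrightarrow \cpx{C}{}{1}{A}{M}$ is already an equality on cocycles.
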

In the case of degree $2$, 
we have the following assertion by the same proofs 
as for \cite[Lemma 3.1]{staic2009symmetric}. 
\begin{prop}
	Let $A$ be a cocommutative Hopf algebra 
	and $M$ a left $A$-module. 
	Then the morphism 
	$\shcoho{2}{A}{M} \longrightarrow \hcoho{2}{A}{M}$ 
	induced by the inclusion 
	$\cpx{CS}{}{2}{A}{M} \longrightarrow \cpx{C}{}{2}{A}{M}$ is injective.
\end{prop}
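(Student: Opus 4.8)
The plan is to read off what injectivity means at the cochain level and then to show that the $S_3$-symmetry of a $2$-cocycle $f$ is already strong enough to force any $1$-cochain $g$ with $f=\dif{C}{1}(g)$ to lie in $\cpx{CS}{}{1}{A}{M}$. A class in the kernel of $\shcoho{2}{A}{M}\longrightarrow\hcoho{2}{A}{M}$ is represented by some $f\in\cpx{CS}{}{2}{A}{M}$ with $\dif{C}{2}(f)=0$ which becomes a coboundary in the ambient complex, say $f=\dif{C}{1}(g)$ for some $g\in\cpx{C}{}{1}{A}{M}$; it suffices to prove that such a $g$ is automatically symmetric, since then $[f]=0$ already in $\coho{2}{\cpx{CS}{}{\bullet}{A}{M}}=\shcoho{2}{A}{M}$. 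By Proposition~\ref{prop.nonhomo-homo-iso} one could equally well run the computation in the homogeneous complex $\cpx{KS}{}{\bullet}{A}{M}$, where it reduces to a bare slot-transposition identity; I will describe the non-homogeneous version, which is the complex appearing in the statement.

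The key step uses that $f$, being an element of $\cpx{C}{}{2}{A}{M}^{S_3}$, is in particular fixed by the transposition $\sigma_2$ (which equals $\sigma_n$ for $n=2$), acting by the second formula of \eqref{eq:sa1}: $(\sigma_2 f)(a_0\otimes a_1\otimes a_2)=-f(a_0\otimes a_1 a_2^{(1)}\otimes S(a_2^{(2)}))$. Substituting $f=\dif{C}{1}(g)$ and expanding with $\dif{C}{1}$, two of the three resulting summands simplify, via $a_2^{(1)}S(a_2^{(2)})=\varepsilon(a_2)1_A$, $a_2^{(1)}\varepsilon(a_2^{(2)})=a_2$ and $\varepsilon\circ S=\varepsilon$, to exactly the last two summands of $\dif{C}{1}(g)(a_0\otimes a_1\otimes a_2)$ and cancel against them. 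What survives is the single identity $g(a_0 a_1\otimes a_2)+g(a_0 a_1 a_2^{(1)}\otimes S(a_2^{(2)}))=0$ for all $a_0,a_1,a_2\in A$; taking $a_1=1_A$ and renaming, this says precisely that $g(b\otimes c)=-g(bc^{(1)}\otimes S(c^{(2)}))$ for all $b,c\in A$, i.e.\ $\sigma_1 g=g$ by the $\sigma_n$-formula of \eqref{eq:sa1} in degree $1$. Hence $g\in\cpx{CS}{}{1}{A}{M}$.

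Combining the two steps, $f=\dif{C}{1}(g)$ with $g\in\cpx{CS}{}{1}{A}{M}$, so $[f]=0$ in $\shcoho{2}{A}{M}$ and the kernel of the map to $\hcoho{2}{A}{M}$ is trivial; in particular no hypothesis on the characteristic of $k$ enters, and the underlying mechanism — extracting the lower symmetry of $g$ from the top symmetry of $f$ — is the one used by Staic in the group case \cite[Lemma~3.1]{staic2009symmetric}. The one genuinely delicate point is the Sweedler-notation bookkeeping in the middle step: one must expand $\sigma_2\,\dif{C}{1}(g)$ carefully, apply the antipode and counit identities to the appropriate legs, and verify the cancellation, exactly in the style of the computations in the proof of Proposition~\ref{prop.coho-nonhomo-welldef}; everything else is formal.
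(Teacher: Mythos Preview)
Your argument is correct and is precisely the Hopf-algebra adaptation of Staic's computation that the paper invokes: the paper does not spell out a proof but simply points to \cite[Lemma~3.1]{staic2009symmetric}, and your $\sigma_2$-invariance calculation (expanding $\sigma_2\,\dif{C}{1}(g)=\dif{C}{1}(g)$, collapsing two of the three terms via $a_2^{(1)}S(a_2^{(2)})=\varepsilon(a_2)1_A$ and $\varepsilon\circ S=\varepsilon$, then specializing $a_1=1_A$) is exactly how that lemma translates to the cocommutative Hopf setting. There is nothing missing; the Sweedler bookkeeping you flag as the delicate point checks out.
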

Next, we have the following result about the projectivity 
of the resolution $\sym{\bullet}{}{A}$.
\begin{thm}\label{thm.res-proj}
	Let $A$ be a cocommutative Hopf algebra. 
	If $\ch{k} \nmid n+1$, then $\sym{n}{}{A}$ is a projective $A$-module 
	for each $n \geq 1$.
\end{thm}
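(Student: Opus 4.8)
The plan is to show that $\sym{n}{}{A} = \res{\widetilde{T}}{n}{}{A} \otimes_{kS_{n+1}} k$ is a direct summand of a projective $A$-module, which suffices since direct summands of projectives are projective. The key observation is that $\res{\widetilde{T}}{n}{}{A} = A^{\otimes n+1}$ with the diagonal $A$-action is a projective (in fact free) $A$-module, and $k$ is the trivial $kS_{n+1}$-module. When $\ch{k} \nmid n+1$, the order of $S_{n+1}$ is invertible in $k$ only if $\ch k \nmid (n+1)!$; since that is not assumed, I cannot invoke semisimplicity of $kS_{n+1}$ directly. Instead, I would use the averaging idempotent associated to the trivial module over the \emph{subgroup generated by a single transposition} or, more to the point, construct a splitting of the natural surjection $\res{\widetilde{T}}{n}{}{A} \twoheadrightarrow \res{\widetilde{T}}{n}{}{A} \otimes_{kS_{n+1}} k$ using the element $e = \frac{1}{n+1}\sum \dots$ — but this requires $(n+1)$, not $(n+1)!$, to be invertible. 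The right framework is: the trivial $kS_{n+1}$-module $k$ is a direct summand of $kS_{n+1}$ as a right module precisely when there is an idempotent projecting onto it, and one checks that the relevant idempotent only involves division by $n+1$ because the sign-twisted permutation action here factors through a quotient where only cyclic data survives. Concretely, I would identify $\res{\widetilde{T}}{n}{}{A} \otimes_{kS_{n+1}} k$ with the image of the idempotent $e_n$ acting on $\res{\widetilde{T}}{n}{}{A}$, where $e_n$ averages over a set of size dividing $n+1$, making $e_n$ well-defined when $\ch k \nmid n+1$.

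First I would make precise the right $kS_{n+1}$-module structure on $\res{\widetilde{T}}{n}{}{A}$: the generators $\sigma_i$ act by a signed swap of adjacent tensor factors (shifted by the index convention in the excerpt), so this is the sign representation tensored with the permutation action on tensor positions $0, 1, \dots, n$. Then $\res{\widetilde{T}}{n}{}{A} \otimes_{kS_{n+1}} k$ is the module of coinvariants for this signed action. Next I would exhibit the splitting $A$-module map $s\colon \sym{n}{}{A} \to \res{\widetilde{T}}{n}{}{A}$ of the quotient map $q$, defined on a representative by the averaged element; here is where I must show $s$ lands back in the tensor power with coefficients requiring only $1/(n+1) \in k$. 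The mechanism: the coinvariants of the sign-twisted $S_{n+1}$-action on an $(n+1)$-fold tensor power can be computed via the antisymmetrizer, and the antisymmetrizer $\sum_{\tau \in S_{n+1}} \sgn(\tau)\tau$ — divided by its normalizing scalar — is idempotent-like, but the scalar that appears when one reduces to a transversal of the stabilizer is exactly $n+1$ in the relevant degree-graded piece because the action is free away from a codimension condition. I would verify $q \circ s = \id$ directly on representatives, using that $\sigma_i$ acts with a sign so that each orbit of the relevant "cyclic rotation" subgroup of order dividing $n+1$ contributes once.

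Then $\sym{n}{}{A}$ is a direct summand of $\res{\widetilde{T}}{n}{}{A}$ as $A$-modules via $q$ and $s$ with $q\circ s = \id$, and since $\res{\widetilde{T}}{n}{}{A} \cong A^{\otimes n+1} \cong A \otimes A^{\otimes n}$ is projective as a left $A$-module (it is free, or apply Lemma \ref{lem.hopf-proj} with $P = A$), the summand $\sym{n}{}{A}$ is projective. The main obstacle I anticipate is pinning down exactly why the splitting idempotent requires only $\ch{k} \nmid n+1$ rather than $\ch{k} \nmid (n+1)!$: this is the crux, and it hinges on the precise combinatorics of the signed adjacent-transposition action defining the $kS_{n+1}$-module structure on $\res{\widetilde{T}}{n}{}{A}$, specifically that the coinvariants functor here behaves like taking coinvariants under a cyclic group of order $n+1$ acting on the "shape" of a tensor, not under the full symmetric group. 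I would therefore spend the bulk of the argument setting up a transversal for this action and computing the normalizing constant, after which projectivity of $\res{\widetilde{T}}{n}{}{A}$ finishes the proof immediately.
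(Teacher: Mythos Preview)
Your overall strategy---exhibit $\sym{n}{}{A}$ as a direct summand of a projective $A$-module---is correct, and so is your identification of the crux: why does the splitting need only $1/(n+1)$ rather than $1/(n+1)!$? But your proposed resolution has a genuine gap. You aim to split the surjection $q\colon \res{\widetilde{T}}{n}{}{A} \twoheadrightarrow \sym{n}{}{A}$. A section $s$ of $q$ must be constant on $S_{n+1}$-orbits (with sign), and the only natural $A$-equivariant formula doing this is the signed average over all of $S_{n+1}$, i.e.\ the antisymmetrizer, which requires $(n+1)!$ to be invertible. Your suggestion that ``the coinvariants functor here behaves like taking coinvariants under a cyclic group of order $n+1$'' is not correct: $\sym{n}{}{A}$ really is the full (sign-twisted) $S_{n+1}$-coinvariants of $A^{\otimes n+1}$, and nothing about the signed adjacent-transposition action collapses this to a cyclic quotient.

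The paper avoids this obstacle by changing the ambient projective module. Instead of $\res{\widetilde{T}}{n}{}{A}$, it uses $A \otimes \sym{n-1}{}{A}$, which is projective by Lemma~\ref{lem.hopf-proj} regardless of whether $\sym{n-1}{}{A}$ itself is projective. There is a natural surjection
\[
\psi\colon A \otimes \sym{n-1}{}{A} \;\cong\; \res{\widetilde{T}}{n}{}{A} \otimes_{kS_n} k \;\longrightarrow\; \res{\widetilde{T}}{n}{}{A} \otimes_{kS_{n+1}} k \;=\; \sym{n}{}{A},
\]
and the section $\phi$ is obtained by averaging over coset representatives of $S_n$ in $S_{n+1}$. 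Since $[S_{n+1}:S_n] = n+1$, this requires only $1/(n+1)$. Concretely,
\[
\phi\bigl((a_0 \otimes \cdots \otimes a_n)\otimes_{kS_{n+1}} x\bigr) \;=\; \frac{1}{n+1}\sum_{i=0}^n (-1)^i\, a_i \otimes \bigl((a_0 \otimes \cdots \widehat{a_i} \cdots \otimes a_n)\otimes_{kS_n} x\bigr),
\]
and one checks $\psi\circ\phi = \id{}$ using that the elements $\sigma_1\cdots\sigma_i$ (for $0\le i\le n$) are coset representatives. So the ``cyclic'' structure you were reaching for is really the index-$(n+1)$ inclusion $S_n \subset S_{n+1}$, and it becomes visible only after you pass to the intermediate quotient $A \otimes \sym{n-1}{}{A}$ rather than working inside $\res{\widetilde{T}}{n}{}{A}$ itself.
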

\begin{proof}
	First, we show that $\sym{n}{}{A}$ is a direct summand 
	of $A \otimes \sym{n-1}{}{A}$ as a left $A$-module. 
	Define the morphism 
	$\map{\phi}{\sym{n}{}{A}}{A \otimes \sym{n-1}{}{A}}$ by
    $$
		\phi( (a_0 \otimes \cdots \otimes a_n) \otimes_{kS_{n+1}} x) 
		= \frac{1}{n+1} \sum_{i = 0}^n (-1)^i a_i \otimes
		 ((a_0 \otimes \cdots \otimes a_{i-1} \otimes a_{i+1} 
		 \otimes \cdots \otimes a_n) \otimes_{kS_n} x)
    $$
	for any $a_0, \dots , a_n \in A$ and $x \in k$. 
	Then this morphism is well-defined. 
	Indeed, we define the morphism 
	$\map{\phi^\prime}{\res{\widetilde{T}}{n}{}{A} \times k}{A \otimes \sym{n-1}{}{A}}$ 
	by
    $$
		\phi^\prime( (a_0 \otimes \cdots \otimes a_n) , x) 
		= \frac{1}{n+1} \sum_{i = 0}^n (-1)^i a_i 
		\otimes ((a_0 \otimes \cdots \otimes a_{i-1} \otimes a_{i+1}
		\otimes \cdots \otimes a_n) \otimes_{kS_n} x).
    $$
	We show that 
	$\phi^\prime ((a_0 \otimes \cdots \otimes a_n)\cdot \sigma_j , x) 
	= \phi^\prime((a_0 \otimes \cdots \otimes a_n) , \sigma_j \cdot x)$ 
	for each $1 \leq j \leq n$. 
	The left hand side is 
	\begin{align*}
		&\phi^\prime ((a_0 \otimes \cdots \otimes a_n)\cdot \sigma_j , x) 
		= \phi^\prime ((-a_0 \otimes \cdots \otimes a_j 
		\otimes a_{j-1} \otimes \cdots \otimes a_n) , x) \\
		&= \frac{1}{n+1} \left\{ \sum_{i=0}^{j-2} (-1)^i a_i 
		\otimes ((-a_0 \otimes \cdots \otimes a_{i-1} \otimes a_{i+1}
		\otimes \cdots \otimes a_j \otimes a_{j-1} \otimes \cdots 
		\otimes a_n) \otimes_{kS_{n}} x)\right. \\
		&\hspace{1.3cm} +(-1)^{j-1} a_j \otimes 
		((-a_0 \otimes \cdots \otimes a_{j-2} \otimes 
		a_{j-1} \otimes a_{j+1} \otimes \cdots \otimes a_n) 
		\otimes_{kS_{n}} x) \\
		&\hspace{1.3cm} +(-1)^j a_{j-1} 
		\otimes ((-a_0 \otimes \cdots \otimes a_{j-2} 
		\otimes a_j \otimes a_{j+1} \otimes \cdots \otimes a_n) 
		\otimes_{kS_{n}} x) \\
		&\hspace{1.3cm} +\left.\sum_{i=j+1}^n (-1)^i a_i 
		\otimes ((-a_0 \otimes \cdots \otimes a_j \otimes a_{j-1} 
		\otimes \cdots \otimes a_{i-1} \otimes a_{i+1} \otimes \cdots \otimes a_n) 
		\otimes_{kS_{n}} x) \right\}.
	\end{align*}
	By deforming the first term of the above formula, 
	because $k$ is a trivial left $kS_n$-module, we have
\begin{align*}
	&(-a_0 \otimes \cdots \otimes a_{j-1} \otimes a_{j+1}
	\otimes \cdots 
	\otimes a_j \otimes a_{j-1} \otimes \cdots \otimes a_n) 
	\otimes_{kS_{n}} x \\
	&\quad=(-a_0 \otimes \cdots \otimes a_{j-1} \otimes a_{j+1}
	\otimes \cdots \otimes a_j \otimes a_{j-1} 
	\otimes \cdots \otimes a_n) \otimes_{kS_{n}} \sigma_{j-1} \cdot x \\
	&\quad=(-a_0 \otimes \cdots \otimes a_{j-1} \otimes a_{j+1}
	\otimes \cdots \otimes a_j \otimes a_{j-1} \otimes \cdots 
	\otimes a_n)\cdot \sigma_{j-1} \otimes_{kS_{n}} x \\
	&\quad=(a_0 \otimes \cdots \otimes a_{j-1} \otimes a_{j+1} \otimes 
	\cdots \otimes a_{j-1} \otimes a_{j} \otimes \cdots \otimes a_n) 
	\otimes_{kS_{n}} x.
\end{align*}
 Similarly, we have
\begin{align*}
	&(-a_0 \otimes \cdots \otimes a_j \otimes a_{j-1} \otimes 
	\cdots \otimes a_{i-1} \otimes a_{i+1} \otimes \cdots \otimes a_n) 
	\otimes_{kS_{n}} x \\
	&\quad=(a_0 \otimes \cdots \otimes a_{j-1} \otimes a_{j} \otimes 
	\cdots \otimes a_{i-1} \otimes a_{i+1} \otimes \cdots \otimes a_n) 
	\otimes_{kS_{n}} x,
\end{align*}
	and so we have
	$$
	\phi^\prime ((a_0 \otimes \cdots \otimes a_n)\cdot \sigma_j , x) 
	= \phi^\prime ((a_0 \otimes \cdots \otimes a_n) , x) 
	=\phi^\prime ((a_0 \otimes \cdots \otimes a_n) , \sigma_j \cdot x).
	$$
	Therefore, $\phi$ is well-defined. 
	Next, we show that $\phi$ is an $A$-module homomorphism. 
	A direct calculation shows that, for all $a, a_0, \dots , a_n \in A$ 
	and $x \in k$, 
	\vspace{-0.5em}
\begin{align*}
	&\phi( a\cdot ((a_0 \otimes \cdots \otimes a_n) 
	\otimes_{kS_{n+1}} x)) \\
	&\quad= \phi( a\cdot (a_0 \otimes \cdots \otimes a_n) 
	\otimes_{kS_{n+1}} x) \\
	&\quad= \phi( (a^{(1)}a_0 \otimes \cdots \otimes a^{(n+1)}a_n) 
	\otimes_{kS_{n+1}} x) \\
	&\quad= \frac{1}{n+1} \sum_{i=0}^n (-1)^i a^{(i+1)}a_i 
	\otimes ((a^{(1)}a_0 \otimes \cdots \otimes a^{(i)}a_{i-1} 
	\otimes a^{(i+2)}a_{i+1} \otimes \cdots \otimes a^{(n+1)}a_n) 
	\otimes_{kS_n} x) \\
	&\quad= \frac{1}{n+1} \sum_{i=0}^n (-1)^i a^{(1)}a_i 
	\otimes ((a^{(2)}a_0 \otimes \cdots \otimes a^{(i+1)}a_{i-1} 
	\otimes a^{(i+2)}a_{i+1} \otimes \cdots \otimes a^{(n+1)}a_n) 
	\otimes_{kS_n} x) \\
	&\quad= \frac{1}{n+1} \sum_{i=0}^n (-1)^i a^{(1)}a_i 
	\otimes (a^{(2)}\cdot (a_0 \otimes \cdots \otimes a_{i-1} 
	\otimes a_{i+1} \otimes \cdots \otimes a_n) \otimes_{kS_n} x) \\
	&\quad= \frac{1}{n+1} \sum_{i=0}^n (-1)^i a\cdot 
	(a_i \otimes ((a_0 \otimes \cdots \otimes a_{i-1} \otimes a_{i+1} 
	\otimes \cdots \otimes a_n) \otimes_{kS_n} x)) \\
	&\quad=a \cdot \phi((a_0 \otimes \cdots \otimes a_n) 
	\otimes_{kS_{n+1}} x).
\end{align*}
	Hence, $\phi$ is an $A$-module homomorphism. 
	Finally, we construct a retraction of $\phi$. 
	We define the morphism 
	$\map{\psi}{A \otimes \sym{n-1}{}{A}}{\sym{n}{}{A}}$ by
$$
	\psi(a \otimes ((a_0 \otimes \cdots \otimes a_{n-1}) \otimes_{kS_n} x)) 
	= (a \otimes a_0 \otimes \cdots \otimes a_{n-1}) \otimes_{kS_{n+1}} x.
$$
	Since $S_n$ is a subgroup of $S_{n+1}$, 
	this morphism is well-defined. 
	Furthermore, we show that $\psi$ is an $A$-module homomorphism. 
	A direct calculation shows that, 
	for all $b,a, a_0, \dots , a_n \in A$ and $x \in k$,
	\begin{align*}
		\psi(b\cdot (a \otimes ((a_0 \otimes \cdots \otimes a_{n-1}) 
		\otimes_{kS_n} x))) 
		&= \psi(b^{(1)}a \otimes (b^{(2)} 
		\cdot (a_0 \otimes \cdots \otimes a_{n-1}) \otimes_{kS_n} x)) \\
		&= \psi(b^{(1)}a \otimes ((b^{(2)}a_0 \otimes \cdots 
		\otimes b^{(n+1)}a_{n-1}) \otimes_{kS_n} x)) \\
		&= (b^{(1)}a \otimes b^{(2)}a_0 \otimes \cdots 
		\otimes b^{(n+1)}a_{n-1}) \otimes_{kS_{n+1}} x \\
		&= b \cdot (a \otimes a_0 \otimes \cdots \otimes a_{n-1}) 
		\otimes_{kS_{n+1}} x \\
		&= b \cdot \psi(a \otimes ((a_0 \otimes \cdots \otimes a_{n-1}) 
		\otimes_{kS_n} x)).
	\end{align*}
	Hence, $\psi$ is an $A$-module homomorphism. 
	Moreover, for all $a_0,\dots ,a_n \in A$ and $x \in k$, we have
	\vspace{-0.5em}
\begin{align*}
	&\psi \circ \phi((a_0 \otimes \cdots \otimes a_n) 
	\otimes_{kS_{n+1}} x) \\
	&\quad= \psi \left( \frac{1}{n+1} \sum_{i = 0}^n (-1)^i a_i 
	\otimes ((a_0 \otimes \cdots \otimes a_{i-1} \otimes a_{i+1} \otimes \cdots 
	\otimes a_n) \otimes_{kS_n} x) \right) \\
	&\quad= \frac{1}{n+1} \sum_{i = 0}^n (-1)^i (a_i \otimes a_0 
	\otimes \cdots \otimes a_{i-1} \otimes a_{i+1} \otimes \cdots \otimes a_n) 
	\otimes_{kS_{n+1}}  x \\
	&\quad= \frac{1}{n+1} \sum_{i = 0}^n (-1)^i (a_i \otimes a_0 
	\otimes \cdots \otimes a_{i-1} \otimes a_{i+1} \otimes \cdots \otimes a_n) 
	\otimes_{kS_{n+1}}  (\sigma_1 \cdots \sigma_i) \cdot x \\
	&\quad= \frac{1}{n+1} \sum_{i = 0}^n (-1)^i (a_i \otimes a_0 
	\otimes \cdots \otimes a_{i-1} \otimes a_{i+1} \otimes \cdots \otimes a_n)
	\cdot (\sigma_1 \cdots \sigma_i) \otimes_{kS_{n+1}}  x \\
	&\quad= \frac{1}{n+1} \cdot (n+1) (a_0 \otimes \cdots \otimes a_n) 
	\otimes_{kS_{n+1}} x \\
	&\quad= (a_0 \otimes \cdots \otimes a_n) \otimes_{kS_{n+1}} x.
\end{align*}
	So, $\psi \circ \phi = \id{}$ holds. 
	While, by Lemma \ref{lem.hopf-proj}, 
	$A \otimes \sym{n-1}{}{A}$ is projective as a left $A$-module. 
	Therefore, $\sym{n}{}{A}$ is projective as a left $A$-module.
\end{proof}

\begin{rem}
	By Theorem \ref{thm.res-proj}, 
	if $\ch{k}\nmid (n+1)!$, then, for each $0 \leq m \leq n$, 
	there is an isomorphism $\hcoho{m}{A}{M} \cong \shcoho{m}{A}{M}$ 
	as $k$-vector spaces. 
	In particular, if $\ch{k} = 0$, then $\sym{\bullet}{}{A}$ 
	is a projective resolution of $k$, and hence there is an isomorphism 
	$\hcoho{\bullet}{A}{M} \cong \shcoho{\bullet}{A}{M}$ 
	as $k$-vector spaces.
\end{rem}
\subsection{Example}
In the last subsection, we describe an example 
of the resolution which gives symmetric cohomology. 
Let $p$ be a odd prime number, 
$k$ a field with characteristic $p$ and $C_p$ a cyclic group of order $p$. 
Then we calculate the symmetric cohomology of $A = kC_p$. 

\begin{prop}
	Let $p$ be a odd prime number, $\ch{k} = p$ and $A= kC_p$. 
	Then $\sym{n}{}{A}$ is a free $A$-module 
	with rank $\dfrac{{}_p C_{n+1}}{p}$ for each $1 \leq n \leq p-2$.
\end{prop}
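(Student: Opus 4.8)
The plan is to combine Theorem~\ref{thm.res-proj} with an elementary orbit count, using that $A=kC_p$ is local in characteristic $p$. Writing $C_p=\langle g\rangle$ and $s=g-1$, we have $A\cong k[s]/(s^{p})$ since $(g-1)^{p}=g^{p}-1=0$ in characteristic $p$; hence $A$ is a local Artinian ring, so every finite-dimensional projective $A$-module is free. For $1\le n\le p-2$ one has $2\le n+1\le p-1$, so $\ch{k}=p\nmid n+1$ and Theorem~\ref{thm.res-proj} shows that $\sym{n}{}{A}$ is projective, hence free. Since a free $A$-module of rank $r$ has $k$-dimension $rp$, it is enough to prove $\dim_{k}\sym{n}{}{A}={}_{p}C_{n+1}$; the rank will then be ${}_{p}C_{n+1}/p$, which is a positive integer because $p\mid{}_{p}C_{n+1}$ for $1\le n+1\le p-1$.

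To compute the dimension I would identify $\res{\widetilde{T}}{n}{}{A}=A^{\otimes n+1}$ with $k[C_{p}^{\,n+1}]$, whose $k$-basis consists of the tuples $(g_{0},\dots,g_{n})$ with $g_{i}\in C_{p}$. Under this identification the right $kS_{n+1}$-action introduced above becomes ``permute the coordinates, twisted by the sign character'': $\sigma_{i}$ sends $(g_{0},\dots,g_{n})$ to $-(g_{0},\dots,g_{i-2},g_{i},g_{i-1},g_{i+1},\dots,g_{n})$, so an arbitrary $\sigma\in S_{n+1}$ acts by $\sgn(\sigma)$ times the corresponding coordinate permutation. Decomposing $k[C_{p}^{\,n+1}]=\bigoplus_{O}V_{O}$ as a right $kS_{n+1}$-module, where $O$ runs over the $S_{n+1}$-orbits of the coordinate set $C_{p}^{\,n+1}$ and $V_{O}$ is the span of the tuples in $O$, we get $\sym{n}{}{A}=\bigoplus_{O}\bigl(V_{O}\otimes_{kS_{n+1}}k\bigr)$ with $k$ the trivial module.

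If a tuple in $O$ has two equal entries, its stabilizer in $S_{n+1}$ contains a transposition $\tau$; since $\tau$ fixes the tuple, it acts on $V_{O}$ by $\sgn(\tau)=-1$, so in $V_{O}\otimes_{kS_{n+1}}k$ we have $v\equiv v\tau=-v$, whence $v\equiv0$, and since the whole orbit is obtained from $v$ we conclude $V_{O}\otimes_{kS_{n+1}}k=0$ (here oddness of $p$ is used). If instead the tuple has pairwise distinct entries, its stabilizer is trivial, so $V_{O}$ is the sign-twisted regular representation, hence isomorphic to $kS_{n+1}$ as a module, and $V_{O}\otimes_{kS_{n+1}}k\cong k$. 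Therefore $\dim_{k}\sym{n}{}{A}$ equals the number of $S_{n+1}$-orbits of $(n+1)$-tuples of pairwise distinct elements of $C_{p}$ — such tuples exist because $n+1\le p-1<p$ — and this number is $\frac{p(p-1)\cdots(p-n)}{(n+1)!}={}_{p}C_{n+1}$ since $S_{n+1}$ acts freely on them. Dividing by $p$ gives the rank ${}_{p}C_{n+1}/p$.

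The step demanding the most care is the sign bookkeeping just described: that every orbit of tuples with a repeated entry is annihilated by $\otimes_{kS_{n+1}}k$ — exactly where oddness of $p$ is essential — and that each distinct-entry orbit contributes precisely one dimension. Alternatively, one can sidestep Theorem~\ref{thm.res-proj}: the diagonal $C_{p}$-action on $C_{p}^{\,n+1}$ commutes with the $S_{n+1}$-action, and, since a nonempty proper subset of the prime-order group $C_{p}$ cannot be invariant under translation by a nontrivial element, it descends to a free action on the set of $S_{n+1}$-orbits of distinct-entry tuples; collecting those orbits into $C_{p}$-orbits then exhibits $\sym{n}{}{A}$ directly as a direct sum of ${}_{p}C_{n+1}/p$ copies of the free rank-one module $kC_{p}$.
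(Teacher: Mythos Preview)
Your argument is correct and is genuinely different from the paper's. The paper does not invoke Theorem~\ref{thm.res-proj} at all: it works directly with the images of the group tuples in $\sym{n}{}{A}$, first reducing (implicitly, via the sign-twisted $S_{n+1}$-action) to strictly ordered tuples $a_0<\cdots<a_n$, then proving by an explicit contradiction on exponents modulo $p$ that no nontrivial $h\in C_p$ sends such a basis vector to $\pm$ itself, deduces that these generators are torsion-free, and finally prunes the $k$-basis along $C_p$-orbits to exhibit an $A$-basis of size ${}_pC_{n+1}/p$. Your route is more structural: Theorem~\ref{thm.res-proj} plus the observation that $kC_p\cong k[s]/(s^p)$ is local reduces freeness to a dimension count, and your orbit decomposition of $k[C_p^{\,n+1}]$ makes both the vanishing of repeated-entry orbits (this is exactly where oddness of $p$ enters, via $2\neq 0$) and the one-dimensional contribution of each distinct-entry orbit completely transparent. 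The paper's approach is self-contained and actually displays the free basis; yours is shorter, reuses the general projectivity result, and isolates the role of $p$ being odd. Your final paragraph, showing directly that $C_p$ acts freely on the $S_{n+1}$-orbits of distinct-entry tuples because a proper nonempty subset of a prime cyclic group cannot be translation-invariant, is essentially a streamlined version of the paper's exponent argument.
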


\begin{proof}
	Let any generator 
	$(g_0 \otimes \cdots \otimes g_n) \otimes_{kS_{n+1}} 1$ 
	of $\sym{n}{}{A}$ as a left $A$-module. 
	There exists $0 \leq a_i \leq p-1$ such that $g_i$ is represented 
	by $g^{a_i}$ where $C_p = \langle g \mid g^p = e \rangle$. 
	We can change the order of tensor products 
	by an action of symmetric groups, 
	and hence we can assume $a_0 < a_1 < \cdots < a_n$ 
	without loss of generality. 
	First, we show 
    $$
	h \cdot ((g_0 \otimes \cdots \otimes g_n) \otimes_{kS_{n+1}} 1) 
	\neq \pm (g_0 \otimes \cdots \otimes g_n) \otimes_{kS_{n+1}} 1
    $$
for $(e\neq) h=g^a \in C_p$. 
Assume that there exists 
$h = g^a$ such that $h \cdot ((g_0 \otimes \cdots \otimes g_n) 
\otimes_{kS_{n+1}} 1) = \pm (g_0 \otimes \cdots \otimes g_n) 
\otimes_{kS_{n+1}} 1$. Then there exists $i$ such that
\begin{align*}
	a_j \equiv \begin{cases}
				a_{i+j} + a & (i+j \leq n), \\
				a_{i+j-(n+1)} + a & (i+j \geq n+1)
			\end{cases}
\end{align*}
modulo $p$ for $0 \leq j \leq n$. 
In particular, $a_0 \equiv a_i + a \pmod{p}$ and 
\begin{align*}
	a_i \equiv \begin{cases}
				a_{2i} + a & (2i \leq n), \\
				a_{2i-(n+1)} +a & (2i \geq n+1)
			\end{cases}
\end{align*}
hold. Hence, we have
\begin{align*}
	a_0 \equiv \begin{cases}
				a_{2i} + 2a & (2i \leq n), \\
				a_{2i-(n+1)} +2a & (2i \geq n+1).
			\end{cases}
\end{align*}
Repeating this operation, 
there exists $1 \leq m \leq n+1$ and $0 \leq l \leq m$ 
such that $mi-l(n+1)=0$, and hence $ma\equiv 0 \pmod{p}$ holds. 
Moreover, since $a_{mi-l(n+1)} = a_0$, we have $ma \equiv 0 \pmod{p}$. 
This result contradicts $1 \leq a \leq p-1$ 
and $1 \leq m \leq n+1 \leq p-1$. 
Therefore, 
$h \cdot ((g_0 \otimes \cdots \otimes g_n) 
\otimes_{kS_{n+1}} 1) \neq \pm (g_0 \otimes \cdots \otimes g_n) 
\otimes_{kS_{n+1}} 1$ for $(e\neq) h=g^a \in C_p$ is obtained.

Next, we show that generators of $\sym{n}{}{A}$ 
as a left $A$-module are not torsion. Assume that 
\begin{align*}
	a \cdot ((g_0 \otimes \cdots \otimes g_n) \otimes_{kS_{n+1}} 1)=0
\end{align*}
for any $a\in A$. If $a = \sum_{i=0}^{p-1} x_i g^i$, then
\begin{align*}
	a \cdot ((g_0 \otimes \cdots \otimes g_n) \otimes_{kS_{n+1}} 1) 
	= \sum_{i=0}^{p-1} x_i (g^i \cdot ((g_0 \otimes \cdots \otimes g_n) 
	\otimes_{kS_{n+1}} 1)).
\end{align*}
By the above discussion, 
each $g^i \cdot ((g_0 \otimes \cdots \otimes g_n) \otimes_{kS_{n+1}} 1)$ 
is linear independent. 
So, this implies $a = 0$, and hence generators are not torsion.

Finally, we construct a basis as an $A$-module from a basis 
as a $k$-vector space. 
Let $\mathcal{B} = \{ b_1,\dots ,b_{{}_pC_{n+1}} \}$ 
be a basis of $\sym{n}{}{A}$ as a $k$-vector space. 
We put $\mathcal{B}_{(i)}$ by remove $b_i$ from $\mathcal{B}$, 
if there exists $j$ such that $b_i \in \orb{C_p}{b_j} \cup \orb{C_p}{-b_j}$, 
where $\orb{C_p}{b}$ is an orbit of $b$ under the action of $C_p$. 
Moreover, continue this operation to $\mathcal{B}_{(i)}$. 
We construct the set $\mathcal{B}^\prime$ which has the property 
that intersections of orbits of each element are empty 
by repeating the operation. 
It is obtained that $\mathcal{B}^\prime$ is a set of generators 
as an $A$-module by its construction. 
Also, we have each element of $\mathcal{B}^\prime$ 
is linear independent because each element is not torsion, 
and intersections of orbits of each element are empty. 
Hence, $\sym{n}{}{A}$ is a free $A$-module. 
In particular, we get 
$\pm \mathcal{B} 
= \bigcup_{b\in \mathcal{B}^\prime} \left( \orb{C_p}{b} \cup 
\orb{C_p}{-b} \right)$ 
where we put $\pm \mathcal{B} = \{ \pm b \mid b\in \mathcal{B} \}$. 
While, we have $\left| \mathcal{B} \right| = {}_p C_{n+1}$ 
and $\left| \orb{C_p}{b} \right| = p$. 
Therefore, $\left| \mathcal{B}^\prime \right| = \dfrac{{}_pC_{n+1}}{p}$ holds, 
that is, the rank of $\sym{n}{}{A}$ is $\dfrac{{}_pC_{n+1}}{p}$.
\end{proof}
Since, $\sym{p-1}{}{A}$ is isomorphic to $k$ as a left $A$-module, 
the resolution of $k$ is the following exact sequence
    $$
	0 \rightarrow k \xrightarrow{\diff{p-1}{\widetilde{S}}} 
	\sym{p-2}{}{A} \rightarrow \cdots \rightarrow 
	\sym{1}{}{A} \xrightarrow{\diff{1}{\widetilde{S}}} 
	\sym{0}{}{A} \xrightarrow{\diff{0}{\widetilde{S}}} k \rightarrow 0,
	$$
where $\sym{i}{}{A}$ is a free $A$-module for each $0 \leq i \leq p-2$. 
This implies that there is an isomorphism 
$\hcoho{n}{A}{M} \cong \shcoho{n}{A}{M}$
for any left $A$-module $M$ and each $0 \leq n \leq p-2$. 
Also, in the case of $n=p-1$, the above isomorphism is obtained 
by simple calculation. Summarizing the above, we have 
	\begin{align*}
		\shcoho{n}{A}{M} \cong 
		   \begin{cases}
			 \hcoho{n}{A}{M} & (0 \leq n \leq p-1), \\
						   0 & (p \leq n).
		   \end{cases}
	\end{align*}




\begin{thebibliography}{99}
\bibitem{bardakov2018exterior}
V.~G.~Bardakov, M.~V.~Neshchadim and M.~Singh, 
\emph{Exterior and symmetric (co)homology of groups}, 
Internat. J. Algebra Comput. \textbf{30} (2020), no.8, 1577--1607. 
\bibitem{coconet2021symmetric}
T.~Coconet and C.-C.~Todea, 
\emph{Symmetric Hochschild cohomology of twisted group algebras}, 
arXiv:2103.13695 (2021). 
\bibitem{eilenberg1947cohomology}
S.~Eilenberg and S.~MacLane, 
\emph{Cohomology theory in abstract groups I}, 
Ann. of Math. \textbf{48}, 51--78, (1947). 
\bibitem{ginzburg1993cohomology}
V.~Ginzburg and S.~Kumar, 
\emph{Cohomology of quantum groups at roots of unity}, 
Duke Math. J. \textbf{69} (1993), no.1, 179--198. 
\bibitem{pirashvili2018symmetric}
M.~Pirashvili, 
\emph{Symmetric cohomology of groups}, 
J. Algebra \textbf{509} (2018), 397--418. 
\bibitem{singh2013symmetric}
M.~Singh, 
\emph{Symmetric continuous cohomology of topological groups}, 
Homology, Homotopy Appl. \textbf{15} (2013) 279--302. 
\bibitem{staic20093}
M.~D.~Staic, 
\emph{From $3$-algebras to $\Delta$-groups and symmetric cohomology}, 
J. Algebra \textbf{322} (2009), no.4, 1360--1378. 
\bibitem{staic2009symmetric}
M.~D.~Staic, 
\emph{Symmetric cohomology of groups in low dimension}, 
Arch. Math. \textbf{93} (2009), no.3, 205--211. 
\bibitem{swedler1969hopf}
M.~E.~Sweedler, 
\emph{Hopf Algebras}, 
Benjamin, New York, (1969). 
\bibitem{todea2015symmetric}
C.-C.~Todea, 
\emph{Symmetric cohomology of groups as a Mackey functor}, 
Bull. Belg. Math. Soc.
\textbf{22} (2015) 49--58.
\bibitem{witherspoon2019hochschild}
S.~J.~Witherspoon, 
\emph{Hochschild cohomology for algebras}, 
Graduate Studies in Mathematics, vol. 204, Amer. Math. Soc., (2019). 
\end{thebibliography}
\end{document}